\documentclass[a4paper,11pt]{amsart}
\usepackage[all]{xy}
\usepackage{latexsym}
\usepackage{amssymb}
\usepackage{amsmath}
\usepackage{mathrsfs}
\usepackage{hyperref}
\usepackage{color}

\setlength{\textwidth}{6.5in}
\setlength{\topmargin}{0in}
\setlength{\oddsidemargin}{0in}
\setlength{\evensidemargin}{0in}

\numberwithin{equation}{section} \theoremstyle{plain}
\newtheorem*{thm*}{Main Theorem}
\newtheorem{thm}{Theorem}
\newtheorem{coro}[thm]{Corollary}
\newtheorem*{coro*}{Corollary}
\newtheorem{lem}[thm]{Lemma}
\newtheorem*{lem*}{Lemma}
\newtheorem{prop}[thm]{Proposition}
\newtheorem*{prop*}{Proposition}
\newtheorem{rem}[thm]{Remark}
\newtheorem*{rem*}{Remark}
\newtheorem{exa}[thm]{Example}
\newtheorem*{exa*}{Example}
\newtheorem{df}[thm]{Definition}
\newtheorem*{df*}{Definition}
\newtheorem*{ack*}{ACKNOWLEDGEMENTS}


\newcommand{\Ext}{\mbox{\rm Ext}}
\newcommand{\extri}{\mbox{\rm extri}}

\newcommand{\Hom}{\mbox{\rm Hom}}
\newcommand{\Proj}{\mbox{\rm Proj}}
\newcommand{\proj}{\mbox{\rm proj}}
\newcommand{\Inj}{\mbox{\rm Inj}}
\newcommand{\inj}{\mbox{\rm inj}}
\newcommand{\Ab}{\mbox{\rm Ab}}
\newcommand{\Ob}{\mbox{\rm Ob}}

\newcommand{\Id}{\mbox{\rm Id}}

\newcommand{\Jac}{\mbox{\rm Jac}}
\newcommand{\Mod}{\mbox{\rm Mod}}

\newcommand{\End}{\mbox{\rm End}}
\newcommand{\im}{\mbox{\rm Im\,}}

\newcommand{\Ker}{\mbox{\rm Ker\,}}
\newcommand{\WE}{\rm WE}

\newcommand{\normal}{\lhd}
\newcommand{\projR}{{\rm proj}\mbox{-}R}
\newcommand{\Rproj}{R\mbox{-}{\rm proj}}
\newcommand{\RMod}{R\mbox{-}{\rm Mod}}
\newcommand{\Rmod}{R\mbox{-}{\rm mod}}

\newcommand{\mcA}{\mathcal A}

\newcommand{\mcE}{\mathcal E}
\newcommand{\mcI}{\mathcal I}
\newcommand{\mcJ}{\mathcal J}
\newcommand{\mcM}{\mathcal M}
\newcommand{\mcS}{\mathcal S}
\newcommand{\mcT}{\mathcal T}
\newcommand{\mcF}{\mathcal F}
\newcommand{\mcW}{\mathcal W}
\newcommand{\mcX}{\mathcal X}
\newcommand{\mcWE}{{\mathcal W}{\mathcal E}}
\newcommand{\mcWKC}{{\mathcal W}{\mathcal K}{\mathcal C}}

\makeatletter
\@namedef{subjclassname@2020}{\textup{2020} Mathematics Subject Classification}
\makeatother

\begin{document}
\footskip30pt
\date{}

\title{$0$-dimensional ideal approximation theory}

\author{H.Y.\ ZHU}
\address{School of Mathematics, Nanjing University, Nanjing, CHINA}
\email{hongyuzhu@smail.nju.edu.cn}
\author{X.H.\ FU}
\address{School of Mathematics and Statistics, Northeast Normal University, Changchun, CHINA}
\email{fuxianhui@gmail.com}
\author{I.\ HERZOG}
\address{The Ohio State University at Lima, Lima, Ohio 45804 USA}
\email{iherzog@lima.ohio-state.edu}
\author{K.\ SCHLEGEL}
\address{University of Stuttgart, Institute of Algebra and Number Theory, Pfaffenwaldring 57, 70569 Stuttgart, GERMANY}
\email{kevin.schlegel@iaz.uni-stuttgart.de}

\subjclass[2020]{18E05, 18E10, 18E40, 18G25}

\keywords{$\Hom$-special precovering ideal, complete ideal torsion pair, weak exact category, preradical}

\thanks{The second and third authors were supported by NSFC No.12071064.}

\begin{abstract}
We propose axioms for a 0-dimensional version of ideal approximation theory. We note that extriangulated categories satisfy these axioms.

\end{abstract}

\maketitle

\section{Introduction}\label{S: Intro}

The familiar notion of an ideal of a ring generalizes nicely to any additive category $\mcA$ as a subbifunctor of $\Hom_{\mcA}(-,-)$ that distinguishes a class of morphisms. As in the classical case, one has the usual lattice operations of $I + J$ and $I \cap J$ on a given pair of ideals $I$ and $J$, but one can also use composition of morphisms to define the product $IJ$. This inevitably leads to notions like the left or right annihilator of an ideal, which, in the context of an additive category, emerges as a theory of $\Hom$-orthogonality of ideals, a generalization of the classical theory of torsion pairs.

In this paper, we propose an axiomatic treatment of additive categories that supports a theory of {\em ideal torsion pairs}. It emulates Quillen's introduction of an exact category, which provides the proper axiomatic context for the theory of $\Ext$-orthogonality for ideals, centered around {\em ideal cotorsion pairs}. Just as the axioms for an exact category distinguish a collection $\mcE$ of kernel-cokernel pairs in $\mcA$ called {\em conflations,} our axioms distinguish a collection $\mcWKC$ of weak kernel-cokernel pairs called {\em weak conflations} of an additive category.

The fundamental theorem of ideal cotorsion pairs in an exact category $(\mcA ; \mcE)$ is Salce's Lemma, which states that if $(\mcI, \mcJ)$ is an ideal cotorsion pair, i.e., a maximal pair of $\Ext$-orthogonal ideals, then $\mcI$ is $\Ext$-special precovering if and only if $\mcJ$ is $\Ext$-special preenveloping (we add the prefix $\Ext$ to distinguish from the notions of $\Hom$-special approximations which will be introduced in this paper). Recall that if $\mcI$ is an ideal (and $(\mcA ; \mcE)$ has enough projective objects), then an {\em $\Ext$-special $\mcI$-precover} of an object $X$ is a morphism $i \colon T \to X$ that arises as part of a commutative diagram
\begin{equation} \begin{split} \label{Eq:special precover}
\xymatrix@R=30pt@C=30pt{
       \Omega(X) \ar[r]\ar[d]_{\Omega(j)} & P \ar[r]\ar[d]  & X \ar@{=}[d]  \\
       \Omega(F) \ar[r]                   & T \ar[r]^{i}    & X}
\end{split} \end{equation}
of conflations, where $\Ext(\mcI, \Omega(j))=0$. The ideal $\mcI$ is said to be {\em $\Ext$-special precovering} if every object has an $\Ext$-special $\mcI$-precover. This ideal generalization of Salce's classical result \cite{SL} is the origin of {\em ideal approximation theory} \cite{FGHT} and because it is about $\Ext$ it may be regarded more specifically as $1$-dimensional ideal approximation theory, and we will therefore denote by $\mcI^{\perp_{1}}$ the right $\Ext$-orthogonal ideal of $\mcI$.
Ideal approximation theory was used in~\cite[Corollary 9.4]{FH} to settle the Benson-Gnacadja Conjecture~\cite{BG} for finite group rings $kG$ by providing a bound on the nilpotency index of the phantom ideal in the stable category $kG$-$\underline{\rm Mod}.$
 Recently, Breaz and Modoi \cite{BM} introduced ideal approximation theory to a triangulated category by replacing the functor $\Ext(-,-)$ by $\Hom(-,\Sigma(-))$, where $\Sigma$ is the shift endofunctor, and Asadollahi and Sadeghi \cite{AS0} introduced a higher version of this theory into an $n$-exact category.

The axioms we propose in this paper are meant to provide a context $(\mcA; \mcWKC)$, which we call a {\em weak kernel-cokernel category,} in which to develop $0$-dimensional ideal approximation theory. An $\mcI$-precover $i \colon T \to X$ is said to be $\Hom$-special, if it is part of a weak conflation
\begin{equation} \label{Eq:special 0-precover}
\xymatrix@R=30pt@C=30pt{T \ar[r]^i & X \ar[r]^j & F}
\end{equation} in $\mcWKC$. A $\Hom$-special $\mcI$-preenvelope is defined dually and this allows us to formulate a $0$-dimensional version of Salce's Lemma. \bigskip

\noindent {\bf Theorem~\ref{T: SL}.} {\em Let $(\mcI, \mcJ)$ be an ideal torsion pair in a weak kernel-cokernel category $(\mcA; \mcWKC)$. If
$$\xymatrix@R=30pt@C=30pt{T \ar[r]^{i} & X \ar[r]^{j} & F}$$
is a weak conflation, with $i \in \mcI$ and $j \in \mcJ$, then $i \colon T \to X$ is a $\Hom$-special $\mcI$-precover if and only if $j \colon X \to F$ is a $\Hom$-special $\mcJ$-preenvelope.}
\bigskip

\noindent Ideal torsion pairs $(\mcI, \mcJ)$ for which every object has $\Hom$-special approximations are called {\em complete;} they are $0$-dimensional analogues of complete ideal cotorsion pairs.

The advantage of working with ideals of morphisms rather than subcategories of objects is that we may exploit an idea first explicitly used by Christensen \cite{C}, the existence of the product $\mcI\mcJ$ of ideals. The second and third authors \cite{FH} proved that if $\mcI$ and $\mcJ$ are $\Ext$-special preenveloping in an exact category $(\mcA ; \mcE)$ with enough projective morphisms, then so is the product $\mcI\mcJ$. Instrumental to the proof is the determination of the left $\Ext$-orthogonal ideal of $\mcI\mcJ$, which consists of extensions $j' \star i'$ of morphisms $i' \in {^{\perp_{1}}\mcI}$ and $j' \in {^{\perp_{1}}\mcJ}$. Such an extension is defined to be a composition $j' \star i' = e^1 e^2$ that arises as part of a commutative diagram
\begin{equation} \begin{split} \label{Eq:mor ext}
\xymatrix@R=30pt@C=30pt{
                              & F\ar[d]_{e^{2}}\ar[dr]^{i'} \\
      X\ar[r]^{k}\ar[dr]_{j'} & Y\ar[d]^{e^{1}}\ar[r]^(.45){c}   & Z \\
                              & T,}
\end{split} \end{equation}
where the horizontal sequence is a conflation in $\mcE$. 
Section 4 is devoted to the definition and elementary properties of a {\em weak exact category.} This is a weak kernel-cokernel category $(\mcA; \mcWE)$ in which weak inflations (resp., weak deflations) are closed under composition and the Five Lemma is taken as an axiom. {\em Weak extensions} of morphisms are then defined as in diagram (\ref{Eq:mor ext}), but with the conflation in $\mcE$ replaced by a weak conflation in $\mcWE$. Weak exact categories satisfy the following $0$-dimensional version of Christensen's Lemma.
\bigskip

\noindent {\bf Theorem~\ref{T: FH in wec}.} {\em If $(\mcI_{1}, \mcJ_{1})$ and $(\mcI_{2}, \mcJ_{2})$ are complete ideal torsion pairs in a weak exact category $(\mcA; \mcWE)$, then so are $(\mcI_{1}\mcI_{2}, {\mcJ_{2}}\diamond{\mcJ_{1}})$ and $({\mcI_{1}}\diamond{\mcI_{2}}, \mcJ_{2}\mcJ_{1})$.
}
\bigskip

\noindent All extriangulated categories (Example~\ref{E: extri cat}) are examples of weak exact categories. A $0$-dimensional version of Wakamatsu's Lemma (Theorem~\ref{P: min ob ide}) is another feature of weak exact categories.
\bigskip

Approximation theory relative subcategories (rather than ideals) of module categories originates in the work of Auslander and Smal\o~\cite{AS} and Enochs and Jenda~\cite{EJ, GT}. These treatments of the theory reflect two different schools of thought distinguished by the size, skeletally small or not, of the ambient module category. In the work of Auslander and Smal\o, the theory is developed for subcategories of the category $\Rmod$ of finitely presented modules; in that of Enochs and Jenda, for the category
$\RMod$ of all modules. The difference brings to light a foundational issue (see Remark~\ref{R:fi}) that arises in the big category $\RMod$: ideals are already proper classes, so it is not possible without making use of some form of higher type theory to discuss lattices of ideals, which is no problem in the little category $\Rmod.$ In an effort to bridge these two methodologies, we introduce in Subsection 5.2 the finiteness condition of {\em image maximality} on an ideal, which has already been implemented in the work of the fourth author \cite{Sch}.

The last section is devoted to Frobenius categories $(\mcA; \mcE),$ where the respective $0$-dimensional approximation theory of the triangulated category $\underline{\mcA}$ is related to the $1$-dimensional approximation theory of $(\mcA; \mcE)$ by the shift endofunctor $\Sigma$ as follows. \bigskip

\noindent {\bf Theorem~\ref{T: bij com in F}.} {\em Let $(\mcA; \mcE)$ be a Frobenius category. An ideal cotorsion pair $(\mcI, \mcJ)$ in $(\mcA;\mcE)$ is complete if and only if the corresponding ideal torsion pair $(\pi(\mcI), \pi(\Sigma(\mcJ)))$ in $\underline{\mcA}$ is complete.} \bigskip

\noindent  The way the inverse endofunctor $\Omega$ relates $\Ext$-special precovers in $(\mcA; \mcE)$ with the $\Hom$-special precovers in $\underline{\mcA}$ is seen by a comparison of the diagrams (\ref{Eq:special precover}) and (\ref{Eq:special 0-precover}) above.

\section{Ideal Torsion Pairs}  \label{S:itp}
In this preliminary section, we recall the notion of an ideal of an additive category. This is a generalization of the classical concept of an ideal of a ring (see Example~\ref{E:ring ideals} below) so we review the operations on such ideals, as described in Stenstr\"{o}m~\cite{S}, in the setting of an additive category, which we assume is closed under finite coproducts.

Let $\mcA$ be an additive category and denote by $\Hom\colon \mcA^{\textrm{op}}\times\mcA\to \Ab$ the additive bifunctor, which associates to every pair $(A,B)$ of objects in $\mcA$ an abelian group $\Hom(A,B)$. In order to define $\Hom$-orthogonality of morphisms below, let us elaborate on how $\Hom$ acts on morphisms. If $f \colon X \to A$ and $g \colon B \to Y$ are morphisms in $\mcA$, then the morphism $\Hom(f,g) \colon \Hom(A,B) \to \Hom(X,Y)$
of abelian groups sends an $\mcA$-morphism $x\colon A\to B$ to $gxf\colon X\to Y$. It may be factored according to the following commutative square
$$\xymatrix@R=50pt@C=40pt{
    \Hom(A,B)\ar[rr]^{\Hom(f,\,B)}\ar[rrd]^{\Hom (f,\,g)}\ar[d]_{\Hom(A,\,g)}
                                         && \Hom(X,\,B)\ar[d]^{\Hom(X,\,g)} \\
    \Hom(A,Y)\ar[rr]_{\Hom(f,\,Y)}       && \Hom(X,Y). }
$$
An {\em ideal} $\mcI$ of $\mcA$ is an additive subbifunctor $\mcI \colon \mcA^{\textrm{op}}\times\mcA\to \Ab$ of $\Hom$, that is, it associates to the pair $(A,B)$ of objects in $\mcA$ a subgroup $\mcI(A,B)\subseteq\Hom(A,B)$ preserved by $\Hom(f,g)$,
$$\Hom(f,g) \colon \mcI(A,B)\to \mcI(X,Y).$$

Let $X$ be an object in $\mcA$. We say that $X$ is an {\em object} of an ideal $\mcI$ if the identity morphism $1_{X}$ belongs to $\mcI$, that is, if $\mcI(X, X)=\Hom(X, X)$. The objects of an ideal $\mcI$ form a full additive subcategory, denoted by $\Ob(\mcI) \subseteq \mcA$. Given an additive subcategory $\mcX\subseteq\mcA$, we may associate to it the ideal $\mcI(\mcX)$ of morphisms that factor through some object in $\mcX$. An ideal $\mcI$ is called an {\em object ideal} if $\mcI=\mcI(\Ob(\mcI))$, that is, if $\mcI$ is generated by identity morphisms $1_{X}$, as $X$ ranges over the objects of $\mcI$.

Given a collection $\mcM$ of morphisms in $\mcA$, the ideal $\langle\mcM\rangle$ {\em generated} by $\mcM$ is the smallest ideal containing $\mcM$. The ideal generated by a morphism $f\colon X\to Y$ in $\mcA$ is given by
$$\langle\, f \, \rangle(A,B):=\{\, \Sigma_{s}\, g_{s}fh_{s} \,\mid\, h_{s}\colon A\to X,~ g_{s}\colon Y\to B \,\}.$$

\begin{exa}\label{E:ring ideals}\rm
Let $R$ be a ring and $\mcA=\projR$ the category of finitely generated projective right $R$-modules. If $\mcI$ is an ideal of $\mcA$, then
$\mcI(R,R) \normal \End(R)=R$ is a two-sided ideal of $R$ with
$\mcI=\langle\, \mcI(R, R) \,\rangle$. If $\mcI(R, R)$ is finitely generated as a two-sided ideal with generators $r_{1}, r_{2}, \ldots, r_{n}$, then
$\mcI=\langle\, \oplus_{s=1}^{n}\, r_{s}\colon R^{n}\to R^{n} \,\rangle$.
\end{exa}

\subsection{$\Hom$-orthogonality of morphisms}
A {\em torsion pair} $(\mcT, \mcF)$ in $\mcA$ consists of two subcategories $\mathcal{T}, \mathcal{F}\subseteq\mcA$, which is a maximal $\Hom$-orthogonal pair, that is,
$$\mathcal{T}=\{\, T\in\mcA \,\mid\, \Hom(T, \mathcal{F})=0 \,\} ~\;~\textrm{and}~\;~ \mathcal{F}=\{\, F\in\mcA \,\mid\, \Hom(\mathcal{T}, F)=0 \,\}.$$
A pair $(i,j)$ of morphisms is called {\em $\Hom$-orthogonal} if $\Hom(i,j)=0$, and a pair $(\mcI, \mcJ)$ of ideals is {\em $\Hom$-orthogonal} if every pair $(i,j)$ of morphisms, with $i\in\mcI$ and $j\in\mcJ$, is $\Hom$-orthogonal. Given a collection $\mcM$ of morphisms in $\mcA$, the {\em right $\Hom$-orthogonal ideal} of $\mcM$ is defined to be
$$\mcM^{\perp}:= \{\,j \,\mid\, \Hom(f,j)=0 ~\textrm{for~all}~ f\in\mcM \,\}.$$
The {\em left $\Hom$-orthogonal ideal} $^{\perp}\mcM$ is defined dually\footnote{A general treatment would use notation like $\mcM^{\perp_{0}}$ and $^{\perp_{0}}\mcM$ to distinguish $\Hom$-orthogonality from its higher analogues.}. It is clear that a pair $(\mcI, \mcJ)$ of ideals is $\Hom$-orthogonal if and only if $\mcI\subseteq {^{\perp}\mcJ}$ (if and only if $\mcJ\subseteq \mcI^{\perp}$).

\begin{df}\label{D: itp}
A pair $(\mcI, \mcJ)$ of ideals in $\mcA$ is an {\em ideal torsion pair} if $\mcI={^{\perp}\mcJ}$ and $\mcJ=\mcI^{\perp}$.
If $\mcM$ is a collection of morphisms, then the ideal torsion pair {\em generated} by $\mcM$ is given by $(^{\perp}({\mcM^{\perp}}), \mcM^{\perp})$; the ideal torsion pair {\em cogenerated} by $\mcM$ is defined dually.
\end{df}

For example, if $(\mcT, \mcF)$ is a torsion pair in $\mcA$, then $(\mcI(\mcT), \mcI(\mcF))$ is  an ideal torsion pair.

\subsection{Ideal annihilators}
The {\em left annihilator} of an ideal $\mcI$ of $\mcA$ is the ideal
$$\ell(\mcI):=\{\,j \,\mid\, ji=0~ \textrm{for~all}~ i\in\mcI~
\textrm{with}~(i,j)~\textrm{composable} \,\}.$$
The rule $\ell\colon \mcI\mapsto \ell(\mcI)$ is inclusion-reversing, as is the rule $r\colon \mcI\mapsto r(\mcI)$ given by the dually defined {\em right annihilator}.

\begin{thm}\label{T: orth and ann}
Let $\mcI$ be an ideal of $\mcA$. Then $\mcI^{\perp}=\ell(\mcI)$ and $^{\perp}\mcI=r(\mcI)$. A pair $(\mcI, \mcJ)$ of ideals in $\mcA$ is therefore an ideal torsion pair if and only if $\mcI=r(\mcJ)$ and $\mcJ=\ell(\mcI)$.
\end{thm}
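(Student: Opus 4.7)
The plan is to unpack both equalities from the definitions and observe that they amount to two essentially one-line verifications, one using the identity morphism $1_{A}$, and the other using the fact that $\mcI$ is closed under pre- and post-composition with arbitrary morphisms.

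First I would establish $\mcI^{\perp}\subseteq \ell(\mcI)$. Suppose $j\in \mcI^{\perp}(A,Y)$ and $i\in \mcI(X,A)$ so that the composition $ji$ is defined. By hypothesis the map of abelian groups
\[
\Hom(i,j)\colon \Hom(A,A)\longrightarrow \Hom(X,Y),\qquad x\longmapsto jxi,
\]
is zero; evaluating at $x=1_{A}$ yields $ji=j\,1_{A}\,i=0$, so $j\in \ell(\mcI)$. For the reverse inclusion $\ell(\mcI)\subseteq \mcI^{\perp}$, take $j\in \ell(\mcI)(B,Y)$ and an arbitrary $i\in \mcI(X,A)$; I need $\Hom(i,j)$ to annihilate every $x\in \Hom(A,B)$. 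Here I use that $\mcI$ is a subbifunctor of $\Hom$, so $xi=\Hom(X,x)(i)\in \mcI(X,B)$, and $(xi,j)$ is composable. The defining property of $\ell(\mcI)$ gives $j(xi)=jxi=0$, which is exactly $\Hom(i,j)(x)=0$.

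The identity $^{\perp}\mcI=r(\mcI)$ is proved by the same argument read in the opposite category, using $1_{A}$ on one side and the subbifunctor property $ix\in \mcI$ on the other. Finally the second sentence is immediate: by Definition~\ref{D: itp}, $(\mcI,\mcJ)$ is an ideal torsion pair iff $\mcI={^{\perp}\mcJ}$ and $\mcJ=\mcI^{\perp}$, and we just identified these with $r(\mcJ)$ and $\ell(\mcI)$ respectively.

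There is no real obstacle here; the only point worth being careful about is recording the correct domains and codomains when translating between the action of $\Hom(i,j)$ on Hom-groups and the pointwise condition $ji=0$, since in the first formulation $i$ and $j$ need not be composable whereas in the second they must be. That subtlety is absorbed by the subbifunctor closure $xi\in \mcI$, which is what makes the two formulations equivalent.
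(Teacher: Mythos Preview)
Your proof is correct and follows essentially the same approach as the paper's. The only difference is that you spell out the inclusion $\mcI^{\perp}\subseteq\ell(\mcI)$ via evaluation at $1_{A}$, whereas the paper simply asserts this direction and gives the detailed argument only for $\ell(\mcI)\subseteq\mcI^{\perp}$ using $xi\in\mcI$, exactly as you do.
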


\begin{proof}
To prove the first equality, we note that $\mcI^{\perp}\subseteq\ell(\mcI)$. Conversely, let $j\colon B\to Y$ be a morphism in $\ell(\mcI)$, and let $i\colon X\to A$ be a morphism in $\mcI$. Then the morphism
$$\Hom(i,j)\colon \Hom(A,B)\to\Hom(X,Y)$$
sends every $x\in\Hom(A, B)$ to $jxi$. Note that $xi\in\mcI$ since $\mcI$ is an ideal, then $j(xi)=0$. It follows that $\Hom(i, j)=0$. The second equality is just the dual.
\end{proof}

We can use Theorem~\ref{T: orth and ann} to paraphrase Definition~\ref{D: itp} and that the ideal torsion pair generated by an ideal $\mcI$ is given by $(r(\ell(\mcI)), \ell(\mcI))$.

\subsection{Operations on ideals}
Let $\mcI_{1}$ and $\mcI_{2}$ be two ideals of $\mcA$. The {\em sum} of $\mcI_{1}$ and $\mcI_{2}$ is defined for a pair $(A,B)$ of objects in $\mcA$ to be
$$(\mcI_{1}+\mcI_{2})(A,B) := \mcI_{1}(A,B)+\mcI_{2}(A,B),$$
which is the smallest ideal containing $\mcI_{1}$ and $\mcI_{2}$; the {\em intersection} $\mcI_{1}\cap\mcI_{2}$ is defined similarly, which is the largest ideal contained in $\mcI_{1}$ and $\mcI_{2}$. The following states the familiar fact that left (resp., right) annihilator ideals are closed under intersection.

\begin{coro}\label{C: orth of sum}
If $\mcI_{1}$ and $\mcI_{2}$ are ideals of $\mcA$, then $\ell(\mcI_{1}+\mcI_{2})=\ell(\mcI_{1}) \cap \ell(\mcI_{2})$, and dually, $r(\mcI_{1}+\mcI_{2})=r(\mcI_{1}) \cap r(\mcI_{2})$.
\end{coro}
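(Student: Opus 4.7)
The plan is to prove the first identity $\ell(\mcI_{1}+\mcI_{2}) = \ell(\mcI_{1}) \cap \ell(\mcI_{2})$ directly from the definition of the left annihilator, and then obtain the second by formal duality.

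For the inclusion $\ell(\mcI_{1}+\mcI_{2}) \subseteq \ell(\mcI_{1}) \cap \ell(\mcI_{2})$, I would simply invoke the fact already noted in the paper that $\ell$ is inclusion-reversing: since $\mcI_{s} \subseteq \mcI_{1} + \mcI_{2}$ for $s = 1, 2$, we immediately have $\ell(\mcI_{1}+\mcI_{2}) \subseteq \ell(\mcI_{s})$ for each $s$, and hence it lies in their intersection.

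For the reverse inclusion, I would take a morphism $j \colon B \to Y$ in $\ell(\mcI_{1}) \cap \ell(\mcI_{2})$ and a composable morphism $i \in (\mcI_{1}+\mcI_{2})(A,B)$. By definition of the sum of ideals, $i$ decomposes as $i = i_{1} + i_{2}$ with $i_{s} \in \mcI_{s}(A,B)$. Because $j \in \ell(\mcI_{s})$, we have $ji_{s} = 0$ for $s = 1, 2$, and by additivity of composition $ji = ji_{1} + ji_{2} = 0$. This shows $j \in \ell(\mcI_{1}+\mcI_{2})$.

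The dual statement $r(\mcI_{1}+\mcI_{2}) = r(\mcI_{1}) \cap r(\mcI_{2})$ follows by passing to the opposite category $\mcA^{\op}$, under which left annihilators become right annihilators and the sum of ideals is preserved. Since the argument only relies on the elementary additivity of composition and the inclusion-reversing nature of $\ell$ and $r$, there is no real obstacle; the corollary is essentially a formal consequence of Theorem~\ref{T: orth and ann} and the definition of $\mcI_{1} + \mcI_{2}$. (Alternatively, one could deduce it immediately from Theorem~\ref{T: orth and ann} together with the bifunctor identity $\Hom(f_{1}+f_{2}, j) = \Hom(f_{1},j) + \Hom(f_{2},j)$, which yields $(\mcI_{1}+\mcI_{2})^{\perp} = \mcI_{1}^{\perp} \cap \mcI_{2}^{\perp}$.)
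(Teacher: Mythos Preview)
Your proof is correct and follows essentially the same approach as the paper's: both use the inclusion-reversing property of $\ell$ for the inclusion $\ell(\mcI_{1}+\mcI_{2}) \subseteq \ell(\mcI_{1}) \cap \ell(\mcI_{2})$, while for the reverse inclusion the paper simply declares it ``clear'' and you spell out the additivity argument $ji = ji_{1} + ji_{2} = 0$. The dual statement is handled identically in both.
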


\begin{proof}
Let us check that $\ell(\mcI_{1}+\mcI_{2})=
\ell(\mcI_{1})\cap\ell(\mcI_{2})$. It is clear that  $\ell(\mcI_{1})\cap\ell(\mcI_{2})\subseteq
\ell(\mcI_{1}+\mcI_{2})$. For the converse, we note that $\mcI_{1}\subseteq\mcI_{1}+\mcI_{2}$, and so $\ell(\mcI_{1}+\mcI_{2})\subseteq \ell(\mcI_{1})$. It follows that $\ell(\mcI_{1}+\mcI_{2})\subseteq \ell(\mcI_{1})\cap \ell(\mcI_{2})$. The second equality is proved dually.
\end{proof}

Given two ideals $\mcI_{1}$ and $\mcI_{2}$ of $\mcA$, let us observe that the {\em product}
$$\mcI_{1}\mcI_{2}:= \{\, i_{1}i_{2} \,\mid\, i_{1}\in \mcI_{1},~ i_{2}\in\mcI_{2}, ~\textrm{and}~ (i_{2},i_{1})~\textrm{composable} \,\}$$
is also an ideal. It suffices to verify that the sum of parallel composable pairs $X\stackrel{i_{2}}{\longrightarrow}Y\stackrel{i_{1}}{\longrightarrow}Z$, with $i_{2}\in\mcI_{2}$ and $i_{1}\in\mcI_{1}$, and  $X\stackrel{i'_{2}}{\longrightarrow}Y'\stackrel{i'_{1}}{\longrightarrow}Z$, with
$i'_{2}\in\mcI_{2}$ and $i'_{1}\in\mcI_{1}$, also belongs to the product. But the sum $i_{1}i_{2}+i'_{1}i'_{2} \colon X \to Z$ can be viewed as a composition of
$$\xymatrix@C=55pt{ X \ar[r]^-{\left(\begin{smallmatrix}i_{2}\\i'_{2}\end{smallmatrix}\right)} & Y \oplus Y' \ar[r]^-{(i_{1},i'_{1})} & Z,
}$$
where $\left( \begin{smallmatrix}  i_2  \\  i'_2  \end{smallmatrix}  \right) =\left(  \begin{smallmatrix} 1 \\ 0  \end{smallmatrix}\right) i_2 +\left(\begin{smallmatrix}0\\1\end{smallmatrix}\right)i'_2$ belongs to $\mcI_{2}$ and
$(i_{1}, i'_{1})=i_{1}(1,0)+ i'_{1}(0,1)$ belongs to $\mcI_{1}$.

\begin{exa}\label{E: Z(p2)}\rm
If $\mcI$ is an ideal of $\mcA$, then $\ell(\mcI)\,\mcI=0$ and similarly $\mcI\,(r(\mcI))=0$. Theorem \ref{T: orth and ann} implies that one always has that $(\mcI\cap\mcJ)^{2}=0$ for an ideal torsion pair $(\mcI, \mcJ)$, but it may happen, in contrast to the classical theory of torsion pairs, that $\mcI\cap\mcJ \neq 0$. Indeed, if $p$ is a prime number, then $(\langle p \rangle, \langle p \rangle)$ is an ideal torsion pair in ${\rm proj}\mbox{-}\mathbb{Z}(p^2)$.
\end{exa}

The {\em left conductor} of an ideal $\mcJ$ into an ideal $\mcI$ is given by
$$\ell(\mcI\colon \! \mcJ) := \{\, f \,\mid\, fj\in\mcI~\textrm{for~all}~
j\in\mcJ~\textrm{with}~(j,f)~\textrm{composable} \,\}.$$
Thus $\ell(\mcI\colon \! \mcJ)$ is the maximum ideal satisfying the inclusion
$\ell(\mcI\colon \! \mcJ)\,\mcJ\subseteq\mcI$. The {\em right conductor} is defined similarly, so that $\mcI\,(r(\mcI\colon \! \mcJ)) \subseteq\mcJ$. We note that $\ell(\mcI_{1}\mcI_{2})\,\mcI_{1}\subseteq\ell(\mcI_{2})$ for two ideals $\mcI_{1}$ and $\mcI_{2}$ of $\mcA$, so that the following equalities hold.

\begin{coro}\label{C: orth of prod}
If $\mcI_{1}$ and $\mcI_{2}$ are ideals of $\mcA$, then
$\ell(\mcI_{1}\mcI_{2})=\ell(\ell(\mcI_{2})\colon\! \mcI_{1})$ while $r(\mcI_{1}\mcI_{2})=r(\mcI_{2}\colon \! r(\mcI_{1}))$.
\end{coro}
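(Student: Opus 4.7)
The plan is to verify both equalities by a direct unfolding of definitions, using only associativity of composition; this is a straightforward ideal-theoretic analogue of the classical ring identity $\ell(IJ)=\ell(\ell(J):I)$. The text immediately preceding the statement already records the inclusion $\ell(\mcI_{1}\mcI_{2})\,\mcI_{1}\subseteq\ell(\mcI_{2})$, which translates to $\ell(\mcI_{1}\mcI_{2})\subseteq\ell(\ell(\mcI_{2})\colon\!\mcI_{1})$, so the real task is to make the converse inclusion transparent and to see that both sides are in fact described by the same condition on a morphism~$f$.

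For the first equality, I would characterize membership in each side. A morphism $f$ lies in $\ell(\mcI_{1}\mcI_{2})$ precisely when $f\,(i_{1}i_{2})=0$ for every composable pair $i_{1}\in\mcI_{1}$, $i_{2}\in\mcI_{2}$ with $(i_{1}i_{2},f)$ composable. Associativity rewrites this equation as $(fi_{1})\,i_{2}=0$; since $i_{2}$ is universally quantified over $\mcI_{2}$, this is equivalent to the assertion that $fi_{1}\in\ell(\mcI_{2})$ for every $i_{1}\in\mcI_{1}$ with $(i_{1},f)$ composable. By the definition of the left conductor, the latter is exactly the statement that $f\in\ell(\ell(\mcI_{2})\colon\!\mcI_{1})$. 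Hence the two sides of the first equality describe the same morphisms.

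The second equality $r(\mcI_{1}\mcI_{2})=r(\mcI_{2}\colon\!r(\mcI_{1}))$ is formally dual: $f\in r(\mcI_{1}\mcI_{2})$ iff $(i_{1}i_{2})\,f=0$ for all composable $i_{1},i_{2}$, iff $i_{1}\,(i_{2}f)=0$ by associativity, iff $i_{2}f\in r(\mcI_{1})$ for every $i_{2}\in\mcI_{2}$ with $(i_{2},f)$ composable, and by the convention $\mcI\,(r(\mcI\colon\!\mcJ))\subseteq\mcJ$ this is precisely $f\in r(\mcI_{2}\colon\!r(\mcI_{1}))$.

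The only delicate point is bookkeeping: one must track composability hypotheses throughout and, more importantly, correctly parse the asymmetric notational convention for the two conductors ($\ell(\mcI\colon\!\mcJ)\,\mcJ\subseteq\mcI$ versus $\mcI\,(r(\mcI\colon\!\mcJ))\subseteq\mcJ$), lest one confuse which argument is the ``target'' ideal and which is the ``factored-out'' ideal. Once this bookkeeping is in place, no further ingredient is needed beyond associativity of composition, and the proof is essentially a single rewriting step on each side.
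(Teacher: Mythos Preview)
Your proof is correct and follows the same route as the paper, which treats the corollary as essentially self-evident: the paper's entire argument is the one-line observation $\ell(\mcI_{1}\mcI_{2})\,\mcI_{1}\subseteq\ell(\mcI_{2})$ preceding the statement, with the remaining inclusion and the dual equality left implicit. Your elementwise unfolding via associativity is precisely the intended verification, just written out in full.
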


\begin{rem} \rm \label{R:fi}
A foundational issue arises when we attempt to generalize the ideal lattice of a ring to the setting of an additive category. While the ring may be regarded as a preadditive category with one object, the additive category $\mcA$ need not even be skeletally small. In that case, an ideal $\mcI \colon \mcA^{\textrm{op}} \times \mcA \to \Ab$ is a {\em class} function. It therefore becomes problematic, without the use of {\em higher-order types,} to index a collection of ideals or to consider the lattice of all ideals of an additive category.
\end{rem}

\section{Complete Ideal Torsion Pairs}

In this section, we develop the theory of complete ideal torsion pairs in the setting of an additive category $\mcA$ equipped with a weak kernel-cokernel structure.

\subsection{Weak kernels and cokernels} Given a morphism $g\colon Y\to Z$ in $\mcA$, a {\em weak kernel} of $g$ is a morphism $k\colon X\to Y$ satisfying (1) $gk=0$; and (2) if $k'\colon X'\to Y$ is a morphism for which $gk'=0$, then $k'$ factors as indicated by the commutative diagram
$$\xymatrix@R=30pt@C=30pt{
                    & X' \ar[d]_{k'}\ar[rd]^{0}\ar@{-->}[ld] \\
      X \ar[r]^{k}  & Y \ar[r]^(.45){g}    & Z.}
$$
A {\em kernel} of $g$ is a weak kernel for which the factorization is unique. {\em Weak cokernels} and {\em cokernels} are defined dually.

\begin{exa}\label{E: weak kernels}\rm
The category $\projR$ has weak kernels if and only if the ring $R$ is right coherent. This is equivalent, via the duality $\Hom_{R}(-, R)\colon (\projR)^{\textrm{op}}\to \Rproj$, to the condition that the category $\Rproj$ has weak cokernels.
\end{exa}

\begin{exa} \label{E: weak ker ab} \rm
In an abelian category, every morphism $f \colon Y \to Z$ has a kernel $k \colon K \to Y$, so if $k' \colon K' \to Y$ is a weak kernel, then the uniqueness property of the kernel implies that the composition of vertical arrows on the left in the commutative diagram
$$\xymatrix@R=30pt@C=30pt{
 K \ar[r]^k \ar[d]^s     & Y \ar[r]^f \ar@{=}[d] & Z \ar@{=}[d]  \\
 K' \ar[r]^{k'} \ar[d]^p & Y \ar[r]^f \ar@{=}[d] & Z \ar@{=}[d] \\
 K \ar[r]^k              & Y \ar[r]^f            & Z}
$$
is given by $1_K = ps$. Conversely, any morphism $g \colon L \to Y$ such that $\im g = \Ker f = K$ and for which there exists a section $s \colon K \to L$, $gs = k$, is a weak kernel of $f$.
\end{exa}

The characterization of $\ell(\mcI_{1}\mcI_{2})$ given in Corollary \ref{C: orth of prod} serves to verify the following.

\begin{thm}\label{T: cond element}
Let $\mcI_{1}$ and $\mcI_{2}$ be two ideals of $\mcA$ and suppose there exists a commutative diagram
$$\xymatrix@R=30pt@C=30pt{
                                   & F_{1} \ar[d]_{a^{2}}\ar[dr]^{j_{1}}\\
       X \ar[r]^k  \ar[dr]_{j_{2}} & Y \ar[d]^{a^{1}}\ar[r]^(.45){g}        & Z \\
                                   & F_{2},}
$$
where $k$ is a weak kernel of $g$, $j_{1}\in\ell(\mcI_{1})$, and $j_{2}\in\ell(\mcI_{2})$. Then $a=a^{1}a^{2}\in\ell(\mcI_{1}\mcI_{2})$.
\end{thm}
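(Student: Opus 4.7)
The plan is to use the reformulation of $\ell(\mcI_{1}\mcI_{2})$ supplied by Corollary~\ref{C: orth of prod}, namely $\ell(\mcI_{1}\mcI_{2}) = \ell(\ell(\mcI_{2}) \colon\! \mcI_{1})$. Thus it suffices to show that for every $i_{1} \in \mcI_{1}$ composable with $a$, the composite $a i_{1}$ lies in $\ell(\mcI_{2})$. Fixing such an $i_{1} \colon V \to F_{1}$, the task reduces to verifying that $a i_{1} \circ i_{2} = 0$ for every $i_{2} \in \mcI_{2}$ composable with $ai_{1}$ on the right.

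The key observation is that $g \circ (a^{2} i_{1}) = (g a^{2}) i_{1} = j_{1} i_{1} = 0$, because $j_{1} \in \ell(\mcI_{1})$ and $i_{1} \in \mcI_{1}$. Since $k$ is a weak kernel of $g$, the morphism $a^{2} i_{1} \colon V \to Y$ therefore factors through $k$, so there exists $h \colon V \to X$ with $a^{2} i_{1} = k h$. Substituting into $a = a^{1} a^{2}$ gives
$$a i_{1} \;=\; a^{1} a^{2} i_{1} \;=\; a^{1} k h \;=\; j_{2} h,$$
since $a^{1} k = j_{2}$ by the commutativity of the given diagram.

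To conclude, let $i_{2} \in \mcI_{2}$ be composable with $a i_{1}$. Because $\mcI_{2}$ is an ideal and ideals are closed under pre-composition, $h i_{2} \in \mcI_{2}$; and since $j_{2} \in \ell(\mcI_{2})$, we obtain $a i_{1} \circ i_{2} = j_{2} (h i_{2}) = 0$. This shows $a i_{1} \in \ell(\mcI_{2})$ for every $i_{1} \in \mcI_{1}$, so $a \in \ell(\ell(\mcI_{2}) \colon\! \mcI_{1}) = \ell(\mcI_{1}\mcI_{2})$, as desired.

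No step presents a serious obstacle; the only non-formal move is the single application of the weak kernel property of $k$ to lift $a^{2} i_{1}$ through $k$. The remainder is just careful bookkeeping of the composability assumptions and two uses of the fact that $\mcI_{1}$ and $\mcI_{2}$ are closed under composition with arbitrary $\mcA$-morphisms, together with the defining orthogonality of $j_{1}$ and $j_{2}$ against $\mcI_{1}$ and $\mcI_{2}$, respectively.
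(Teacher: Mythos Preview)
Your proof is correct and follows essentially the same route as the paper's: both invoke Corollary~\ref{C: orth of prod} to reduce to showing $a i_{1} \in \ell(\mcI_{2})$ for $i_{1} \in \mcI_{1}$, use $j_{1} \in \ell(\mcI_{1})$ to get $g a^{2} i_{1} = 0$, and then apply the weak kernel property of $k$ to factor $a^{2} i_{1}$ through $k$, whence $a i_{1}$ factors through $j_{2} \in \ell(\mcI_{2})$. The only difference is cosmetic: the paper stops once $a i_{1}$ factors through $j_{2}$ (since $\ell(\mcI_{2})$ is an ideal), while you unpack the definition of $\ell(\mcI_{2})$ by testing against an arbitrary $i_{2} \in \mcI_{2}$.
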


\begin{proof}
Since $\ell(\mcI_{1}\mcI_{2})=\ell(\ell(\mcI_{2})\colon\! \mcI_{1})$, it suffices to show that if $h\colon H\to F_{1}$ belongs to $\mcI_{1}$, then $ah=a^{1}a^{2}h\in\ell(\mcI_{2})$. But $ga^{2}h=j_{1}h=0$ so that $a^{2}h$ factors through the weak kernel of $g$,
$$\xymatrix@R=30pt@C=40pt{
                                 & H \ar[d]_(.55){h} \ar[rdd]^0 \ar@{-->}[ldd] \\
                                 & F_{1}\ar[d]_{a^{2}} \ar[dr]^(.4){j_{1}}\\
       X \ar[r]^k  \ar[dr]_{j_2} & Y \ar[d]^{a^{1}} \ar[r]^(.45){g}   &    Z \\
                                 & F_{2},}
$$
and $ah$ factors through $j_{2}$, as required.
\end{proof}

A ({\em weak}) {\em kernel-cokernel pair} in $\mcA$ is a composable pair $(k,c)$ of morphisms
$$\xymatrix@R=30pt@C=30pt{
    X\ar[r]^{k}  & Y\ar[r]^{c}  & Z, }
$$
such that $c$ is a (weak) cokernel of $k$ and $k$ is a (weak) kernel of $c$. The weak kernel-cokernel pairs of an additive category $\mcA$ themselves form an additive category $\mcWKC (\mcA)$ whose morphisms are given by triples of arrows $(x,y,z)$ for which  the diagram
$$\xymatrix@R=30pt@C=30pt{
    X \ar[r]^{k} \ar[d]^x & Y \ar[r]^{c} \ar[d]^y  & Z \ar[d]^z \\
    X' \ar[r]^{k'}        & Y' \ar[r]^{c'}         & Z'
}$$
commutes.

\subsection{Weak kernel-cokernel structures}
Let us now consider an additive category $\mcA$ together with a distinguished collection $\mcWKC \subseteq \mcWKC (\mcA)$ of weak kernel-cokernel pairs
$$\xymatrix@R=30pt@C=30pt{
    X\ar[r]^{k}  & Y\ar[r]^{c}  & Z, }
$$
called {\em weak conflations}. A morphism $k$ is called a {\em weak inflation} if it admits a weak kernel-cokernel pair $(k,c) \in \mcWKC$, and a {\em weak deflation} is defined dually.

\begin{df}\label{D: wkc}
A {\em weak kernel-cokernel structure} $\mcWKC$ on an additive category $\mcA$ is a distinguished collection of weak kernel-cokernel pairs satisfying:  \vspace{.3em}

$(\WE_0)$ $\mcWKC \subseteq \mcWKC (\mcA)$ is an additive subcategory closed under isomorphism; \vspace{.3em}

$(\WE_1)$ for all objects $A\in\mcA$, the kernel-cokernel pair $A\stackrel{1_{A}}{\longrightarrow}A\stackrel{0}{\longrightarrow}0$ is a weak conflation; and \vspace{.3em}

$(\WE_1)^{\rm{op}}$ for all objects $A\in\mcA$, the kernel-cokernel pair $0\stackrel{0}{\longrightarrow}A\stackrel{1_{A}}{\longrightarrow}A$ is a weak conflation. \vspace{.3em}

An additive category $\mcA$ equipped with a weak kernel-cokernel structure $\mcWKC$ is called a {\em weak kernel-cokernel category} denoted by $(\mcA; \mcWKC)$.
\end{df}

The statement in Axiom $(\WE_0)$ that $\mcWKC$ is an additive category implies that weak conflations are closed under finite coproducts. \textbf{We will assume throughout the remainder of this section that $(\mcA; \mcWKC)$ is a weak kernel-cokernel category}.

\begin{exa} \label{E: max WKC} \rm
For every additive category $\mcA$, the collection $\mcWKC (\mcA)$ in which every weak kernel-cokernel pair is distinguished is a weak kernel-cokernel structure on $\mcA$.
\end{exa}

An additive functor $F \colon (\mcA; \mcWKC) \to (\mcA'; \mcWKC')$ of weak kernel-cokernel categories will be called {\em weak exact} if it respects weak conflations.

\begin{exa} \label{E: trivial} \rm
Every additive category $\mcA$ admits a unique minimal weak kernel-cokernel structure $ \mcWKC_0$ whose weak conflations are the split kernel-cokernel pairs. These weak conflations actually constitute an exact structure on $\mcA$. The weak kernel-cokernel category $(\mcA; \mcWKC_0)$ is minimal in the sense that if $(\mcA'; \mcWKC')$ is a weak kernel-cokernel category and $F \colon \mcA \to \mcA'$ an additive functor, then there is a unique weak exact functor $\tilde{F}$ that solves the commutative diagram
$$\xymatrix@R=35pt@C=35pt{(\mcA; \mcWKC_0) \ar@{-->}[rd]^-{\tilde{F}} \\
\mcA \ar@{^{(}->}[u] \ar[r]^-{F} & (\mcA'; \mcWKC').}
$$
\end{exa}

\subsection{Weak extensions of ideals}
Theorem~\ref{T: cond element} serves as the inspiration for the notion of a {\em weak extension} of morphisms, a weak variation of the concept of morphism extension \cite[Section 4]{FH} introduced by the second and third authors for an exact category.

\begin{df}\label{D: ext}
A morphism $e\colon F\to T$ in $(\mcA; \mcWKC)$ is said to be a {\em weak extension} of $j$ by $i$, denoted by
$e=i\star j$, if there is a commutative diagram
$$\xymatrix@R=30pt@C=30pt{
                             & F\ar[d]_{e^{2}}\ar[dr]^{j}\\
      X\ar[r]^{k}\ar[dr]_{i} & Y\ar[d]^{e^{1}}\ar[r]^(.45){c}   & Z \\
                             & T,}
$$
where $e=e^{1}e^{2}$ and the horizontal sequence is a weak conflation.
\end{df}

For example, the identity morphism $1_{X}$ of the object $X$ in the middle of a weak conflation $\xymatrix@1{T \ar[r]^{i} & X \ar[r]^{j} & F}$ may be regarded as a weak extension of $j$ by $i$, $1_X = i \star j$, as in
$$\xymatrix@R=30pt@C=30pt{
                                 & X \ar@{=}[d] \ar[dr]^{j} \\
       T  \ar[r]^{i} \ar[dr]_{i} & X \ar@{=}[d] \ar[r]^(.45){j}   & F \\
                                 & X.
}$$

\begin{prop}
If $\mcI$ and $\mcJ$ are ideals of $(\mcA; \mcWKC)$, then the collection $\mcI \diamond \mcJ$ of weak extensions $i \star j$ of morphisms $j \in \mcJ$ by $i \in \mcI$ forms an ideal of $(\mcA; \mcWKC)$ that contains $\mcI + \mcJ$. It is called the {\em weak extension ideal} of $\mcJ$ by $\mcI$.
\end{prop}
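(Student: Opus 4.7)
The plan is to verify the three ingredients of being an ideal containing $\mcI + \mcJ$: closure of $\mcI \diamond \mcJ$ under pre- and post-composition with arbitrary morphisms (the subbifunctor condition), closure under taking sums of parallel morphisms, and the inclusion $\mcI + \mcJ \subseteq \mcI \diamond \mcJ$. I expect each step to rely on exactly one of the axioms $(\WE_0)$, $(\WE_1)$, $(\WE_1)^{\mathrm{op}}$, in that order, together with the fact that $\mcI$ and $\mcJ$ are already ideals.

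First I would handle the subbifunctor property. Given a weak extension $e = e^1 e^2 = i \star j$ presented by a diagram as in Definition~\ref{D: ext}, with weak conflation $X \to Y \to Z$, and an arbitrary morphism $h \colon T \to T'$, I would simply replace $e^1$ by $h e^1$ in the same diagram. The slanted morphism $X \to T'$ then becomes $h e^1 k = h i$, which lies in $\mcI$ because $\mcI$ is already an ideal, so $he = (hi) \star j \in \mcI \diamond \mcJ$. Symmetrically, precomposing with $h' \colon F' \to F$ realizes $e h'$ as $i \star (j h')$. Neither operation affects the horizontal weak conflation, so no axiom of $\mcWKC$ is needed beyond the ambient data.

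Next I would treat closure under addition, which I expect to be the main obstacle because it is the only step requiring a nontrivial diagrammatic construction. Given parallel weak extensions $e = i \star j$ and $e' = i' \star j'$ from $F$ to $T$, presented by weak conflations $X \to Y \to Z$ and $X' \to Y' \to Z'$, I would invoke $(\WE_0)$ to form the coproduct weak conflation $X \oplus X' \to Y \oplus Y' \to Z \oplus Z'$ in $\mcWKC$. Assembling the column morphism $\binom{e^2}{e'^2} \colon F \to Y \oplus Y'$ and the row morphism $(e^1, e'^1) \colon Y \oplus Y' \to T$ as in the matrix trick already used in Section~\ref{S:itp} to show that products of ideals are ideals, one finds $(e^1,e'^1)\binom{e^2}{e'^2} = e + e'$, and the resulting slanted maps are $(i, i') \colon X \oplus X' \to T$ and $\binom{j}{j'} \colon F \to Z \oplus Z'$. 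Writing these as $i \pi_1 + i' \pi_2$ and $\iota_1 j + \iota_2 j'$ with $\pi_s, \iota_s$ the coproduct projections and injections shows they lie in $\mcI$ and $\mcJ$ respectively, so $e + e' \in \mcI \diamond \mcJ$.

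Finally I would verify $\mcI + \mcJ \subseteq \mcI \diamond \mcJ$. For any $i \colon X \to T$ in $\mcI$, the weak conflation $X \xrightarrow{1_X} X \to 0$ guaranteed by $(\WE_1)$, together with the choices $e^2 = 1_X$ and $e^1 = i$, expresses $i$ as $i \star 0 \in \mcI \diamond \mcJ$ (noting that $0 \in \mcJ$). Dually, $(\WE_1)^{\mathrm{op}}$ and the weak conflation $0 \to T \xrightarrow{1_T} T$ exhibit any $j \in \mcJ$ as $0 \star j$. Combined with closure under addition, this yields $\mcI + \mcJ \subseteq \mcI \diamond \mcJ$ and completes the plan.
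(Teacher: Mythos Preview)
Your proof is correct and follows essentially the same approach as the paper: closure under composition via $f(i\star j)g=(fi)\star(jg)$, closure under sums via the coproduct weak conflation from $(\WE_0)$ and the matrix trick, and the inclusions $\mcI,\mcJ\subseteq\mcI\diamond\mcJ$ via $(\WE_1)$ and $(\WE_1)^{\mathrm{op}}$. The only cosmetic difference is that you spell out the dual inclusion $j=0\star j$ explicitly, whereas the paper simply invokes duality.
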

\begin{proof}
First note that weak extensions $i \star j$ are closed under left and right multiplication, $f(i \star j)g = (fi) \star (jg)$. It suffices therefore to verify that the sum of parallel weak extensions $i \star j$, $i' \star j' \colon F \to T$ is itself such a weak extension. This follows using the same argument that shows a product of ideals is closed under sum (see~\cite[Lemma 4.1]{FH}) together with $(\WE_0)$ that weak conflations are closed under finite coproducts.

Lastly, observe that if $i \colon X \to T$ is a morphism in $\mcI$, then $i = i \star 0 \in \mcI \diamond  \mcJ$, as indicated by the commutative diagram
$$\xymatrix@R=30pt@C=30pt{
                                 & X \ar@{=}[d] \ar[dr]^0 \\
     X  \ar[r]^{1_X} \ar[dr]_{i} & X \ar[d]^i \ar[r]^0    & 0 \\
                                 & T}
$$
with middle row a weak conflation by $(\WE_1)$. Thus $\mcI \subseteq \mcI \diamond \mcJ$. That $\mcJ \subseteq \mcI \diamond \mcJ$ follows by a dual argument and therefore $\mcI + \mcJ \subseteq \mcI \diamond \mcJ$.
\end{proof}

This proposition tells us that the weak extension is an operation on ideals of $(\mcA; \mcWKC)$ that can be used to rephrase Theorem \ref{T: cond element} as follows, which is a $0$-dimensional analogue of \cite[Theorem 5.3]{FH}.

\begin{coro}\label{C: prod perp inclusion}
If $\mcI_{1}$ and $\mcI_{2}$ are ideals of $(\mcA; \mcWKC)$, then $\ell(\mcI_{2})\diamond\ell(\mcI_{1})\subseteq \ell(\mcI_{1}\mcI_2)$ and
$r(\mcI_{2})\diamond r(\mcI_{1})\subseteq r(\mcI_{1}\mcI_2)$.
\end{coro}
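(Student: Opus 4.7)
The plan is to observe that the notion of weak extension was designed precisely so that Theorem \ref{T: cond element} can be applied, and so the first inclusion is almost immediate. Specifically, suppose $e \in \ell(\mcI_{2})\diamond\ell(\mcI_{1})$, so that $e = i \star j$ is a weak extension of some $j \in \ell(\mcI_{1})$ by some $i \in \ell(\mcI_{2})$. Unpacking Definition~\ref{D: ext}, this provides a commutative diagram in which $e = e^{1}e^{2}$ and the horizontal row $X \xrightarrow{k} Y \xrightarrow{c} Z$ is a weak conflation. Since weak conflations are, in particular, weak kernel-cokernel pairs, $k$ is a weak kernel of $c$. This is exactly the data required by Theorem~\ref{T: cond element} (taking $g = c$, identifying the top diagonal $F_{1} \to Z$ with $j \in \ell(\mcI_{1})$ and the bottom diagonal $X \to F_{2}$ with $i \in \ell(\mcI_{2})$), which concludes that $e = e^{1}e^{2} \in \ell(\mcI_{1}\mcI_{2})$.

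For the second inclusion $r(\mcI_{2})\diamond r(\mcI_{1})\subseteq r(\mcI_{1}\mcI_{2})$, I would appeal to duality: the opposite category $\mcA^{\op}$ inherits a weak kernel-cokernel structure by reversing arrows in every weak conflation (Axioms $(\WE_{0})$, $(\WE_{1})$, and $(\WE_{1})^{\op}$ are visibly self-dual), and under this passage the left annihilator $\ell$ becomes the right annihilator $r$, while the product $\mcI_{1}\mcI_{2}$ becomes $\mcI_{2}\mcI_{1}$. Moreover, a weak extension $i \star j$ in $\mcA$ corresponds to the weak extension in $\mcA^{\op}$ obtained by reading the defining diagram backwards, which interchanges the two orderings in a way compatible with the swap in the product. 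Applying the first inclusion in $(\mcA^{\op}; \mcWKC^{\op})$ therefore yields the second.

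I do not anticipate any real obstacle: the main care is purely bookkeeping, namely matching up which of $\mcI_{1}, \mcI_{2}$ sits on which side of $\diamond$ versus which side of the product, and confirming that the duality swap reverses both consistently so that $r(\mcI_{2})\diamond r(\mcI_{1})$ (and not $r(\mcI_{1})\diamond r(\mcI_{2})$) ends up inside $r(\mcI_{1}\mcI_{2})$. Once the conventions are fixed, the proof amounts to a one-line reduction to Theorem~\ref{T: cond element} plus the dual.
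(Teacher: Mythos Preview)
Your proposal is correct and matches the paper's intent: the paper does not give a proof at all, simply presenting the corollary as a rephrasing of Theorem~\ref{T: cond element}, which is exactly the reduction you carry out (and your duality argument for the second inclusion is the expected companion, tracking correctly how passing to $\mcA^{\op}$ swaps $\ell \leftrightarrow r$, reverses the product, and reverses the two slots of $\diamond$).
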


\subsection{Ideal approximations}
Let $\mathcal{I}$ be an ideal of $\mathcal{A}$. An $\mathcal{I}$-{\em precover} of an object $X\in\mcA$ is a morphism $i\colon T\to X$ in $\mathcal{I}$ such that any other morphism $i'\colon T'\to X$ in $\mathcal{I}$ factors through $i$ as indicated by the dotted arrow
$$\xymatrix@R=30pt@C=30pt{
                  & T'\ar@{-->}[ld]_{}\ar[d]^{i'}\\
      T\ar[r]^{i} & X. }
$$
The ideal $\mathcal{I}$ is called {\em precovering} in $\mathcal{A}$ if every object $X\in\mathcal{A}$ has an $\mathcal{I}$-precover. The notion of a {\em preenveloping ideal} is defined in a dual manner and we say that an ideal is {\em approximating} if it is precovering or preenveloping.

\begin{exa}\rm
In the category $\projR$, an ideal $\mcI$ is precovering if and only if $\mcI(R,R) \normal R$ is finitely generated as a right ideal, $\mcI(R,R)=\Sigma_{s=1}^{n} \, r_{s}R$, in which case the homomorphism
$$(r_{1}, \ldots ,r_{n})\colon R^{n} \to R, \; \left(\begin{array}{c} x_1 \\ x_2 \\ \vdots \\ x_n \end{array}\right) \mapsto r_1x_1 + r_2 x_2 + \cdots + r_n x_n,$$
is an $\mcI$-precover of the object $R$.
\end{exa}

The following result shows that the sum and the product are operations on approximating ideals.

\begin{prop} \label{P: sum and prod} {\rm (\cite[Proposition 1]{FHHZ} and \cite[Theorem 8.1]{FGHT})}
If $\mcI_{1}$ and $\mcI_{2}$ are precovering ideals in $\mcA$, then both $\mcI_{1}+\mcI_{2}$ and $\mcI_{1}\mcI_{2}$ are also precovering ideals. Precisely, given an object $X \in \mcA$, an $\mcI_1$-precover $i_1 \colon T_1 \to X$ and $\mcI_2$-precovers $i_2 \colon T_2 \to X$ and $i'_2 \colon T'_2 \to T_1$, then $(i_{1}, i_{2}) \colon T_{1} \oplus T_{2} \to X$ is an $\mcI_1 + \mcI_2$-precover and $i_1 i'_2 \colon T'_2 \to X$ is an $\mcI_1 \mcI_2$-precover. Dually, if $\mcJ_{1}$ and $\mcJ_{2}$ are preenveloping ideals in $\mcA$, then so are $\mcJ_{1}+\mcJ_{2}$ and $\mcJ_{1}\mcJ_{2}$.
\end{prop}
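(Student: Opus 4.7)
The plan is to verify both claims by direct factorization, using the defining precovering property three times (once per specified precover), together with the elementary observation that ideals are closed under pre-/post-composition with arbitrary morphisms.

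For the sum, I would start with an arbitrary morphism $f \colon T \to X$ in $\mcI_{1} + \mcI_{2}$ and write $f = f_{1} + f_{2}$ with $f_{s} \in \mcI_{s}$. Since $i_{s} \colon T_{s} \to X$ is an $\mcI_{s}$-precover, each $f_{s}$ factors as $f_{s} = i_{s} g_{s}$ for some $g_{s} \colon T \to T_{s}$. Assembling the $g_{s}$ into a column vector then gives $f = (i_{1}, i_{2}) \binom{g_{1}}{g_{2}}$, so $f$ factors through $(i_{1}, i_{2})$. It remains to note that $(i_{1}, i_{2}) = i_{1} \pi_{1} + i_{2} \pi_{2}$, where $\pi_{s} \colon T_{1} \oplus T_{2} \to T_{s}$ are the canonical projections, which exhibits $(i_{1},i_{2})$ as an element of $\mcI_{1} + \mcI_{2}$.

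For the product, I would take any $f \colon T'_{2} \to X$ (after reindexing, any $f \colon T \to X$) in $\mcI_{1}\mcI_{2}$. By the description in Subsection 2.3, every such $f$ is of the form $f = a_{1} a_{2}$ with $a_{2} \colon T \to Y \in \mcI_{2}$ and $a_{1} \colon Y \to X \in \mcI_{1}$. First factor $a_{1}$ through the $\mcI_{1}$-precover $i_{1}$ as $a_{1} = i_{1} b_{1}$ with $b_{1} \colon Y \to T_{1}$; this yields $f = i_{1}(b_{1} a_{2})$. Since $b_{1} a_{2} \in \mcI_{2}$ (as $\mcI_{2}$ is an ideal), it factors through the $\mcI_{2}$-precover $i'_{2} \colon T'_{2} \to T_{1}$ as $b_{1} a_{2} = i'_{2} c$, giving $f = (i_{1} i'_{2}) c$. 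Membership $i_{1} i'_{2} \in \mcI_{1}\mcI_{2}$ is immediate from $i_{1} \in \mcI_{1}$ and $i'_{2} \in \mcI_{2}$. The preenveloping statements for $\mcJ_{1} + \mcJ_{2}$ and $\mcJ_{1} \mcJ_{2}$ follow by dualizing every arrow in the above arguments.

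Neither step presents a genuine obstacle; the proof is entirely formal, essentially a bookkeeping exercise. The one point requiring mild care is the appeal to the explicit description of $\mcI_{1}\mcI_{2}$: one must invoke (as in the discussion preceding Example~\ref{E: Z(p2)}) that every element of the product ideal can be represented as a single composition $a_{1} a_{2}$, rather than a sum of such compositions, so that the chain of two factorizations applies uniformly. The argument is the $0$-dimensional shadow of the composition-of-precovers idea used throughout ideal approximation theory, and requires nothing about the weak kernel-cokernel structure $\mcWKC$.
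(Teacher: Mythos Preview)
Your proof is correct and complete. The paper does not supply its own argument for this proposition, deferring instead to the cited references, so there is nothing to compare against; your direct verification is exactly the standard one, and your explicit invocation of the single-composition form of elements of $\mcI_{1}\mcI_{2}$ (established just before Example~\ref{E: Z(p2)}) is the one nontrivial ingredient and is handled correctly.
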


The following is a $0$-dimensional version of the Ideal Salce Lemma.

\begin{thm} \label{T: SL}
Let $(\mcI, \mcJ)$ be an ideal torsion pair in $(\mcA; \mcWKC)$ and $X \in \mcA$ an object. The following are equivalent for a weak conflation
$$\xymatrix@R=30pt@C=30pt{T \ar[r]^{i} & X \ar[r]^{j} & F:}$$
\begin{enumerate}
\item $i\colon T\to X$ is an $\mcI$-precover of $X;$
\item $i \in \mcI$ and $j \in \mcJ;$
\item $j\colon X\to F$ is a $\mcJ$-preenvelope of $X;$ and
\item $i\colon T\to X$ is an $\mcI$-precover of $X$ and $j\colon X\to F$ is a $\mcJ$-preenvelope of $X$.
\end{enumerate}
\end{thm}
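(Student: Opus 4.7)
The plan is to prove the chain of equivalences (1) $\Leftrightarrow$ (2) $\Leftrightarrow$ (3), after which the equivalence with (4) is automatic. The two technical inputs are Theorem~\ref{T: orth and ann}, which translates the $\Hom$-orthogonality defining $\mcJ = \mcI^{\perp}$ and $\mcI = {}^{\perp}\mcJ$ into the annihilator equalities $\mcJ = \ell(\mcI)$ and $\mcI = r(\mcJ)$, together with the weak kernel and weak cokernel properties of the given pair $(i,j)$.

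For (1) $\Leftrightarrow$ (2): if $i$ is an $\mcI$-precover then $i \in \mcI$ by definition, and for any $i' \colon T' \to X$ in $\mcI$ the precover property supplies $h$ with $i' = ih$, so $ji' = jih = 0$ because $(i,j)$ is a weak conflation; hence $j \in \ell(\mcI) = \mcJ$. Conversely, given $i \in \mcI$ and $j \in \mcJ$, any $i' \colon T' \to X$ in $\mcI$ satisfies $ji' = 0$ since $j \in \ell(\mcI)$, and then the weak kernel property of $i$ furnishes the required factorization $i' = ih$, showing $i$ is an $\mcI$-precover.

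The equivalence (2) $\Leftrightarrow$ (3) is formally dual: it invokes that $j$ is a weak cokernel of $i$ together with $\mcI = r(\mcJ)$, i.e., that $j' i = 0$ for every $j' \colon X \to F'$ in $\mcJ$. The implication (4) $\Rightarrow$ (1) is immediate, while (1) $\Rightarrow$ (4) follows by chaining (1) $\Rightarrow$ (2) $\Rightarrow$ (3) with the hypothesis in (1).

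I do not foresee a serious obstacle; the argument uses nothing beyond the definitions of weak kernel and weak cokernel for the given weak conflation, so no axiom of $\mcWKC$ beyond the fact that $(i,j)$ lies in $\mcWKC$ is actively invoked. This reflects the $0$-dimensional character of the theorem: where the classical Salce Lemma needs the dimension-shifting syzygy morphism $\Omega(j)$ inside the special $\mcI$-precover diagram (\ref{Eq:special precover}), here a direct weak kernel factorization of $i'$ through $i$ suffices.
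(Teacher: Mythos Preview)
Your proof is correct and follows essentially the same approach as the paper: the paper explicitly proves only (1) $\Leftrightarrow$ (2) via the weak kernel property of $i$ together with $\mcJ = \ell(\mcI)$ and $\mcI = r(\mcJ)$ from Theorem~\ref{T: orth and ann}, leaving the rest implicit by duality, exactly as you do.
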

\begin{proof}
We only prove ($1$) $\Leftrightarrow$ ($2$), the proof will refer to the following commutative diagram
$$\xymatrix@R=30pt@C=30pt{
               & T' \ar[rd]^{0} \ar@{-->}[ld] \ar[d]^{i'} \\
  T \ar[r]^{i} & X \ar[r]^{j}  & F.}
$$
To prove the forward implication, let $i' \in \mcI$ be a morphism for which the pair $(i',j)$ is composable. But then $i'$ factors through $i$, and $ji' = 0$. Thus $j \in \ell (\mcI) = \mcJ$. For the converse, suppose that $i' \colon T' \to X$ is in $\mcI = r(\mcJ)$. Then $ji' = 0$, so the weak kernel property of $i$ ensures that $i'$ factors through it.
\end{proof}

\begin{df} \label{D: Hom special}
Let $\mcI$ be an ideal of $(\mcA; \mcWKC)$ and $X \in \mcA$ an object. An $\mcI$-precover $i \colon T \to X$ of $X$ is called $\Hom$-{\em special} if it is a weak inflation. Dually, an $\mcI$-preenvelope of $X$ is $\Hom$-{\em special} if it is a weak deflation. Similarly, an ideal $\mcI$ of $(\mcA; \mcWKC)$ is $\Hom$-{\em special precovering} if every object $X \in \mcA$ has a $\Hom$-special $\mcI$-precover; dually, it is $\Hom$-{\em special preenveloping} if every object has a $\Hom$-special $\mcI$-preenvelope.
\end{df}

Depending on the reader's preference, the following consequence of Theorem~\ref{T: SL} may also be regarded as a $0$-dimensional version of the Ideal Salce Lemma.

\begin{coro} \label{C: Hom special}
If $(\mcI, \mcJ)$ is an ideal torsion pair in $(\mcA; \mcWKC)$, then $\mcI$ is $\Hom$-special precovering if and only if $\mcJ$ is $\Hom$-special preenveloping. Such ideal torsion pairs are called {\em complete.}
\end{coro}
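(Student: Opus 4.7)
The plan is to derive this corollary as an essentially immediate consequence of Theorem~\ref{T: SL}, since the definition of $\Hom$-special already encodes the existence of a weak conflation on which Salce's Lemma operates. Suppose first that $\mcI$ is $\Hom$-special precovering, and fix an arbitrary object $X \in \mcA$. By hypothesis there is a $\Hom$-special $\mcI$-precover $i \colon T \to X$, which by Definition~\ref{D: Hom special} is a weak inflation. Hence $i$ extends to a weak conflation
$$\xymatrix@R=30pt@C=30pt{T \ar[r]^{i} & X \ar[r]^{j} & F}$$
in $\mcWKC$. Since condition $(1)$ of Theorem~\ref{T: SL} holds for this weak conflation, so does condition $(3)$, i.e., $j \colon X \to F$ is a $\mcJ$-preenvelope of $X$. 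Because $j$ appears as the second morphism of a weak conflation, it is a weak deflation by definition, and so $j$ is a $\Hom$-special $\mcJ$-preenvelope of $X$.

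The reverse implication proceeds by the dual argument: given an object $X$ and a $\Hom$-special $\mcJ$-preenvelope $j \colon X \to F$, the fact that $j$ is a weak deflation produces a weak conflation $T \xrightarrow{i} X \xrightarrow{j} F$ in $\mcWKC$, to which one applies the $(3) \Rightarrow (1)$ direction of Theorem~\ref{T: SL} to conclude that $i \colon T \to X$ is an $\mcI$-precover. Since $i$ is by construction a weak inflation, it is a $\Hom$-special $\mcI$-precover of $X$, so every object of $\mcA$ admits such a precover.

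No real obstacle is anticipated: the work has already been done in Theorem~\ref{T: SL}, and the corollary reduces to matching the definitional requirement of $\Hom$-speciality (namely, being a weak inflation, respectively a weak deflation) with the fact that in a weak conflation each outer morphism automatically has the complementary type. The only point requiring even minor care is to notice that the hypothesis of Theorem~\ref{T: SL} does not demand that the weak conflation be chosen in advance; it suffices that any weak conflation with the appropriate first (or last) morphism exist, which is precisely what $\Hom$-speciality supplies.
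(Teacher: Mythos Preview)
Your argument is correct and matches the paper's approach exactly: the paper states this corollary as an immediate consequence of Theorem~\ref{T: SL} without further proof, and you have simply spelled out the routine details of that deduction.
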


The proof of the 1-dimensional ideal version of Salce's Lemma is a shifted version of Theorem~\ref{T: SL}. One uses the existence of an $\Ext$-special $\mcJ$-preenvelope of $\Omega (X)$ (see diagram (\ref{Eq:special precover})) to produce an $\Ext$-special $\mcI$-precover of $X$. The proof of Theorem~\ref{T: SL} only relies on the particular object $X \in \mcA$.

\begin{coro} \label{C: complete bijection}
The rule $\mcI \mapsto \ell(\mcI)$ is a bijective inclusion-reversing correspondence between $\Hom$-special precovering ideals and $\Hom$-special preenveloping ideals of $(\mcA; \mcWKC)$; the inverse rule is given by $\mcJ \mapsto r(\mcJ)$.
\end{coro}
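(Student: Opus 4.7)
The plan is to reduce the statement to Corollary~\ref{C: Hom special} by showing that any $\Hom$-special precovering (resp., preenveloping) ideal automatically occurs as one member of an ideal torsion pair. Once that is established, Corollary~\ref{C: Hom special} supplies the $\Hom$-special partner on the other side, and the Galois-type identities for $\ell$ and $r$ make the two rules mutually inverse.

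The crux is the following claim: if $\mcI$ is a $\Hom$-special precovering ideal, then $(\mcI,\ell(\mcI))$ is an ideal torsion pair. By Theorem~\ref{T: orth and ann} this reduces to $\mcI = r(\ell(\mcI))$, and the inclusion $\mcI \subseteq r(\ell(\mcI))$ is automatic. For the converse, given an object $X\in\mcA$, fix a $\Hom$-special $\mcI$-precover $i\colon T\to X$ sitting in a weak conflation $T\xrightarrow{i} X \xrightarrow{j} F$. The precover property forces $j i_{0} = 0$ for every $i_{0} \in \mcI$ composable with $j$, so $j \in \ell(\mcI)$. Hence if $i'\colon T'\to X$ lies in $r(\ell(\mcI))$, then $j i'=0$, and the weak kernel property of $i$ factors $i'$ through $i$; since $i\in \mcI$ this yields $i'\in\mcI$, and therefore $r(\ell(\mcI))\subseteq \mcI$.

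With $(\mcI,\ell(\mcI))$ now known to be an ideal torsion pair, Corollary~\ref{C: Hom special} guarantees that $\ell(\mcI)$ is $\Hom$-special preenveloping, so the rule $\mcI \mapsto \ell(\mcI)$ is well-defined. The dual construction shows that whenever $\mcJ$ is $\Hom$-special preenveloping, the pair $(r(\mcJ),\mcJ)$ is an ideal torsion pair with $r(\mcJ)$ $\Hom$-special precovering and $\mcJ = \ell(r(\mcJ))$. Composing the two rules in either order returns the identity, so they are mutual inverses, and inclusion-reversal is immediate from the definitions of $\ell$ and $r$. I expect no serious obstacle beyond the crux step $r(\ell(\mcI))\subseteq \mcI$; everything else is either formal Galois-connection bookkeeping or a direct invocation of Corollary~\ref{C: Hom special}.
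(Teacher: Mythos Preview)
Your argument is correct and matches the paper's intended approach: the paper states Corollary~\ref{C: complete bijection} without proof, treating it as immediate from Theorem~\ref{T: SL} and Corollary~\ref{C: Hom special}, and the key step you fill in---that a $\Hom$-special precovering ideal $\mcI$ satisfies $\mcI = r(\ell(\mcI))$---is precisely what the paper asserts (without proof) in the sentence preceding Theorem~\ref{T: prod perp equ}. Your verification of that step simply reuses the mechanics of the proof of Theorem~\ref{T: SL}, so there is no substantive difference in approach.
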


\begin{exa}\rm
Let $\mcA=\mathscr{T}$ be a triangulated category with the shift functor $\Sigma$. Note that the collection of distinguished triangles forms a weak kernel-cokernel structure on $\mathscr{T}$. Recall that a {\em projective class} \cite{C} is a pair $(\mathscr{P}, \mathscr{I})$ with $\mathscr{P}$ a subcategory closed under summands and $\mathscr{I}$ an ideal such that (1), $\Hom(P,i)=0$ for any object $P$ in $\mathscr{P}$ and any morphism $i$ in $\mathscr{I}$; and (2), for any object $X$ in $\mathscr{T}$, there is a triangle
$$\xymatrix@R=30pt@C=30pt{
				P\ar[r] & X\ar[r]^{i} & F\ar[r]  & \Sigma(P)}
$$
with $P\in\mathscr{P}$ and $i\in\mathscr{I}$. It is fairly easy to check that if $(\mathscr{P}, \mathscr{I})$ is a projective class in $\mathscr{T}$, then $(\mcI(\mathscr{P}), \mathscr{I})$ is a complete ideal torsion pair. Conversely, if $(\mcI,\mcJ)$ is a complete ideal torsion pair with $\mcI$ an object ideal, then $(\Ob(\mcI), \mcJ)$ is a projective class.
\end{exa}

The following proposition provides a sufficient condition for a precovering ideal $\mcI$ to generate a complete ideal torsion pair.

\begin{prop} \label{P: weak comp}
Let $\mcI$\ be a precovering ideal in $(\mcA; \mcWKC)$ such that every object $X \in \mcA$ has an $\mcI$-precover $i \colon T \to X$ which has a weak cokernel $j \colon X \to F$ that is a weak deflation. Then $j$ is a $\Hom$-special $\ell (\mcI)$-preenvelope of $X$. Consequently, $(r(\ell (\mcI)), \ell(\mcI))$ is a complete ideal torsion pair.
\end{prop}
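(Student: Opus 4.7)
The plan is to unpack the definition of a $\Hom$-special $\ell(\mcI)$-preenvelope and show that the two halves, namely $j \in \ell(\mcI)$ and the factorization property of a preenvelope, fall out immediately from the hypotheses: the precover property of $i$ and the weak cokernel property of $j$, respectively. Since $j$ is a weak deflation by assumption, the $\Hom$-specialness is given for free.

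First I would verify that $j \in \ell(\mcI)$. Let $i' \colon T' \to X$ be any morphism in $\mcI$ for which $(i', j)$ is composable. Because $i \colon T \to X$ is an $\mcI$-precover of $X$, there is a factorization $i' = i a$ for some $a \colon T' \to T$. Then $ji' = jia = 0$ since $ji = 0$ (this using only that $j$ is a weak cokernel of $i$). Hence $j \in \ell(\mcI)$.

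Next I would show that $j$ is an $\ell(\mcI)$-preenvelope of $X$. Let $j' \colon X \to F'$ be any morphism in $\ell(\mcI)$. In particular, $i \in \mcI$, so $j' i = 0$. The weak cokernel property of $j$ with respect to $i$ then produces the required factorization $j' = b j$ for some $b \colon F \to F'$. Combined with the previous step, $j$ is an $\ell(\mcI)$-preenvelope, and because $j$ is assumed to be a weak deflation, Definition~\ref{D: Hom special} gives that $j$ is a $\Hom$-special $\ell(\mcI)$-preenvelope.

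For the consequence, observe that $(r(\ell(\mcI)), \ell(\mcI))$ is automatically an ideal torsion pair (it is the ideal torsion pair generated by $\mcI$, as noted after Theorem~\ref{T: orth and ann}). Since every object of $\mcA$ admits a $\Hom$-special $\ell(\mcI)$-preenvelope by the argument above, the ideal $\ell(\mcI)$ is $\Hom$-special preenveloping. Corollary~\ref{C: Hom special} then delivers that $r(\ell(\mcI))$ is $\Hom$-special precovering, so the pair is complete. There is no genuine obstacle here; the content of the proposition is really the observation that the weak cokernel property of $j$ is precisely what is needed to upgrade $j$ from an element of $\ell(\mcI)$ to an $\ell(\mcI)$-preenvelope, with the weak deflation hypothesis then supplying $\Hom$-specialness.
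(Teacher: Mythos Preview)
Your proof is correct and follows essentially the same approach as the paper's own proof: the paper also first shows $j \in \ell(\mcI)$ (by referring back to the argument for $(1)\Rightarrow(2)$ of Theorem~\ref{T: SL}, which you have simply spelled out directly), then uses the weak cokernel property of $j$ to obtain the factorization of any $j' \in \ell(\mcI)$ through $j$, and finally invokes the weak deflation hypothesis for $\Hom$-specialness. Your added explanation of the ``consequently'' clause via Corollary~\ref{C: Hom special} is likewise what the paper intends.
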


\begin{proof}
To see that $j \colon X \to F$ belongs to $\ell (\mcI)$ apply the argument used to prove ($1$) $\Rightarrow$ ($2$) of Theorem \ref{T: SL}. Now suppose that $j' \colon X \to F'$ belongs to $\ell (\mcI)$. Then $j'i =0$ and we get the factorization
$$\xymatrix@R=30pt@C=30pt{
  T \ar[r]^i \ar[rd]_{0} & X \ar[r]^j \ar[d]^{j'} & F \ar@{-->}[dl] \\
                         & F'}
$$
from the weak cokernel property of $j$. As $j$ is a weak deflation, it is a $\Hom$-special $\ell (\mcI)$-preenvelope of $X$, as required.
\end{proof}

Every $\Hom$-special precovering ideal $\mcI$ of $(\mcA; \mcWKC)$ is part of a complete ideal torsion pair $(\mcI, \ell (\mcI))$, so it is evidently a right annihilator ideal. The following result is a $0$-dimensional version of \cite[Corollary 8.3]{FH}, which strengthens Corollary \ref{C: prod perp inclusion} to show that a weak extension of such ideals is itself a right annihilator ideal.

\begin{thm} \label{T: prod perp equ}
If $(\mcI_{1}, \mcJ_{1})$ and $(\mcI_{2}, \mcJ_{2})$ are complete ideal torsion pairs in $(\mcA; \mcWKC)$, then $\mcI_{1}\diamond\mcI_{2} = r(\mcJ_{2}\mcJ_{1})$ and dually
$\mcJ_{2}\diamond\mcJ_{1} = \ell(\mcI_{1}\mcI_{2})$.
\end{thm}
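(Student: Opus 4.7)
The plan is to establish $\mcJ_{2}\diamond\mcJ_{1} = \ell(\mcI_{1}\mcI_{2})$; the equality $\mcI_{1}\diamond\mcI_{2} = r(\mcJ_{2}\mcJ_{1})$ then follows by the formal dual argument. One direction, $\mcJ_{2}\diamond\mcJ_{1} \subseteq \ell(\mcI_{1}\mcI_{2})$, is already recorded in Corollary~\ref{C: prod perp inclusion} (using $\mcJ_{s}=\ell(\mcI_{s})$ for $s=1,2$), so the real content lies in the reverse inclusion.

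For that, take $f\colon A\to B$ in $\ell(\mcI_{1}\mcI_{2})$. Completeness of $(\mcI_{1},\mcJ_{1})$ furnishes a $\Hom$-special $\mcI_{1}$-precover of $A$, that is, a weak conflation
$$\xymatrix@R=22pt@C=22pt{T_{1}\ar[r]^{i_{1}} & A\ar[r]^{j_{1}} & F_{1},}$$
and Theorem~\ref{T: SL} guarantees $i_{1}\in\mcI_{1}$ and $j_{1}\in\mcJ_{1}$. The crucial observation is that $fi_{1}\in\mcJ_{2}$: for any $i_{2}\in\mcI_{2}$ composable with $i_{1}$, the composite $i_{1}i_{2}$ belongs to $\mcI_{1}\mcI_{2}$, so $(fi_{1})i_{2}=f(i_{1}i_{2})=0$, whence $fi_{1}\in\ell(\mcI_{2})=\mcJ_{2}$.

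I would then read off $f$ as a weak extension directly from the diagram
$$\xymatrix@R=25pt@C=25pt{
& A \ar@{=}[d] \ar[dr]^{j_{1}}\\
T_{1} \ar[r]^{i_{1}} \ar[dr]_{fi_{1}} & A \ar[d]^{f} \ar[r]^{j_{1}} & F_{1} \\
& B,
}$$
in which both triangles commute trivially and $f=f\cdot 1_{A}$. This exhibits $f = (fi_{1})\star j_{1}\in\mcJ_{2}\diamond\mcJ_{1}$, finishing the first equality. The dual direction is entirely parallel: given $f\in r(\mcJ_{2}\mcJ_{1})$, a $\Hom$-special $\mcJ_{1}$-preenvelope $T_{1}'\xrightarrow{i_{1}'}B\xrightarrow{j_{1}'}F_{1}'$ of $B$ yields $j_{1}'f\in r(\mcJ_{2})=\mcI_{2}$, and $f = i_{1}'\star(j_{1}'f)$ is witnessed by the mirror diagram with $e^{1}=1_{B}$ and $e^{2}=f$.

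No genuine obstacle arises: the argument is short precisely because Definition~\ref{D: ext} already allows the identity morphism in the middle of a weak conflation to serve as the ``glue'' of a weak extension. In contrast to the $1$-dimensional analogue \cite[Corollary 8.3]{FH}, one never needs to splice two conflations together; the single weak conflation produced by the $\Hom$-special approximation, together with the choices $e^{1}=f$ and $e^{2}=1$ (resp.\ $e^{1}=1$ and $e^{2}=f$), does all the work.
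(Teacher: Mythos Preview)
Your argument is correct and in fact slightly more economical than the paper's. Both proofs end with the same weak-extension diagram built on the weak conflation $T_{1}\xrightarrow{i_{1}}A\xrightarrow{j_{1}}F_{1}$ coming from completeness of $(\mcI_{1},\mcJ_{1})$, and both identify $f=(fi_{1})\star j_{1}$ with $j_{1}\in\mcJ_{1}$ and $fi_{1}\in\mcJ_{2}$. The difference lies only in how $fi_{1}\in\mcJ_{2}$ is verified: the paper invokes completeness of $(\mcI_{2},\mcJ_{2})$ to produce a second weak conflation $T_{2}\xrightarrow{i_{2}}T_{1}\xrightarrow{j_{2}}F_{2}$ and then uses the weak-cokernel property of $j_{2}$ to factor $fi_{1}$ through $j_{2}\in\mcJ_{2}$; you instead observe directly that $(fi_{1})i_{2}=f(i_{1}i_{2})=0$ for every $i_{2}\in\mcI_{2}$, which is exactly the conductor description $\ell(\mcI_{1}\mcI_{2})=\ell(\ell(\mcI_{2})\colon\!\mcI_{1})$ of Corollary~\ref{C: orth of prod}. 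Your route therefore uses completeness of only one of the two torsion pairs (the one indexed by~$1$), whereas the paper's presentation uses both; in either case the resulting morphism $fi_{1}$ is the same, so the final diagrams coincide.
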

\begin{proof}
We only prove the second equality. Corollary \ref{C: prod perp inclusion} implies the inclusion $\mcJ_{2}\diamond\mcJ_{1} \subseteq \ell(\mcI_{1} \mcI_{2})$. To see the other inclusion, suppose that
$g\colon X\to F$ belongs to $\ell(\mcI_{1}\mcI_{2})$ and let us represent $g=j'_{2}\star j_{1}$ as a weak extension of morphisms, with $j_{1}\in\mcJ_{1}$ and
$j'_{2}\in\mcJ_{2}$. By Theorem \ref{T: SL}, the object $X$ admits a weak conflation
$$\xymatrix@C=30pt{T_{1} \ar[r]^{i_{1}} & X \ar[r]^{j_{1}} & F_{1}}$$
with $i_{1}\in\mcI_{1}$ and $j_{1}\in\mcJ_{1}$, and there is a weak kernel-cokernel pair, in fact, a weak conflation,
$$\xymatrix@C=30pt{T_{2} \ar[r]^{i_{2}} & T_{1} \ar[r]^{j_{2}} & F_{2}}$$
with $i_{2}\in\mcI_{2}$ and $j_{2}\in\mcJ_{2}$.
This gives rise to a commutative diagram
$$\xymatrix@R=30pt@C=30pt{
     T_{2}\ar[r]^{i_{2}}\ar@{=}[d] & T_{1}\ar[d]^{i_{1}}\ar[r]^{j_{2}} & F_{2}\ar@{-->}[d]^{f}\\
     T_{2}\ar[r]^{i_{1}i_{2}}      & X \ar[d]^{j_{1}}  \ar[r]^{g}      & F \\
                                   & F_{1}, }
$$
where the morphism $f\colon F_{2}\to F$ is induced by the weak cokernel property of $j_{2}$ and the hypothesis $g(i_{1}i_{2})=0$. But then there is the commutative diagram
$$\xymatrix@R=30pt@C=30pt{
                                          & X \ar@{=}[d]\ar[dr]^{j_{1}}    \\
     T_{1} \ar[r]^{i_{1}}\ar[dr]_{fj_{2}} & X \ar[d]^{g}\ar[r]^{j_{1}}  & F_{1}\\
                                          & F,}
$$
where the middle row is a weak conflation, $j_{1}\in\mcJ_{1}$, and
$fj_{2}\in\mcJ_{2}$. Thus we get that $g=fj_{2}\star j_{1} \in\mcJ_{2}\diamond \mcJ_{1}$.
\end{proof}

\subsection{Weak kernel-cokernel categories with enough weak inflations and weak deflations}
\begin{df} \label{D: enough}
A weak kernel-cokernel category $(\mcA; \mcWKC)$ is said to have {\em enough weak inflations} if every morphism $f \colon X \to Y$ in $\mcA$ has a weak kernel $k \colon K \to X$ which is a weak inflation. Dually, we say that $(\mcA; \mcWKC)$ has {\em enough weak deflations} if for every morphism $f$ as above, there exists a weak deflation which is a weak cokernel of $f$.
\end{df}

The following is an immediate consequence of Proposition \ref{P: weak comp}.

\begin{prop} \label{P: precover complete}
If $\mcI$ is a precovering ideal in a weak kernel-cokernel category $(\mcA; \mcWKC)$ with enough weak deflations, then the ideal torsion pair $(r(\ell (\mcI)), \ell(\mcI))$ is complete.
\end{prop}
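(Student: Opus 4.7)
The plan is to reduce this to Proposition~\ref{P: weak comp} by simply verifying that its hypothesis is satisfied for every object. The statement of Proposition~\ref{P: weak comp} requires, for every $X \in \mcA$, the existence of an $\mcI$-precover $i\colon T \to X$ together with a weak cokernel $j\colon X \to F$ of $i$ that is a weak deflation. Here $\mcI$ is assumed precovering, so the first ingredient is free: every object $X$ has at least one $\mcI$-precover $i\colon T \to X$. The hypothesis of enough weak deflations, given in Definition~\ref{D: enough}, says precisely that every morphism in $\mcA$ admits a weak cokernel that is a weak deflation; applied to the morphism $i$, this yields the desired $j\colon X \to F$.

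With these two ingredients assembled, Proposition~\ref{P: weak comp} applies verbatim: it asserts both that $j$ is a $\Hom$-special $\ell(\mcI)$-preenvelope of $X$ (and hence, since $X$ was arbitrary, that $\ell(\mcI)$ is $\Hom$-special preenveloping) and that $(r(\ell(\mcI)), \ell(\mcI))$ is a complete ideal torsion pair. Invoking Corollary~\ref{C: Hom special} at this point is not even necessary, since the completeness conclusion is already part of the statement of Proposition~\ref{P: weak comp}.

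There is genuinely no obstacle here: the proof is a one-line unpacking of definitions followed by an application of an already-proven proposition. If anything needed checking, it would be the observation that ``enough weak deflations'' is asserted for \emph{every} morphism, not merely for morphisms of a particular form, so in particular it applies to the chosen $\mcI$-precover $i$. Everything else is immediate.
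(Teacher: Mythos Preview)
Your proposal is correct and matches the paper's approach exactly: the paper simply states that Proposition~\ref{P: precover complete} is an immediate consequence of Proposition~\ref{P: weak comp}, and your write-up just spells out why the hypotheses of that proposition are met.
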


If $\mathcal{I}_{1}$ and  $\mathcal{I}_{2}$ are precovering ideals, then Proposition \ref{P: sum and prod} implies that $\mathcal{I}_{1}+\mathcal{I}_{2}$ is a precovering ideal, and its left annihilator is given by $\ell(\mathcal{I}_{1}+\mathcal{I}_{2})=\ell(\mathcal{I}_{1})\cap\ell(\mathcal{I}_{2})$. Let us apply this observation to complete ideal torsion pairs.

\begin{coro}\label{C: int and sum in wac}
Let $(\mcA; \mcWKC)$ be a weak kernel-cokernel category with enough weak inflations and weak deflations. If $(\mathcal{I}_{1}, \mathcal{J}_{1})$ and $(\mathcal{I}_{2}, \mathcal{J}_{2})$ are complete ideal torsion pairs in $(\mcA; \mcWKC)$, then so are the ideal torsion pairs $(r(\mathcal{J}_{1}\cap\mathcal{J}_{2}), \mathcal{J}_{1}\cap\mathcal{J}_{2})$ and $(\mathcal{I}_{1}\cap\mathcal{I}_{2}, \ell(\mathcal{I}_{1}\cap\mathcal{I}_{2}))$.
\end{coro}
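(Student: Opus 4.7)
The plan is to reduce the statement directly to Proposition~\ref{P: precover complete} (and its dual) together with Proposition~\ref{P: sum and prod} and Corollary~\ref{C: orth of sum}, with the identification $\mcJ_i = \ell(\mcI_i)$ and $\mcI_i = r(\mcJ_i)$ supplied by Theorem~\ref{T: orth and ann}.

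For the first ideal torsion pair, I would start by noting that since $(\mcI_1,\mcJ_1)$ and $(\mcI_2,\mcJ_2)$ are complete, both $\mcI_1$ and $\mcI_2$ are in particular precovering. By Proposition~\ref{P: sum and prod}, the sum $\mcI_1+\mcI_2$ is also a precovering ideal in $(\mcA;\mcWKC)$. By Corollary~\ref{C: orth of sum} combined with $\mcJ_i = \ell(\mcI_i)$,
$$\ell(\mcI_1+\mcI_2)=\ell(\mcI_1)\cap\ell(\mcI_2)=\mcJ_1\cap\mcJ_2.$$
Since $(\mcA;\mcWKC)$ has enough weak deflations, Proposition~\ref{P: precover complete} applied to $\mcI_1+\mcI_2$ yields that the ideal torsion pair $(r(\ell(\mcI_1+\mcI_2)),\ell(\mcI_1+\mcI_2))$ is complete, which is exactly $(r(\mcJ_1\cap\mcJ_2),\mcJ_1\cap\mcJ_2)$.

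For the second ideal torsion pair, I would proceed dually. The dual of Proposition~\ref{P: sum and prod} shows that $\mcJ_1+\mcJ_2$ is preenveloping (both $\mcJ_i$ being preenveloping by completeness), and the dual half of Corollary~\ref{C: orth of sum} combined with $\mcI_i=r(\mcJ_i)$ gives
$$r(\mcJ_1+\mcJ_2)=r(\mcJ_1)\cap r(\mcJ_2)=\mcI_1\cap\mcI_2.$$
Since $(\mcA;\mcWKC)$ has enough weak inflations, the dual of Proposition~\ref{P: precover complete} applied to $\mcJ_1+\mcJ_2$ yields the completeness of $(r(\mcJ_1+\mcJ_2),\ell(r(\mcJ_1+\mcJ_2)))=(\mcI_1\cap\mcI_2,\ell(\mcI_1\cap\mcI_2))$.

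There is no real obstacle beyond bookkeeping: the only point requiring a little care is to confirm that the two ideals in each pair really form an ideal torsion pair (which follows from Proposition~\ref{P: precover complete}, since the statement there is that the pair generated by the precovering ideal is complete, not merely $\Hom$-orthogonal). Consequently, the proof reduces to one line in each direction after substituting the identifications $\mcJ_i=\ell(\mcI_i)$ and $\mcI_i=r(\mcJ_i)$.
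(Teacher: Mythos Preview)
Your proposal is correct and follows exactly the paper's approach: the paper's proof is the one-sentence observation preceding the corollary, namely that $\mcI_1+\mcI_2$ is precovering with $\ell(\mcI_1+\mcI_2)=\mcJ_1\cap\mcJ_2$, so Proposition~\ref{P: precover complete} (and its dual) applies. Your write-up makes the bookkeeping explicit but is otherwise identical in content.
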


Next assume that $\mcA$ has infinite coproducts. Suppose $\mcM=\{\,f_{s}\colon M_{s}\to N_{s} \,\mid\, s\in\mcS \,\}$ is a set of morphisms. Then for any $s\in\mcS$ and any morphism $x_{s}\colon N_{s}\to X$ in $\mcA$, there is a commutative diagram
$$\xymatrix@R=30pt@C=30pt{
	M_{s}\ar[r]^{f_{s}}\ar[d]_{\iota_{s}} & N_{s}\ar[d]^{e_{s}}\ar[r]^{x_{s}} & X\ar@{=}[d]\\
	\oplus_{s}\,M_{s}\ar[r]^{\oplus_{s}\,f_{s}} & \oplus_{s}\,N_{s}\ar[r]^{x} & X,}
$$
where $\oplus_{s}\,f_{s}\colon \oplus_{s}\,M_{s}\to \oplus_{s}\,N_{s}$ and $x\colon \oplus_{s}\, N_{s}\to X$ are canonical morphisms. Let $g\colon X\to Y$ be an arbitrary morphism in $\mcA$. The universal property of coproducts implies that $gx_{s}f_{s}=0$ for any $s\in\mcS$ if and only if $gx(\oplus_{s}\, f_{s})=0$. It follows that
$(\oplus_{s}\, f_{s})^{\perp} \subseteq {\mcM^{\perp}}$. Conversely, if $g\colon X\to Y$ belongs to $\mcM^{\perp}$, then for any morphism $x'\colon \oplus_{s}\, N_{s}\to X$ in $\mcA$, we have that $gx'(\oplus_{s}\, f_{s}){\iota_{s}}=gx'e_{s}f_{s}=0$ for every $s\in\mcS$, and so $gx'(\oplus_{s}\, f_{s})=0$. It follows that $\mcM^{\perp} \subseteq {(\oplus_{s}\, f_{s})}^{\perp}$, and therefore $\mcM^{\perp}={(\oplus_{s}\, f_{s})}^{\perp}$.

\begin{thm}{\em (Eklof-Trlifaj Lemma)}\label{T: ET}
Let $(\mcA; \mcWKC)$ be a weak kernel-cokernel category with enough weak deflations. If $\mcA$ has infinite coproducts, then the ideal torsion pair $(^{\perp}(\mcM^{\perp}), \mcM^{\perp})$ generated by a set $\mcM$ of morphisms is complete.
\end{thm}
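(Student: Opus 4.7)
The plan is to produce, for each object $X \in \mcA$, a $\Hom$-special $\mcM^{\perp}$-preenvelope and then invoke Corollary~\ref{C: Hom special}. The paragraph immediately preceding the theorem already carries out the first step: setting $f := \oplus_{s \in \mcS} f_{s} \colon \oplus_{s} M_{s} \to \oplus_{s} N_{s}$, one has $\mcM^{\perp} = f^{\perp}$, so the ideal torsion pair generated by $\mcM$ coincides with the one generated by the single morphism $f \colon M \to N$. It therefore suffices to construct a $\Hom$-special $f^{\perp}$-preenvelope of each $X \in \mcA$.

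Fix $X \in \mcA$ and put $S := \Hom(N, X)$. Using infinite coproducts, form $f^{(S)} := \oplus_{y \in S}\, f \colon M^{(S)} \to N^{(S)}$ together with the canonical morphism $\phi \colon N^{(S)} \to X$ whose $y$-component is $y \colon N \to X$ itself, and set $\alpha := \phi \circ f^{(S)} \colon M^{(S)} \to X$. The hypothesis of enough weak deflations (Definition~\ref{D: enough}) furnishes a weak cokernel $j \colon X \to F$ of $\alpha$ which is itself a weak deflation; by Definition~\ref{D: Hom special}, it only remains to see that $j$ is an $f^{\perp}$-preenvelope of $X$.

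To see $j \in f^{\perp}$: for each $y \in S$, the identity $\alpha \circ \iota_{y}^{M} = \phi \circ f^{(S)} \circ \iota_{y}^{M} = \phi \circ \iota_{y}^{N} \circ f = y \circ f$, combined with $j \alpha = 0$, yields $j \circ y \circ f = j \circ \alpha \circ \iota_{y}^{M} = 0$, which is precisely $\Hom(f, j) = 0$. For the preenveloping property: given $j' \colon X \to F'$ in $f^{\perp}$, each restriction $(j' \alpha) \circ \iota_{y}^{M} = j' \circ y \circ f$ vanishes, so the universal property of the coproduct $M^{(S)}$ forces $j' \circ \alpha = 0$, and the weak cokernel property of $j$ then supplies a factorization $j' = \beta \circ j$. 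Corollary~\ref{C: Hom special} upgrades the existence of such $\Hom$-special preenvelopes at every $X$ to completeness of the ideal torsion pair $({^{\perp}(\mcM^{\perp})}, \mcM^{\perp})$.

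There is no real obstacle once the observation preceding the theorem has reduced the problem to a single morphism $f$: the construction of $\alpha$ is essentially forced, being the \emph{universal} morphism into $X$ that factors through $f$, and the verification is bookkeeping with the universal property of the coproduct. The hypotheses actually bite in two places: enough weak deflations ensures that $j$ can be taken to be a weak deflation rather than a mere weak cokernel, and the combination of infinite coproducts with the \emph{set} hypothesis on $\mcM$ allows the formation of $N = \oplus_{s \in \mcS} N_{s}$ and of $M^{(S)}$, $N^{(S)}$ indexed by the $\Hom$-set $\Hom(N, X)$. Notably, unlike the $1$-dimensional Eklof--Trlifaj Lemma, no transfinite pushout iteration is needed; the $0$-dimensional statement is achieved in a single step.
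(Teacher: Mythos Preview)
Your proof is correct and follows the same route as the paper: reduce to a single morphism via the coproduct observation preceding the theorem, form the canonical map $\alpha = \phi \circ f^{(S)}$ with $S = \Hom(N,X)$, take a weak-deflation weak cokernel $j$ of $\alpha$, and verify directly that $j$ is a $\Hom$-special $f^{\perp}$-preenvelope before applying Corollary~\ref{C: Hom special}. The only differences are notational, and your closing remarks on where the hypotheses are used and the contrast with the $1$-dimensional Eklof--Trlifaj Lemma are a welcome addition.
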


\begin{proof}
We may reduce the argument to the situation that $\mcM$ only contains a single morphism $f\colon A\to B$. Let $X$ be an object in $\mcA$, and $\mcS$ be the set $\Hom(B, X)$. Then for any morphism $s\colon B\to X$, there is a commutative diagram
$$\xymatrix@R=30pt@C=30pt{
		A\ar[d]_{\sigma_{s}}\ar[r]^{f}   & B\ar[r]^{s}\ar[d]^{\sigma_{s}}   & X\ar@{=}[d]\\
		A^{(\mcS)}\ar[r]^{f^{(\mcS)}}    & B^{(\mcS)}\ar[r]^{\delta}        & X}
$$
with $f^{(\mcS)}\colon A^{(\mcS)}\to B^{(\mcS)}$ and $\delta\colon B^{(\mcS)}\to X$ canonical morphisms. Take a weak cokernel $j\colon X\to F$ of the morphism $\delta f^{(\mcS)}\colon A^{(\mcS)}\to X$, which is a weak deflation. One can easily check that $j$ is a $\Hom$-special $f^{\perp}$-preenvelope of $X$, since a morphism $g\colon X\to Y$ belongs to $f^{\perp}$ if and only if $g\delta f^{(\mcS)}=0$. Therefore the ideal torsion pair generated by a morphism $f$ is complete by Corollary \ref{C: Hom special}.
\end{proof}

\section{Weak Exact Categories} \label{S: wec}
In this section, we introduce the notion of a weak exact structure on an additive category and examine the properties of complete ideal torsion pairs in weak exact categories.

\begin{df}
A {\em weak exact category} $(\mcA; \mcWE)$ is a weak kernel-cokernel category satisfying the additional axioms:\vspace{.3em}

$(\WE_{2})$ weak inflations are closed under composition; \vspace{.3em}

$(\WE_{2})^{\rm{op}}$ weak deflations are closed under composition; and \vspace{.3em}

$(\WE_{3})$ {\em (Five Lemma)} for a commutative diagram with each row a weak conflation:
$$\xymatrix@R=30pt@C=30pt{
	X\ar[r]^{k} \ar[d]^f & Y\ar[r]^{c}\ar[d]^g  & Z\ar[d]^h\\
    X'\ar[r]^{k'}        & Y'\ar[r]^{c'}        & Z',}
$$
if two of the three morphisms $f, g$, and $h$ are isomorphisms, then so is the third one.
\end{df}

\begin{exa}\label{E: extri cat}\rm
An extriangulated category $(\mathscr{C},\mathbb{E},\mathfrak{s})$ is an additive category $\mathscr{C}$ equipped with a bifunctor $\mathbb{E}\colon \mathscr{C}^{\textrm{op}}\times\mathscr{C}\to \Ab$ and an $\mathbb{E}$-triangulation $\mathfrak{s}$ satisfying $\rm(ET1)$, $\rm(ET2)$, $\rm(ET3)$, $\rm(ET3)^{\rm{op}}$, $\rm(ET4)$, and $\rm(ET4)^{\rm{op}}$, see \cite[Definition 2.12]{NP}. For any pair $(Z, X)$ of objects in $\mathscr{C}$, an element $\delta \in\mathbb{E}(Z, X)$ is called an $\mathbb{E}$-extension, and $\mathfrak{s}$ associates an equivalence class $\mathfrak{s}(\delta)=[X\stackrel{k}{\longrightarrow}Y\stackrel{c}{\longrightarrow}Z]$ to any $\mathbb{E}$-extension $\delta \in\mathbb{E}(Z, X)$. In this case, one says that the sequence $X\stackrel{k}{\longrightarrow}Y\stackrel{c}{\longrightarrow}Z$ realizes $\delta$. A sequence
$$\xymatrix@R=30pt@C=30pt{
    X\ar[r]^{k}  & Y\ar[r]^{c}  & Z, }
$$
is called a {\em conflation}, if it realizes some $\mathbb{E}$-extension
$\delta\in\mathbb{E}(Z, X)$. A morphism $k\colon X\to Y$ is called an {\em inflation} if it admits some conflation; {\em deflations} are defined dually. It follows from \cite[Proposition 3.3]{NP} that conflations in $(\mathscr{C},\mathbb{E},\mathfrak{s})$ are weak kernel-cokernel pairs, so that we can consider the collection $\mcW_{\extri}$ of all conflations in $(\mathscr{C},\mathbb{E},\mathfrak{s})$, which is closed under isomorphism by \cite[Proposition 3.7]{NP}. Condition $\rm(ET2)$ tells that split $\mathbb{E}$-extensions are conflations and conflations are closed under finite coproducts, and so $\mcW_{\extri}$ satisfies $\rm(WE_0)$, $\rm(WE_1)$, and $\rm(WE_1)^{\rm{op}}$. Conditions $\rm(ET4)$ and $\rm(ET4)^{\rm{op}}$ ensure that inflations and deflations are closed under composition respectively, and hence $\rm(WE_2)$ and $\rm(WE_2)^{\rm{op}}$ hold. But $\rm(WE_3)$ follows from \cite[Corollary 3.6]{NP}. Thus $(\mathscr{C};\mcW_{\extri})$ is a weak exact category.
\end{exa}

\begin{rem}\rm
We do not know of any examples of weak exact categories that are not extriangulated. In a triangulated category, every morphism is part of a triangle and is therefore both a weak inflation as well as a weak deflation. We do not know of any example of a weak exact category all of whose morphisms are both weak inflations and weak deflations that is not triangulated.
\end{rem}

\textbf{In the remainder of this section, we will assume that $(\mcA; \mcWE)$ is a weak exact category}.

\subsection{Elementary properties}
The next proposition follows from $\rm(WE_2)$ immediately.

\begin{prop}\label{P: comm dia in wec}
For two composable weak inflations $k_{1}\colon X\to Y_{1}$ and $k_{2}\colon Y_{1}\to Y_{2}$ in $(\mcA; \mcWE)$, there is a commutative diagram
$$\xymatrix@R=30pt@C=30pt{
   X\ar[r]^{k_{1}}\ar@{=}[d]  & Y_{1}\ar[d]^{k_{2}}\ar[r]^{c_{1}} & Z_{1}\ar@{-->}[d]^{\alpha}\\
   X\ar[r]^{k_{2}k_{1}}       & Y_{2}\ar[d]^{c_{2}}\ar[r]^{c}     & Z\ar@{-->}[d]^{\beta} \\
                              & Z_{2}\ar@{=}[r]                   & Z_{2}, }
$$
where the first and second rows and the second column are weak conflations.
\end{prop}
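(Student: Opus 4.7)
The plan is to build the diagram by first producing the three weak conflations individually, and then producing the dotted arrows $\alpha$ and $\beta$ by invoking the weak cokernel property twice.

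First, since $k_1$ and $k_2$ are each weak inflations, the very definition supplies weak conflations
$$\xymatrix@C=20pt{X \ar[r]^{k_1} & Y_1 \ar[r]^{c_1} & Z_1} \quad \text{and} \quad \xymatrix@C=20pt{Y_1 \ar[r]^{k_2} & Y_2 \ar[r]^{c_2} & Z_2},$$
which give the first row and the second column. Now apply $(\WE_2)$: the composition $k_2 k_1 \colon X \to Y_2$ is itself a weak inflation, so it admits a weak cokernel $c \colon Y_2 \to Z$ forming the second-row weak conflation $\xymatrix@C=20pt{X \ar[r]^{k_2 k_1} & Y_2 \ar[r]^{c} & Z}$.

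Next I construct $\alpha$. Since $c$ is a (weak) cokernel of $k_2 k_1$, the composition $(c k_2) k_1 = c (k_2 k_1) = 0$, so the weak cokernel property of $c_1$ applied to $c k_2 \colon Y_1 \to Z$ produces a factorization $\alpha \colon Z_1 \to Z$ with $\alpha c_1 = c k_2$, which is the commutativity of the top right square. Dually, to construct $\beta$, observe that $c_2 (k_2 k_1) = (c_2 k_2) k_1 = 0$ because $c_2$ is a weak cokernel of $k_2$. The weak cokernel property of $c$ then yields $\beta \colon Z \to Z_2$ with $\beta c = c_2$, giving the commutativity of the bottom right square.

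Since the claim in the proposition only asks that the first row, second row, and second column be weak conflations (and not the third column), no further work is needed; all three are already in hand from steps above, and the morphisms $\alpha, \beta$ were obtained purely from the universal-like property of weak cokernels. There is no real obstacle here beyond this routine bookkeeping, which is why the statement is advertised as an immediate consequence of $(\WE_2)$.
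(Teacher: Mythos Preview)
Your argument is correct and is precisely the routine verification the paper has in mind when it says the proposition follows from $(\WE_2)$ immediately: you invoke $(\WE_2)$ to obtain the middle-row weak conflation and then read off $\alpha$ and $\beta$ from the weak cokernel properties of $c_1$ and $c$, respectively. There is nothing to add.
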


Let us note that the existence of weak kernels is related to the existence of weak pullbacks. Recall the definition of a {\em weak pullback} of two morphisms $b\colon B\to D$ and
$g\colon C\to D$ in an additive category $\mcA$. It is a pair of morphisms
$a\colon A\to C$ and $f\colon A\to B$ satisfying (1) $bf=ga$; and (2) if given any other pair of morphisms $a'\colon A'\to C$ and $f'\colon A'\to B$ for which $bf'=ga'$, then there is a morphism $\alpha\colon A'\to A$ making the following diagram commute
$$\xymatrix@R=30pt@C=30pt{
    A'\ar@/_1pc/[rdd]_{a'}\ar@/^1pc/[rrd]^{f'}\ar@{-->}[rd]^(.6){\alpha} && \\
      & A\ar[d]_{a}\ar[r]^{f}  & B\ar[d]^{b}\\
      & C\ar[r]^{g}            & D. }
$$
A {\em pullback} of $b$ and $g$ is a weak pullback for which the factorization is unique. A {\em $($weak$)$ pushout} of two morphisms starting in the same object is defined dually. If the morphism $(b,-g)\colon B\oplus C \to D$ has a weak kernel $\theta \colon A \to B\oplus C$, then any morphism
$\left(\begin{smallmatrix}f'\\a'\end{smallmatrix}\right)\colon A' \to B\oplus C$ satisfying $(b,-g)\left(\begin{smallmatrix}f'\\a'\end{smallmatrix}\right)=bf'-ga'=0$ factors through $\theta$ by its weak kernel property, and therefore the pair of morphisms $(0,1)\theta$ and $(1,0)\theta$ is a weak pullback of $b$ and $g$. This discussion may be used to verify the following.

\begin{prop}\label{P: weak pb in wec}
A weak pullback of a weak deflation in $(\mcA; \mcWE)$ along any morphism exists. Dually, a weak pushout of a weak inflation in $(\mcA; \mcWE)$ along any morphism exists.
\end{prop}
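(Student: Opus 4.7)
The plan is to reduce the claim to showing that the morphism $(b,-g) \colon B \oplus C \to D$ is a weak deflation, since the discussion directly preceding the proposition already verifies that any weak kernel of this morphism yields the required weak pullback via its projection components. A weak deflation has a weak kernel by definition (it is the first map of an associated weak conflation), so producing the factorization of $(b,-g)$ as a weak deflation is the whole task.

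To verify that $(b,-g)$ is a weak deflation, I factor it as
$$(b,-g) \;=\; (0,1_{D}) \circ \begin{pmatrix} 1_{B} & 0 \\ b & -g \end{pmatrix},$$
where $(0,1_{D}) \colon B \oplus D \to D$ is the split projection and hence a weak deflation, since split kernel-cokernel pairs belong to every weak kernel-cokernel structure by $(\WE_{0})$, $(\WE_{1})$, and $(\WE_{1})^{\rm{op}}$. For the remaining factor, I further decompose
$$\begin{pmatrix} 1_{B} & 0 \\ b & -g \end{pmatrix} \;=\; \begin{pmatrix} 1_{B} & 0 \\ b & 1_{D} \end{pmatrix} \circ (1_{B} \oplus (-g)),$$
noting that the left matrix is an isomorphism with inverse $\left(\begin{smallmatrix} 1_{B} & 0 \\ -b & 1_{D} \end{smallmatrix}\right)$, and the right factor is the direct sum of two weak deflations: $1_{B}$ by $(\WE_{1})^{\rm{op}}$, and $-g$ because $g$ is a weak deflation and $(\WE_{0})$ closure under isomorphism allows the cokernel of a weak conflation to be replaced by its post-composition with the iso $-1_{D}$. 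Closure of weak conflations under direct sums and under isomorphism (both from $(\WE_{0})$) then shows $\left(\begin{smallmatrix} 1_{B} & 0 \\ b & -g \end{smallmatrix}\right)$ is a weak deflation.

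By axiom $(\WE_{2})^{\rm{op}}$, the composition $(b,-g)$ is itself a weak deflation and so admits a weak kernel $\theta \colon A \to B \oplus C$; the discussion preceding the proposition then produces the pair $((0,1_{C})\theta, (1_{B},0)\theta)$ as a weak pullback of $b$ and $g$. The dual statement about weak pushouts follows by applying the same argument inside the opposite weak exact category. The main obstacle is really just spotting the matrix factorization that exhibits $(b,-g)$ as a composition of a split projection, an isomorphism, and a direct sum of weak deflations; once this decomposition is in hand, the closure properties built into the weak exact axioms complete the proof with no further work.
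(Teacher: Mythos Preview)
Your proof is correct and is essentially the same as the paper's: both reduce to showing that $(b,-g)$ is a weak deflation by exhibiting the identical three-term factorization $(0,1)\circ\left(\begin{smallmatrix}1&0\\b&1\end{smallmatrix}\right)\circ(1\oplus(-g))$ into a split projection, an isomorphism, and a direct sum with the given weak deflation, then invoking $(\WE_0)$, $(\WE_1)^{\rm op}$, and $(\WE_2)^{\rm op}$. The only difference is cosmetic: you insert the intermediate grouping $\left(\begin{smallmatrix}1&0\\b&-g\end{smallmatrix}\right)$, whereas the paper writes the three factors in a single display.
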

\begin{proof}
It sufficient to show that if $c\colon Y\to Z$ is a weak deflation, then so is
$(c',-c)\colon Y'\oplus Y\to Z$ for every morphism $c'\colon Y'\to Z$. But the morphism
$(c',-c)\colon Y'\oplus Y\to Z$ is a composition of the following morphisms
$$\xymatrix@C=30pt{
    Y'\oplus Y\ar[r]^(.5){\left(\begin{smallmatrix}1&~0 \\ 0&-c\end{smallmatrix}\right)}
     & Y'\oplus Z\ar[r]^(.45){\left(\begin{smallmatrix}1&0 \\ c'&1\end{smallmatrix}\right)}
       & Y'\oplus Z\ar[r]^(.55){(0,\,1)} & Z, }
  $$
which are all weak deflations by $\rm(WE_0)$ and $\rm(WE_1)^{\textrm{op}}$. It follows from $\rm(WE_2)^{\textrm{op}}$ that the composition \linebreak $(c',-c)\colon Y'\oplus Y\to Z$ is a weak deflation.
\end{proof}

This proposition implies that there is a commutative diagram
$$\xymatrix@R=30pt@C=30pt{
       X \ar[r]^{f}\ar@{=}[d]    & A \ar[r]^{a'}\ar[d]_{a}   & Y' \ar[d]^{c'} \\
       X \ar[r]^{k}\ar[d]_{k'}   & Y \ar[r]^{c}\ar[d]_{b}    & Z \ar@{=}[d] \\
       X'\ar[r]^{b'}             & B \ar[r]^{g}              & Z }
$$
with the middle row a weak conflation, $a'f = 0$ and $gb'=0$. That is why we call the structure $\mcWE$ weak exact.

\subsection{Minimal Salce's Lemma}
Let $\mcI$ be an ideal of $\mcA$, and $X\in\mcA$ an object. An $\mcI$-precover $i\colon T\to X$ is called an $\mcI$-{\em cover} if any endomorphism $f\colon T\to T$ completing the commutative diagram
$$\xymatrix@R=30pt@C=30pt{
	              & T \ar[d]^{i} \ar@{-->}[ld]_{f} \\
	 T \ar[r]^{i} & X }
$$
is an isomorphism. An ideal $\mcI$ is said to be {\em covering} if every object has an $\mcI$-cover; {\em enveloping ideals} are defined dually.

\begin{thm}{\em (Minimal Salce's lemma)}\label{T: min SL}
Let $(\mcI,\mcJ)$ be a complete ideal torsion pair in $(\mcA; \mcWE)$. Then $\mcI$ is covering if and only if $\mcJ$ is enveloping.
\end{thm}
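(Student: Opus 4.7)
The plan is to prove the forward implication ``$\mcI$ covering $\Rightarrow \mcJ$ enveloping''; the converse follows by the dual argument. The strategy is to use the covering hypothesis to place the $\mcI$-cover of each $X$ inside a weak conflation, and then invoke the Five Lemma axiom $(\WE_3)$ to deduce minimality of the corresponding $\mcJ$-preenvelope.

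Fix $X \in \mcA$. By completeness and Theorem~\ref{T: SL}, choose a weak conflation $T \xrightarrow{i} X \xrightarrow{j} F$ with $i$ a Hom-special $\mcI$-precover and $j \in \mcJ$ a Hom-special $\mcJ$-preenvelope. Let $i_0 \colon T_0 \to X$ be an $\mcI$-cover supplied by the hypothesis. The precover/cover comparison yields $s \colon T_0 \to T$ with $i_0 = is$ and $t \colon T \to T_0$ with $i = i_0 t$. From $i_0 = i_0(ts)$ and the cover property of $i_0$, the endomorphism $ts$ of $T_0$ is an automorphism; after rescaling $s$ we may take $ts = 1_{T_0}$, exhibiting $T_0$ as a retract of $T$.

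The next step is to transport the weak conflation from $T$ to $T_0$. Form the weak pushout of the weak inflation $i$ along $t$, which exists by Proposition~\ref{P: weak pb in wec}; it yields an object $P$, a weak inflation $T_0 \to P$, and a morphism $X \to P$ making the evident square commute. Since $i = i_0 t$, the pair $(i_0 \colon T_0 \to X, 1_X \colon X \to X)$ is a compatible cocone, so there is a canonical morphism $P \to X$. A direct check using $ts = 1_{T_0}$ shows that $P \to X$ and $X \to P$ are mutually inverse, so $P \cong X$. Closure of $\mcWE$ under isomorphism $(\WE_0)$ then supplies a weak conflation $T_0 \xrightarrow{i_0} X \xrightarrow{j_0} F_0$, and Theorem~\ref{T: SL} identifies $j_0 \in \mcJ$ as a Hom-special $\mcJ$-preenvelope of $X$.

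It remains to show that $j_0$ is a $\mcJ$-envelope. Given $g \colon F_0 \to F_0$ with $g j_0 = j_0$, the triple $(1_{T_0}, 1_X, g)$ is a morphism of the weak conflation $T_0 \to X \to F_0$ to itself, since both squares commute (the left trivially, the right by hypothesis). Applying the Five Lemma $(\WE_3)$ with $1_{T_0}$ and $1_X$ as isomorphisms forces $g$ to be an isomorphism, whence $j_0$ is a $\mcJ$-envelope and $\mcJ$ is enveloping. The main obstacle I anticipate is the transport step: one must verify rigorously the mutual-inverse relationship between $P \to X$ and $X \to P$ and then invoke $(\WE_0)$ correctly, so that the sequence obtained is honestly a distinguished weak conflation rather than a mere weak kernel-cokernel pair.
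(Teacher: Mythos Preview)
Your transport step has a genuine gap. Weak pushouts do not enjoy uniqueness of factorization, so from $vu = 1_X$, $(uv)u = u$ and $(uv)i' = i'$ you cannot conclude $uv = 1_P$; the usual cancellation argument needs a universal (not merely weak) colimit. Moreover, Proposition~\ref{P: weak pb in wec} only guarantees that a weak pushout of a weak inflation \emph{exists}; it does not assert that the pushed-out map $T_0 \to P$ is itself a weak inflation, which you also need.

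The good news is that the transport step is entirely unnecessary, and your own final paragraph shows why. Apply your Five-Lemma argument directly to the original weak conflation $T \xrightarrow{i} X \xrightarrow{j} F$ supplied by completeness: if $f \colon F \to F$ satisfies $fj = j$, then $(1_T, 1_X, f)$ is already a morphism of that weak conflation to itself, and $(\WE_3)$ forces $f$ to be an isomorphism. So $j$ is a $\mcJ$-envelope with no reference to the cover $i_0$ whatsoever. (The same trick with $(g, 1_X, 1_F)$ shows that any $g$ with $ig = i$ is an isomorphism, i.e.\ the Hom-special $\mcI$-precover $i$ is itself a cover; this retroactively justifies the paper's opening line.) Note that this actually shows completeness alone already makes $\mcJ$ enveloping and $\mcI$ covering under $(\WE_3)$ as stated.

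The paper argues differently: it begins by asserting a weak conflation in which $i$ is an $\mcI$-\emph{cover} (without explanation---your transport step was a reasonable attempt to supply one). Given $f$ with $fj = j$, it then completes to a morphism $(g, 1_X, f)$ of weak conflations and invokes the \emph{cover} property of $i$ to deduce that $g$ is an isomorphism, before applying $(\WE_3)$ to conclude the same for $f$. So the paper routes through the covering hypothesis via the left-hand vertical map, whereas your final step sidesteps it by taking that map to be $1_T$ from the outset.
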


\begin{proof}
Let $X$ be an object in $\mcA$, and consider a weak conflation
$$\xymatrix@R=30pt@C=30pt{
		T\ar[r]^{i} & X\ar[r]^{j} & F }
$$
with $i\colon T\to X$ an $\mcI$-cover and $j\colon X\to F$ a $\mcJ$-preenvelope of $X$. We need to show that $j$ is a $\mcJ$-envelope of $X$. Consider the following commutative diagram
$$\xymatrix@R=30pt@C=30pt{
		T\ar[r]^{i} & X\ar[r]^{j}\ar@{=}[d]   & F\ar@{-->}[d]^{f}\\
		            & X\ar[r]^{j}             & F, }
$$
where the morphism $f$ is obtained from the fact that $j\colon X\to F$ is a $\mcJ$-preenvelope of $X$. Completing this commutative diagram to the following
$$\xymatrix@R=30pt@C=30pt{
		T\ar[d]_{g}\ar[r]^{i} & X\ar[r]^{j}\ar@{=}[d] & F\ar[d]^{f}\\
		T\ar[r]^{i}           & X\ar[r]^{j}           & F.}
$$
The morphism $g\colon T\to T$ is an isomorphism since $i\colon T\to X$ is an $\mcI$-cover of $X$, and hence $f$ is an automorphism of $F$ by $\rm(WE_3)$. A dual argument shows that if $\mcJ$ is an enveloping ideal, then $\mcI$ is covering.
\end{proof}

An ideal torsion pair $(\mcI,\mcJ)$ is said to be {\em perfect} if $\mcI$ is covering and $\mcJ$ is enveloping.

\subsection{Christensen's Lemma}

For weak exact categories, Theorem \ref{T: prod perp equ} can be strengthened as follows. It tells us that the Galois connection $(\ell, r)$ between $\Hom$-special precovering ideals and $\Hom$-special preenveloping ideals replaces products with weak extensions of ideals and conversely. It is a $0$-dimensional analogue of \cite[Theorem 8.4]{FH} on $\Ext$-orthogonality of $\Ext$-special approximating ideals.

\begin{thm}{\em (Christensen's Lemma)}\label{T: FH in wec}
If $(\mcI_{1}, \mcJ_{1})$ and $(\mcI_{2}, \mcJ_{2})$ are two complete ideal torsion pairs in $(\mcA; \mcWE)$, then so are $(\mcI_{1}\mcI_{2}, {\mcJ_{2}}\diamond{\mcJ_{1}})$ and $({\mcI_{1}}\diamond{\mcI_{2}}, \mcJ_{2}\mcJ_{1})$.
\end{thm}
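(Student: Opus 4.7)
My plan is to prove that the first pair $(\mcI_1\mcI_2, \mcJ_2 \diamond \mcJ_1)$ is a complete ideal torsion pair; the second pair will follow by applying the dual argument in the opposite weak exact category $(\mcA; \mcWE)^{\rm op}$. The strategy is to construct, for each object $X \in \mcA$, a $\Hom$-special $\mcI_1\mcI_2$-precover of $X$, and then to invoke results already in the paper.

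Fix $X \in \mcA$. Using completeness of $(\mcI_1, \mcJ_1)$, first take a $\Hom$-special $\mcI_1$-precover $i_1 \colon T_1 \to X$, which is by definition a weak inflation. Applying completeness of $(\mcI_2, \mcJ_2)$ to the object $T_1$ yields a $\Hom$-special $\mcI_2$-precover $i'_2 \colon T_2 \to T_1$, also a weak inflation. Axiom $(\WE_2)$ then ensures that $i_1 i'_2 \colon T_2 \to X$ is a weak inflation, and it evidently lies in $\mcI_1 \mcI_2$. To verify the precover property, consider any $f = g_1 g_2$ with $g_1 \in \mcI_1$, $g_2 \in \mcI_2$, and $f$ landing in $X$: factor $g_1 = i_1 h$ using the precover $i_1$, observe that $h g_2 \in \mcI_2$ since $\mcI_2$ is an ideal, and then factor $h g_2 = i'_2 k$ using the precover $i'_2$, yielding $f = i_1 i'_2 k$. (This is essentially Proposition \ref{P: sum and prod}.) So $i_1 i'_2$ is a $\Hom$-special $\mcI_1 \mcI_2$-precover of $X$.

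With $\Hom$-special $\mcI_1\mcI_2$-precovers now available for every object, Proposition \ref{P: weak comp} delivers completeness of the ideal torsion pair $(r(\ell(\mcI_1\mcI_2)), \ell(\mcI_1\mcI_2))$. Theorem \ref{T: prod perp equ} identifies the right-hand side as $\ell(\mcI_1\mcI_2) = \mcJ_2 \diamond \mcJ_1$, so the remaining task is to verify $r(\mcJ_2 \diamond \mcJ_1) = \mcI_1\mcI_2$. The non-trivial inclusion is $r(\mcJ_2 \diamond \mcJ_1) \subseteq \mcI_1\mcI_2$: for any $i_0 \in r(\mcJ_2 \diamond \mcJ_1)$ with codomain $X$, the weak cokernel $g$ of the constructed precover $i_1 i'_2$ lies in $\mcJ_2 \diamond \mcJ_1$, hence $g \circ i_0 = 0$, and the weak kernel property of $i_1 i'_2$ factors $i_0$ through $i_1 i'_2 \in \mcI_1\mcI_2$, placing $i_0 \in \mcI_1\mcI_2$.

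The main obstacle is step two, ensuring that the composition of two $\Hom$-special precovers remains $\Hom$-special for the product ideal. The precover property itself is formal and rests only on $\mcI_2$ being an ideal, but the promotion of the composition to a weak inflation is exactly Axiom $(\WE_2)$, and this is where the weak exact hypothesis, as opposed to a bare weak kernel-cokernel structure, plays a decisive role.
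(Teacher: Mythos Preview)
Your proof is correct and follows essentially the same route as the paper: construct a $\Hom$-special $\mcI_1\mcI_2$-precover as the composition of a $\Hom$-special $\mcI_1$-precover with a $\Hom$-special $\mcI_2$-precover, using Proposition~\ref{P: sum and prod} for the precover property and axiom $(\WE_2)$ for the weak inflation property, and then appeal to Theorem~\ref{T: prod perp equ} for the identification $\ell(\mcI_1\mcI_2)=\mcJ_2\diamond\mcJ_1$. The only cosmetic difference is that the paper invokes the observation (stated just before Theorem~\ref{T: prod perp equ}) that every $\Hom$-special precovering ideal is automatically a right annihilator, whereas you route through Proposition~\ref{P: weak comp} and then spell out the verification $r(\mcJ_2\diamond\mcJ_1)\subseteq\mcI_1\mcI_2$ by hand; these are the same argument unpacked to different depths.
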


\begin{proof}
We only prove that $(\mcI_1\mcI_2, \mcJ_2\diamond\mcJ_1)$ is a complete ideal torsion pair, by showing that $\Hom$-special precovering ideals are closed under the product operation. Indeed, if $\mcI_1$ and $\mcI_2$ are $\Hom$-special precovering ideals and $X \in \mcA$ with a $\Hom$-special $\mcI_1$-precover $i_1 \colon T_1 \to X$, let $i_2 \colon T_2 \to T_1$ be a $\Hom$-special $\mcI_2$-precover of $T_1$, then $i_1 i_2 \colon T_2 \to X$ is an $\mcI_1 \mcI_2$-precover, which is a weak inflation, by Proposition~\ref{P: sum and prod} and $\rm(WE_2)$. It follows that $\mcI_1 \mcI_2$ is a $\Hom$-special precovering ideal.
\end{proof}

An ideal $\mcI$ is {\em closed under weak extensions} if $\mcI\diamond\mcI=\mcI$, and $\mcI$ is an {\em idempotent} ideal if $\mcI^2=\mcI$. The next result follows from Theorem \ref{T: FH in wec} immediately.

\begin{coro}\label{C: ide and clo ext}
Let $(\mcI, \mcJ)$ be a complete ideal torsion pair in $(\mcA; \mcWE)$. The ideal $\mcI$ is idempotent if and only if $\mcJ$ is closed under weak extensions; the ideal $\mcI$ is closed under weak extensions if and only if $\mcJ$ is idempotent.
\end{coro}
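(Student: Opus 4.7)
The plan is to obtain the corollary as a direct instance of Theorem~\ref{T: FH in wec}. Taking $(\mcI_{1}, \mcJ_{1}) = (\mcI_{2}, \mcJ_{2}) = (\mcI, \mcJ)$ in that theorem, the hypothesis is satisfied automatically since $(\mcI, \mcJ)$ is already complete, and the conclusion provides that both $(\mcI^{2}, \mcJ \diamond \mcJ)$ and $(\mcI \diamond \mcI, \mcJ^{2})$ are themselves complete ideal torsion pairs in $(\mcA; \mcWE)$.

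Next I would invoke the uniqueness built into Definition~\ref{D: itp}: by Theorem~\ref{T: orth and ann}, an ideal torsion pair $(\mcI, \mcJ)$ is determined by either of its sides via $\mcJ = \ell(\mcI)$ and $\mcI = r(\mcJ)$. Hence two ideal torsion pairs coincide as soon as one pair of corresponding sides does. Comparing $(\mcI, \mcJ)$ with $(\mcI^{2}, \mcJ \diamond \mcJ)$ therefore gives the equivalence $\mcI = \mcI^{2} \iff \mcJ = \mcJ \diamond \mcJ$, which is the first biconditional. Comparing $(\mcI, \mcJ)$ with $(\mcI \diamond \mcI, \mcJ^{2})$ yields $\mcI = \mcI \diamond \mcI \iff \mcJ = \mcJ^{2}$, which is the second.

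I would preface the deduction with the remark that the relevant inclusions $\mcI^{2} \subseteq \mcI \subseteq \mcI \diamond \mcI$ and $\mcJ^{2} \subseteq \mcJ \subseteq \mcJ \diamond \mcJ$ hold for every ideal (the first by closure of ideals under composition, the second because $\mcJ + \mcJ \subseteq \mcJ \diamond \mcJ$ as proved in the proposition preceding Corollary~\ref{C: prod perp inclusion}); this ensures that the statements ``$\mcI$ is idempotent'' and ``$\mcJ$ is closed under weak extensions'' genuinely correspond to the equalities $\mcI = \mcI^{2}$ and $\mcJ = \mcJ \diamond \mcJ$ needed above. No real obstacle arises: Theorem~\ref{T: FH in wec} does all the structural work, and the remainder is a formal manipulation of the Galois correspondence.
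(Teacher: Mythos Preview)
Your proposal is correct and matches the paper's approach exactly: the paper simply states that the result follows from Theorem~\ref{T: FH in wec} immediately, and you have spelled out the routine details of that deduction (specializing to $(\mcI_{1},\mcJ_{1})=(\mcI_{2},\mcJ_{2})=(\mcI,\mcJ)$ and using that each side of an ideal torsion pair determines the other).
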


Note that by \cite[Prposition 10.5]{FH}\label{lemma}, an idempotent covering ideal is an object ideal, and dually,  an idempotent enveloping ideal is an object ideal. If $(\mcI,\mcJ)$ is a complete ideal torsion pair in $(\mcA; \mcWE)$ with $\mcI$ a covering ideal closed under weak extensions, then $\mcJ$ is an idempotent enveloping ideal by Theorem \ref{T: min SL} and Corollary \ref {C: ide and clo ext}, and hence an object ideal. This proves the following result.

\begin{thm}\label{P: min ob ide} {\em (Wakamatsu's Lemma)}
Let $(\mcI,\mcJ)$ be a complete ideal torsion pair in $(\mcA; \mcWE)$. If $\mcI$ is a covering ideal closed under weak extensions, then $\mcJ$ is an object ideal.
\end{thm}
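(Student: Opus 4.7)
The plan is to chain together two biconditional results already established for complete ideal torsion pairs in a weak exact category and then invoke one external fact. The overall strategy is to first show that the hypotheses on $\mcI$ transfer, via the Galois-type correspondence underlying a complete ideal torsion pair, to dual properties of $\mcJ$: namely enveloping and idempotent. Once $\mcJ$ is known to be an idempotent enveloping ideal, the conclusion follows from the cited fact \cite[Proposition 10.5]{FH} (in its dual form) that an idempotent enveloping ideal is an object ideal.

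Concretely, my first step is to apply the Minimal Salce's Lemma (Theorem~\ref{T: min SL}) to the complete ideal torsion pair $(\mcI, \mcJ)$: since $\mcI$ is covering, we immediately conclude that $\mcJ$ is enveloping. The second step is to apply Corollary~\ref{C: ide and clo ext} to the same complete ideal torsion pair: since $\mcI$ is closed under weak extensions, the corollary delivers that $\mcJ$ is idempotent, i.e., $\mcJ^2 = \mcJ$. Combining these two, $\mcJ$ is an idempotent enveloping ideal, so the dual of \cite[Proposition 10.5]{FH} forces $\mcJ = \mcI(\Ob(\mcJ))$, which is exactly the assertion that $\mcJ$ is an object ideal.

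The real content of the proof is entirely packaged in previously established results, so there is no genuine technical obstacle to clear; the argument is a three-line assembly. The one point requiring care is simply to confirm the direction of each biconditional: Theorem~\ref{T: min SL} is symmetric in $\mcI$ and $\mcJ$, and Corollary~\ref{C: ide and clo ext} explicitly states both directions, so the passage from properties of $\mcI$ to the dual properties of $\mcJ$ is legitimate in each case. If one preferred a self-contained version that avoids citing \cite[Proposition 10.5]{FH}, one would choose any $X \in \Ob(\mcJ)$ (obtained from $1_X \in \mcJ = \mcJ^2$ via factorization through a $\mcJ$-envelope), and then use the uniqueness property in the definition of a $\mcJ$-envelope together with idempotence to show any $\mcJ$-envelope is a split monomorphism onto an object whose identity lies in $\mcJ$, making that object itself lie in $\Ob(\mcJ)$; this dualizes the standard Wakamatsu argument.
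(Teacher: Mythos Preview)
Your proposal is correct and matches the paper's own proof essentially line for line: the paper also deduces that $\mcJ$ is enveloping from Theorem~\ref{T: min SL}, that $\mcJ$ is idempotent from Corollary~\ref{C: ide and clo ext}, and then invokes the dual of \cite[Proposition 10.5]{FH} to conclude that $\mcJ$ is an object ideal.
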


\section{Complete Ideal Torsion Pairs in Abelian Categories}\label{S: Ab}
\textbf{In this section, we assume that $\mcA$ is an abelian category}. Then the collection of short exact sequences forms a weak exact structure on $\mcA$. Note that for a precovering ideal $\mcI$ in $\mcA$ and an object $X\in\mcA$, there is a short exact sequence
$$\xymatrix@R=30pt@C=30pt{
   0\ar[r]  & T \ar[r]^{i}  & X \ar[r]^{j}  & F \ar[r]  & 0, }
$$
where $i\colon T\to X$ is an $r(\ell(\mcI))$-cover and $j\colon X\to F$ an $\ell(\mcI)$-envelope of $X$. These considerations may be documented as follows.

\begin{thm}\label{T: SL in ab}
Let $\mcA$ be an abelian category. The following are equivalent for a precovering ideal $\mcI$ in $\mcA:$
\begin{enumerate}
\item $r(\ell(\mcI))=\mcI;$
\item every object $X\in\mcA$ has a monic $\mcI$-cover; and
\item the \Hom-orthogonal pair $(\mcI, \ell(\mcI))$ is a complete ideal torsion pair.
\end{enumerate}
\end{thm}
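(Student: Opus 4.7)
The plan is to exploit the construction described in the paragraph immediately preceding the theorem: given a precovering ideal $\mcI$ in $\mcA$ and any object $X$, factoring an arbitrary $\mcI$-precover $T_0 \to X$ through its image produces a short exact sequence
$$0 \longrightarrow T \stackrel{i}{\longrightarrow} X \stackrel{j}{\longrightarrow} F \longrightarrow 0$$
in which $i$ is a monic $r(\ell(\mcI))$-precover of $X$ and $j$ lies in $\ell(\mcI)$. I would then establish the equivalence by running the cycle (1) $\Rightarrow$ (3) $\Rightarrow$ (2) $\Rightarrow$ (1).

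For (1) $\Rightarrow$ (3), the hypothesis $r(\ell(\mcI)) = \mcI$ turns the morphism $i$ above into a monic $\mcI$-precover; since monics are exactly the weak inflations in the weak exact structure of short exact sequences, $i$ is $\Hom$-special. Combined with the ideal torsion pair identity $\mcI = r(\ell(\mcI))$ given by (1), this yields the completeness of $(\mcI, \ell(\mcI))$. For (3) $\Rightarrow$ (2), completeness provides, for every $X$, a $\Hom$-special, hence monic, $\mcI$-precover $i \colon T \to X$; I would then observe that any monic precover is automatically a cover, because any endomorphism $f$ of $T$ satisfying $if = i$ must equal $1_T$ by the monicity of $i$.

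For (2) $\Rightarrow$ (1), the containment $\mcI \subseteq r(\ell(\mcI))$ is automatic, so I focus on the reverse. Given $f \colon A \to X$ in $r(\ell(\mcI))$, I would pick a monic $\mcI$-cover $i \colon T \to X$ provided by (2) and let $j \colon X \to F$ be its cokernel. A direct use of the precover property shows that $j$ annihilates every morphism in $\mcI$ targeting $X$, so $j \in \ell(\mcI)$; hence $jf = 0$, which forces $f$ to factor through the monic $i$, and the ideal property of $\mcI$ then places $f$ in $\mcI$.

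The main (and essentially only nontrivial) obstacle is justifying the setup short exact sequence: that the image $T$ of an $\mcI$-precover $T_0 \twoheadrightarrow T \hookrightarrow X$ indeed lies in $r(\ell(\mcI))$ and provides a precover for that larger ideal, and that its cokernel $j$ lies in $\ell(\mcI)$. Each claim reduces to a short diagram chase in the abelian category $\mcA$: any $j \in \ell(\mcI)$ kills the original precover and hence its image, since $T_0 \twoheadrightarrow T$ is epic; any morphism into $X$ lying in $r(\ell(\mcI))$ is killed by $j$ and therefore factors through $\ker j = T$; and $j$ itself lies in $\ell(\mcI)$ because every $\mcI$-morphism into $X$ factors through $i$ by the precover property.
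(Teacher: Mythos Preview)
Your proposal is correct and follows essentially the same line as the paper: the paper records the theorem as a direct consequence of the short exact sequence obtained from the image factorization of an $\mcI$-precover (the paragraph preceding the statement), and your cycle (1) $\Rightarrow$ (3) $\Rightarrow$ (2) $\Rightarrow$ (1) simply spells out those ``considerations'' explicitly, including the observation that a monic precover is automatically a cover.
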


\subsection{Complete ideal torsion pairs and preradicals}

It can be easily observed that every complete ideal torsion pair in $\mcA$ is perfect. Let $(\mcI, \mcJ)$ be a complete ideal torsion pair in $\mcA$. The rule that assigns to every object $X\in\mcA$ the attending short exact sequence
$$\xymatrix@R=30pt@C=30pt{
     0\ar[r] & T(X)\ar[r]^(.55){i_{X}} & X \ar[r]^(.45){j_{X}} & F(X)\ar[r] &0 }
$$
with $i_{X}\in\mcI$ and $j_{X}\in\mcJ$, is functorial.  If $f\colon X\to Y$ is a morphism in $\mcA$, then there arises a morphism of the associated short exact sequences
$$\xymatrix@R=30pt@C=30pt{
     0\ar[r] & T(X)\ar[r]^(.55){i_{X}}\ar@{-->}[d]_{T(f)} & X\ar[r]^(.45){j_{X}}\ar[d]^{f}
                                            & F(X)\ar[r]\ar@{-->}[d]^{F(f)} & 0 \\
     0\ar[r] & T(Y)\ar[r]^(.55){i_{Y}}                    & Y\ar[r]^(.45){j_{Y}}
                                            & F(Y)\ar[r]                    & 0.}
$$
The morphism $T(f)\colon T(X)\to T(Y)$ exists, because $fi_{X}\colon T(X)\to Y$ belongs to the ideal $\mcI$ and $i_{Y}\colon T(Y)\to Y$ is an $\mcI$-cover of $Y$. Furthermore, $T(f)$ is the only morphism in $\Hom(T(X), T(Y))$ that makes the diagram commute, that is why it can be denoted as a function on $f$. The rule $X \mapsto T(X)$, $f \mapsto T(f)$, is also functorial, denoted by $T=T(\mcI)\colon \mcA\to \mcA$, and the morphisms $i_{X}\colon T(X)\to X$ constitute a natural transformation $i\colon T\to\Id$, where $\Id\colon \mcA\to \mcA$ denotes the identity functor on $\mcA$. The above argument also implies that for an object $X$ in $\mcA$, the associated short exact sequence is unique up to isomorphism of short exact sequences.

\begin{df}\label{D: preradical}
{\em (\cite[Chapter \uppercase\expandafter{\romannumeral6}. \S 1 and \S 2]{S})} A {\em preradical} of an abelian category $\mcA$ is an additive functor
$t\colon \mcA\to\mcA$ together with a natural transformation $\iota\colon t\to \Id$ such that for every object $X\in\mcA$, the associated morphism $\iota_{X}\colon t(X)\to X$ is a monomorphism in $\mcA$. The monomorphism $\iota_{X}$ is called the {\em torsion subobject} of $X$.
\end{df}

We have thus associated to a complete ideal torsion pair $(\mcI, \mcJ)$ in $\mcA$ the preradical $T=T(\mcI)$ of $\mcA$. A morphism $f\colon X\to Y$ belongs to the ideal $\mcI$ if and only if it factors through the torsion subobject $i_{Y}\colon T(Y)\to Y$ of the codomain, so we may infer that $\mcI$ is the ideal generated by the torsion subobjects $i_{X}$ of $X\in\mcA$.

\begin{thm}\label{T: preradical}
Let $\mcA$ be an abelian category. The rule
$(\mcI, \mcJ) \mapsto T(\mcI)$ is a bijective correspondence between complete ideal torsion pairs in $\mcA$ and preradicals of $\mcA$.
\end{thm}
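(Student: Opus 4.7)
The plan is to construct an inverse assignment sending a preradical $(t,\iota)$ to a complete ideal torsion pair $(\mcI_t, \ell(\mcI_t))$ and to verify that the two constructions are mutually inverse. Given a preradical $(t, \iota)$ of $\mcA$, I would define the ideal
$$\mcI_{t}(A, B) := \{\, f \colon A \to B \,\mid\, f \text{ factors through } \iota_{B} \,\}.$$
Closure under sum and under precomposition is immediate. Closure under postcomposition uses naturality of $\iota$: given $f = \iota_{B} g \in \mcI_{t}(A, B)$ and $h \colon B \to C$, one has $hf = h\iota_{B}g = \iota_{C} t(h) g$, which factors through $\iota_{C}$. Equivalently, $\mcI_{t}$ is the ideal generated by the family $\{\iota_{X} \mid X \in \mcA\}$.

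Next I would show that $(\mcI_t, \ell(\mcI_t))$ is a complete ideal torsion pair. For every object $X$, the canonical short exact sequence $0 \to t(X) \xrightarrow{\iota_X} X \xrightarrow{\pi_X} X/t(X) \to 0$ is a weak conflation in $\mcA$. By construction $\iota_X \in \mcI_t$ is an $\mcI_t$-precover of $X$, and $\pi_X \in \ell(\mcI_t)$ because any $f = \iota_X g \in \mcI_t$ composable with $\pi_X$ satisfies $\pi_X f = \pi_X \iota_X g = 0$. The universal property of the kernel then yields the nontrivial inclusion $r(\ell(\mcI_t)) \subseteq \mcI_t$: if $\pi_X f = 0$, then $f$ factors through $\ker \pi_X = \iota_X$, so $f \in \mcI_t$. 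Theorem~\ref{T: SL} together with Corollary~\ref{C: Hom special} then make $(\mcI_t, \ell(\mcI_t))$ a complete ideal torsion pair.

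For the two round-trips, starting from a complete ideal torsion pair $(\mcI, \mcJ)$ with associated preradical $T = T(\mcI)$ and torsion morphisms $i_X$, a morphism $f \colon A \to B$ lies in $\mcI$ if and only if it factors through the $\mcI$-cover $i_B$, so $\mcI_{T(\mcI)} = \mcI$ and thus $\ell(\mcI_{T(\mcI)}) = \mcJ$. Conversely, starting from a preradical $(t, \iota)$, the morphism $\iota_X$ is a monic $\mcI_t$-precover; monicity forces any endomorphism $g$ of $t(X)$ satisfying $\iota_X g = \iota_X$ to be the identity, so $\iota_X$ is in fact an $\mcI_t$-cover. Hence $T(\mcI_t)(X)$ is canonically identified with $t(X)$ via $\iota_X$, and by uniqueness of the induced morphisms appearing in the functoriality diagram, $T(\mcI_t)(h)$ must coincide with $t(h)$, yielding $T(\mcI_t) = t$ as preradicals.

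The main obstacle is establishing the identity $r(\ell(\mcI_t)) = \mcI_t$, which upgrades $\mcI_t$ from merely being $\Hom$-orthogonal to $\ell(\mcI_t)$ to being its actual ideal torsion pair partner. This is the step where the abelian hypothesis enters essentially, through factorization of morphisms killed by the cokernel $\pi_X$ through $\ker \pi_X = \iota_X$; it mirrors the argument leading up to Theorem~\ref{T: SL in ab}.
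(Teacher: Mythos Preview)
Your proposal is correct and follows essentially the same approach as the paper: you define the inverse assignment via the ideal of morphisms factoring through $\iota_{B}$, verify it is an ideal using naturality of $\iota$, check that each $\iota_{X}$ is a monic $\mcI_{t}$-cover, and conclude completeness. The only cosmetic difference is that the paper first defines $\mcI(t)$ as the ideal \emph{generated} by the $\iota_{X}$ and then proves it coincides with your factorization description, after which it invokes Theorem~\ref{T: SL in ab} rather than reproving the inclusion $r(\ell(\mcI_{t})) \subseteq \mcI_{t}$ by hand as you do.
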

\begin{proof} Let $t$ be a preradical of $\mcA$, and denote by $\mcI(t)$ the ideal generated by the torsion subobjects $\iota_{X}\colon t(X)\to X$ of $X\in \mcA$. We will prove that the $\Hom$-orthogonal pair $(\mcI(t), \ell(\mcI(t)))$ is a complete ideal torsion pair and that the preradical $T(\mcI(t))$ associated to it is $t$ itself. To that end, it suffices to verify that for every object $X\in\mcA$, the torsion subobject $\iota_{X}\colon t(X)\to X$ is an $\mcI(t)$-cover of $X$. Theorem \ref{T: SL in ab} then implies that $(\mcI(t), \ell(\mcI(t)))$ is a complete ideal torsion pair, and the foregoing considerations imply that the preradical associated to it is given by $t$.

Consider the collection $\mcI'(t)$ of those morphisms $f\colon X\to Y$ in $\mcA$ that factor through the torsion subobject $\iota_{Y}\colon t(Y)\to Y$ of the codomain. Evidently, $\mcI'(t) \subseteq \mcI(t)$ and $\mcI'(t)$ contains every torsion subobject $\iota_{X}\colon t(X)\to X$ of $X\in\mcA$. But $\mcI'(t)$ is itself an ideal, for it is clearly closed under the sum operation and composition on the right by an arbitrary morphism in $\mcA$, and to prove that it is closed under composition from the left, suppose that $f\colon X\to Y$ belongs to $\mcI'(t)$ and that $g\colon Y\to Z$ is an arbitrary morphism in $\mcA$. The following commutative diagram arises
$$\xymatrix@R=30pt@C=30pt{
                                          & X\ar[ld]\ar[d]^{f}\\
      t(Y)\ar[r]^{\iota_{Y}}\ar[d]_{t(g)} & Y \ar[d]^{g} \\
      t(Z)\ar[r]^{\iota_{Z}}              & Z }
$$
and indicates how the composition $gf$ too factors through the torsion subobject of the codomain.
It follows that $\mcI'(t)=\mcI(t)$. By the definition of $\mcI'(t)$, the torsion subobject $\iota_{X}\colon t(X)\to X$ is an $\mcI(t)$-cover of $X$ in $\mcA$.
\end{proof}

According to the proof of Theorem \ref{T: preradical}, the associated preradical of a complete ideal torsion pair $(\mcI, \mcJ)$ is some $t$ if and only if the domain of the $\mcI$-cover $i_{X}\colon T(X)\to X$ is isomorphic to $t(X)$ for every object $X\in\mcA$. Let $t_{1}$ and $t_{2}$ be two preradicals of $\mcA$. One may form the preradicals $t_{1}+t_{2}$, $t_{1} \cap t_{2}$ and $t_{1}t_{2}$ by assigning to each object $X\in\mcA$ the subobjects $t_{1}(X)+t_{2}(X)$, $t_{1}(X) \cap t_{2}(X)$ and $t_{1}(t_{2}(X))$, respectively. Recall that one defines the preradical $t_{1}\colon\!t_{2}$ as
$$(t_{1}\colon\!t_{2})(X)/t_{1}(X)=t_{2}(X/t_{1}(X)),$$
for every object $X$ in $\mcA$ \cite[Chapter \uppercase\expandafter{\romannumeral6}. \S 1]{S}. Combining with Corollary \ref{C: int and sum in wac} and Theorem \ref{T: FH in wec}, we have the following.

\begin{thm}\label{P: preradicals}
Let $(\mcI_{1}, \mcJ_{1})$ and $(\mcI_{2}, \mcJ_{2})$ be two complete ideal torsion pairs in $\mcA$, and let $t_{1}$ and $t_{2}$ be the preradicals associated to them respectively. Then
\begin{enumerate}
\item $t_{1}+t_{2}$ is the associated preradical of the compete ideal torsion pair $(r(\mcJ_{1}\cap\mcJ_{2}), \mcJ_{1}\cap\mcJ_{2});$
\item $t_{1} \cap t_{2}$ is the associated preradical of the complete ideal torsion pair $(\mcI_{1}\cap\mcI_{2},\ell(\mcI_{1}\cap\mcI_{2}));$
\item $t_{2}t_{1}$ is the associated preradical of the complete ideal torsion pair $({\mcI_{1}}{\mcI_{2}},{\mcJ_{2}}\diamond{\mcJ_{1}});$ and
\item  $t_{1}\colon\!t_{2}$ is the associated preradical of the complete ideal torsion pair $(\mcI_{1}\diamond\mcI_{2}, \mcJ_{2}\mcJ_{1})$.
\end{enumerate}
\end{thm}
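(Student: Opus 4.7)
The plan is to invoke the bijection of Theorem~\ref{T: preradical}, which identifies the preradical associated with a complete ideal torsion pair $(\mcI,\mcJ)$ as the assignment $X\mapsto T(X)$ sending each object to the domain of its monic $\mcI$-cover. All four ideal torsion pairs in the statement are already known to be complete, parts~(1) and~(2) by Corollary~\ref{C: int and sum in wac} and parts~(3) and~(4) by Theorem~\ref{T: FH in wec}, so each has an associated preradical and it remains to identify it. Throughout I will use the fact that in an abelian category any two monic $\mcI$-precovers of a given object have the same image as a subobject, so it suffices in each case to exhibit a monic $\mcI$-precover whose domain realizes the claimed torsion subobject.

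For part~(2), the composite $t_{1}(X)\cap t_{2}(X)\hookrightarrow t_{i}(X)\hookrightarrow X$ lies in $\mcI_{i}$ for $i=1,2$, hence in $\mcI_{1}\cap\mcI_{2}$; conversely any $f\in\mcI_{1}\cap\mcI_{2}$ with codomain $X$ factors through each monic cover $t_{i}(X)\hookrightarrow X$ and therefore through their intersection. This realizes $t_{1}(X)\cap t_{2}(X)\hookrightarrow X$ as the monic $(\mcI_{1}\cap\mcI_{2})$-cover. For part~(1), I first check that the inclusion $t_{1}(X)+t_{2}(X)\hookrightarrow X$ belongs to $r(\mcJ_{1}\cap\mcJ_{2})$: any $j\in\mcJ_{1}\cap\mcJ_{2}=\ell(\mcI_{1})\cap\ell(\mcI_{2})$ with domain $X$ annihilates both $t_{k}(X)\hookrightarrow X\in\mcI_{k}$ and hence their sum. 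Next, its cokernel $q\colon X\to X/(t_{1}(X)+t_{2}(X))$ lies in $\ell(\mcI_{k})=\mcJ_{k}$ for $k=1,2$ (any $i\in\mcI_{k}$ with codomain $X$ factors through $t_{k}(X)\hookrightarrow X$ and is killed by $q$), so $q\in\mcJ_{1}\cap\mcJ_{2}$. The short exact sequence $0\to t_{1}(X)+t_{2}(X)\to X\to X/(t_{1}(X)+t_{2}(X))\to 0$ is a weak conflation with left term in $r(\mcJ_{1}\cap\mcJ_{2})$ and right term in $\mcJ_{1}\cap\mcJ_{2}$, so Theorem~\ref{T: SL} identifies the left map as an $r(\mcJ_{1}\cap\mcJ_{2})$-precover; being monic, it is the cover.

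For part~(3), applying the product construction of Proposition~\ref{P: sum and prod} to the monic covers $i_{1}\colon t_{1}(X)\hookrightarrow X$ and $i_{2}\colon t_{2}(t_{1}(X))\hookrightarrow t_{1}(X)$ yields a monic $\mcI_{1}\mcI_{2}$-precover $i_{1}i_{2}\colon t_{2}(t_{1}(X))\hookrightarrow X$, whence the torsion subobject is $(t_{2}t_{1})(X)$. Part~(4) is dual: the composition $X\twoheadrightarrow X/t_{1}(X)\twoheadrightarrow (X/t_{1}(X))/t_{2}(X/t_{1}(X))$ of the $\mcJ_{1}$-envelope of $X$ with the $\mcJ_{2}$-envelope of $X/t_{1}(X)$ is an epic $\mcJ_{2}\mcJ_{1}$-preenvelope by the dual of Proposition~\ref{P: sum and prod}, hence the envelope, and its kernel is by definition $(t_{1}\colon\! t_{2})(X)$. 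The main obstacle is in parts~(1) and~(2), where the universal property of the candidate monomorphism must be established directly rather than through a product formula; once this is done, parts~(3) and~(4) follow immediately from the already-known product constructions together with the fact that a composition of two monomorphisms is monic.
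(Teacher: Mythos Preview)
Your proof is correct and follows essentially the same strategy as the paper's: identify in each case the monic $\mcI$-cover (equivalently, the epic $\mcJ$-envelope) of an arbitrary object $X$ and read off the torsion subobject. The only differences are cosmetic. For~(1) and~(2) the paper routes through the maps $(i_{1},i_{2})\colon T_{1}\oplus T_{2}\to X$ and $\left(\begin{smallmatrix}j_{1}\\ j_{2}\end{smallmatrix}\right)\colon X\to F_{1}\oplus F_{2}$ and then takes image/kernel, whereas you verify the precover/preenvelope properties of the candidate subobject inclusions directly; both arguments amount to the same verification. For~(4) the paper draws out the $3\times3$ diagram with the short exact sequence $0\to T_{1}\to T\to t_{2}(X/T_{1})\to 0$ to identify $T=(t_{1}\colon\! t_{2})(X)$, while you simply observe that the kernel of the composite envelope $X\twoheadrightarrow X/t_{1}(X)\twoheadrightarrow (X/t_{1}(X))/t_{2}(X/t_{1}(X))$ is $(t_{1}\colon\! t_{2})(X)$ by the very definition of $t_{1}\colon\! t_{2}$; this is the same computation with the diagram suppressed. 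For~(3) both proofs are identical.
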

\begin{proof}
For an object $X\in\mcA$ and $s=1,2$, we let
$$\xymatrix@R=30pt@C=30pt{
     0\ar[r] & T_{s}\ar[r]^{i_{s}} & X \ar[r]^{j_{s}} & F_{s}\ar[r] & 0 }
$$
be the associated short exact sequence of $X$ related to the complete ideal torsion pair $(\mcI_{s}, \mcJ_{s})$, which is to say that $t_{s}(X)=T_{s}$. \vspace{.3em}

$(1)$ The canonical morphism $(i_{1},i_{2})\colon  T_{1}\oplus T_{2} \to X$ then induces a right exact sequence
$$\xymatrix@R=30pt@C=30pt{
      T_{1}\oplus T_{2}\ar[r]^(.6){(i_{1},\,i_{2})} & X \ar[r]^{j} & F\ar[r]  & 0, }
$$
where $j\colon X\to F$ is a $\mcJ_{1}\cap\mcJ_{2}$-envelope of $X$. This gives rise to a short exact sequence
$$\xymatrix@R=30pt@C=30pt{
     0\ar[r] & T_{1}+T_{2}\ar[r]^(.6){i} & X \ar[r]^{j} & F\ar[r] & 0 }
$$
related to the complete ideal torsion pair $(r(\mcJ_{1}\cap\mcJ_{2}), \mcJ_{1}\cap\mcJ_{2})$. Since $T_{1}+T_{2}=(t_{1}+t_{2})(X)$, we conclude that $t_{1}+t_{2}$ is the associated preradical of $(r(\mcJ_{1}\cap\mcJ_{2}), \mcJ_{1}\cap\mcJ_{2})$. \vspace{.3em}

$(2)$ The associated short exact sequence of $X$, which is related to the complete ideal torsion pair $(\mcI_{1}\cap\mcI_{2}, \ell(\mcI_{1}\cap\mcI_{2}))$, is obtained by taking the cokernel of the kernel of
$\left(\begin{smallmatrix}j_{1}\\j_{2}\end{smallmatrix}\right)\colon X\to F_{1}\oplus F_{2}$,
and so is of the form
$$\xymatrix@R=30pt@C=30pt{
     0\ar[r] & T_{1}\cap T_{2}\ar[r]^(.6){i'} & X \ar[r]^{j'} & F'\ar[r] & 0. }
$$
It follows that the associated preradical of $(\mcI_{1}\cap\mcI_{2},
\ell(\mcI_{1}\cap\mcI_{2}))$ is $t_{1}\cap t_{2}$. \vspace{.3em}

$(3)$ Trivial.\vspace{.3em}

$(4)$ Let $t$ be the associated preradical of $(\mcI_{1}\diamond\mcI_{2}, \mcJ_{2}\mcJ_{1})$. For the object $F_{1}$, there is a short exact sequence
$$\xymatrix@R=30pt@C=30pt{
     0\ar[r] & T'_{2}\ar[r]^{i'_{2}} & F_{1} \ar[r]^{j'_{2}} & F'_{2}\ar[r] & 0, }
$$
which is related to $(\mcI_{2}, \mcJ_{2})$, and so $T'_{2}=t_{2}(F_{1})=t_{2}(X/t_{1}(X))$. Then we have the following commutative diagram
$$\xymatrix@R=30pt@C=30pt{
              &                        & 0\ar[d]      & 0\ar[d]\\
      0\ar[r] & T_{1}\ar[r]\ar@{=}[d]  & T\ar[d]^{h}\ar[r]
       & T'_{2}\ar[r]\ar[d]^{i'_{2}}                  & 0 \\
      0\ar[r] & T_{1}\ar[r]^{i_{1}}    & X\ar[d]^{j'_{2}j_{1}}\ar[r]^{j_{1}}
       & F_{1}\ar[r]\ar[d]^{j'_{2}}                   & 0 \\
              &                        & F'_{2}\ar[d]\ar@{=}[r]
       & F'_{2}\ar[d]\\
              &                        & 0            & 0 }
$$
with exact rows and columns. Note that the second column is the associated short exact sequence of $X$ related to the complete ideal torsion pair $(\mcI_{1}\diamond\mcI_{2}, \mcJ_{2} \mcJ_{1})$. It follows that $t(X)=T$, and hence $t(X)/t_{1}(X)=T'_{2}=t_{2}(X/t_{1}(X))$. Therefore $t=t_{1}\colon\!t_{2}$.
\end{proof}

Let $(\mcI,\mcJ)$ be a complete ideal torsion pair in $\mcA$, and $X$ an object of $\mcI$. In that case, the identity morphism $1_{X}:X\to X$ is an $\mcI$-cover of $X$. It means that if $t$ is the associated preradical, then the torsion subobject $\iota_{X}:t(X)\to X$ of $X$ is given by $1_{X}$. Thus $t(X)=X$ and we conclude that the objects of $\mcI$ are the {\em torsion objects} \cite[Chapter \uppercase\expandafter{\romannumeral6}. \S 2]{S} of $\mcA$ associated to the preradical $t$. Dually, suppose that $Y$ is an object of the ideal $\mcJ$. Then $1_{Y}\colon Y\to Y$ is a $\mcJ$-envelope of $Y$ and the torsion subobject $\iota_{Y}\colon t(Y)\to Y$ of $Y$ appears in the short exact sequence
$$\xymatrix@R=30pt@C=30pt{
     0\ar[r] & t(Y)\ar[r]^(.55){\iota_{Y}} & Y \ar[r]^{1_{Y}} & Y\ar[r] & 0 }
$$
as its kernel. Equivalently, $t(Y)=0$ and the objects of $\mcJ$ are the {\em torsion-free objects} (loc. cit.) of $\mcA$ associated to the preradical $t$.

A preradical $t$ is {\em idempotent} if $tt=t$, and is called a {\em radical} if $t\colon\!t=t$ \cite[Chapter \uppercase\expandafter{\romannumeral6}. \S 1]{S}. Let $t$ be a preradical of $\mcA$ and $(\mcI, \mcJ)$ the corresponding complete ideal torsion pair. Then $t$ is idempotent if and only if $\mcI$ is an idempotent ideal, and $t$ is a radical if and only if $\mcI$ is closed under weak extensions. The next result follows from \cite[Prposition 10.5]{FH} and Corollary \ref{C: ide and clo ext} directly, which implies that for a complete ideal torsion pair $(\mcI, \mcJ)$ in $\mcA$, both $\mcI$ and $\mcJ$ are object ideals if and only if the associated preradical $t$ is an idempotent radical. In that case, the pair $(\Ob(\mcI), \Ob(\mcJ))$ of subcategories constitutes a torsion theory for $\mcA$ in the classical sense of \cite[Chapter \uppercase\expandafter{\romannumeral6}. \S 2]{S}.

\begin{prop}\label{P: rad}
Let $(\mcI, \mcJ)$ be a complete ideal torsion pair in $\mcA$, and $t$ the associated preradical. The ideal $\mcI$ is an object ideal if and only if $t$ is idempotent$\,;$ the ideal $\mcJ$ is an object ideal if and only if $t$ is a radical.
\end{prop}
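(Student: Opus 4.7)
The plan is to assemble this proposition from facts already gathered just above the statement. The key ingredients are: (a) every complete ideal torsion pair in an abelian category is perfect, so in our setting $\mcI$ is a covering ideal and $\mcJ$ is an enveloping ideal; (b) the cited \cite[Proposition 10.5]{FH}, which asserts that an idempotent covering ideal is automatically an object ideal and, dually, that an idempotent enveloping ideal is an object ideal; (c) the two equivalences recorded in the paragraph preceding the proposition, namely that $t$ is idempotent iff $\mcI$ is idempotent, and $t$ is a radical iff $\mcI$ is closed under weak extensions; and (d) Corollary \ref{C: ide and clo ext}, which interchanges idempotency and closure under weak extensions between $\mcI$ and $\mcJ$.

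For the first equivalence, I would first observe that any object ideal is automatically idempotent, since the identity $1_X$ of any $X\in\Ob(\mcI)$ factors as $1_X\circ 1_X$. Hence $\mcI$ being an object ideal implies $\mcI$ is idempotent, which by (c) gives that $t$ is idempotent. Conversely, if $t$ is idempotent, then $\mcI$ is idempotent by (c); combined with the covering property from (a), ingredient (b) yields that $\mcI$ is an object ideal.

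For the second equivalence, the same trivial observation gives that $\mcJ$ being an object ideal forces $\mcJ$ to be idempotent; by (d) this is equivalent to $\mcI$ being closed under weak extensions, which by (c) means $t$ is a radical. Conversely, if $t$ is a radical, then $\mcI$ is closed under weak extensions by (c), so $\mcJ$ is idempotent by (d); combined with the enveloping property from (a), the dual half of (b) shows that $\mcJ$ is an object ideal.

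Since every step is a direct invocation of a previously established equivalence, no genuine obstacle arises; the argument is purely a bookkeeping combination, carried out once and then dualized. The only non-trivial input is \cite[Proposition 10.5]{FH}, which is used here as a black box.
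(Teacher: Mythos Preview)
Your proposal is correct and follows exactly the route the paper indicates: the paper states that the result ``follows from \cite[Proposition 10.5]{FH} and Corollary \ref{C: ide and clo ext} directly'' without spelling out the details, and you have unpacked precisely those two ingredients together with the perfectness of complete ideal torsion pairs in abelian categories and the equivalences recorded in the paragraph just before the proposition. There is nothing to add.
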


\subsection{Image maximal ideals}\label{S:im ideals}
Note that in an abelian category $\mcA$, every morphism $f\colon Y\to X$ factors through its image as $Y\stackrel{}{\to}\im f \stackrel{} {\rightarrow}X$. For an ideal $\mcI$ in $\mcA$ and $X\in\mcA$, we consider the system of all subobjects of the form $\im f$, with $f$ a morphism in $\mathcal{I}$ ending in $X$. It is directed, since any two morphisms $f,g \in \mathcal{I}$ ending in $X$ induce a morphism $h\in \mathcal{I}$ with $\im h = \im f + \im g$.

\begin{df} \label{D: im max}
An ideal $\mcI$ is {\em image maximal} if for every object $X\in\mcA$, the system $\im f$ with $f\in \mathcal{I}$ ending in $X$ has a maximal element.
\end{df}

For example, if $\mcI$ is a precovering ideal, then $\mcI$ is image maximal as a maximal element equals the image of an $\mcI$-precover of $X$. Hence the following proposition may be seen as a strengthening of Proposition \ref{P: precover complete} for abelian categories.

\begin{prop}\label{P: im max complete}
If $\mcI$ is an image maximal ideal in $\mcA$, then the ideal $r(\ell(\mcI))$ is monic covering in $\mcA$. Further, an $r(\ell(\mcI))$-cover of $X\in\mcA$ is given by an inclusion $\iota\colon T\to X$ with $T=\im f$ and $f\in\mcI(-,X)$.
\end{prop}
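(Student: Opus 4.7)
The plan is to construct, for each $X \in \mcA$, the candidate cover directly from image maximality and then verify it has the required property by producing an appropriate morphism in $\ell(\mcI)$. Fix $X$ and, by image maximality, choose $f \colon Z \to X$ in $\mcI$ for which $T := \im f$ is the maximum of the directed system of images of morphisms in $\mcI(-,X)$. Factor $f = \iota \circ p$ with $p \colon Z \to T$ the canonical epimorphism and $\iota \colon T \to X$ the inclusion. To see that $\iota \in r(\ell(\mcI))$, take any $j \in \ell(\mcI)$ with domain $X$; by definition $jf=0$, so $j\iota p = 0$, and since $p$ is an epimorphism this forces $j\iota = 0$.

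The crux of the argument, and the main obstacle, is to show that the quotient $q \colon X \to X/T$ lies in $\ell(\mcI)$. Given any $h \colon A \to X$ in $\mcI$, I would form the combined morphism $(f,h) \colon Z \oplus A \to X$, which belongs to $\mcI$ because $\mcI$ is closed under composition and addition, and note that its image equals $\im f + \im h = T + \im h$. Since $T$ is maximal in the directed system of such images, one must have $T + \im h = T$, i.e.\ $\im h \subseteq T = \ker q$, hence $qh = 0$. This is precisely where image maximality is used in an essential way.

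Having established $q \in \ell(\mcI)$, the precovering property follows immediately: if $i' \colon T' \to X$ is any morphism in $r(\ell(\mcI))$, then $qi' = 0$, so by the universal property of $T = \ker q$, $i'$ factors uniquely through $\iota$. Because $\iota$ is monic, any endomorphism $\alpha \colon T \to T$ satisfying $\iota\alpha = \iota$ must equal $1_T$, so the precover is automatically a cover. Thus every $X \in \mcA$ admits a monic $r(\ell(\mcI))$-cover of the prescribed form $T = \im f$ with $f \in \mcI(-,X)$, proving the proposition.
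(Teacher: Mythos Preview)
Your proof is correct and follows essentially the same route as the paper's: choose a maximal image $T=\im f$, show the inclusion $\iota\colon T\to X$ lies in $r(\ell(\mcI))$ via the epi--mono factorization of $f$, show the quotient $q\colon X\to X/T$ lies in $\ell(\mcI)$ using maximality, and conclude that $\iota$ is a monic $r(\ell(\mcI))$-cover. The only cosmetic difference is that you spell out the directedness argument via $(f,h)$ explicitly, whereas the paper invokes it tacitly by quantifying over all $f\in\mcI(-,X)$.
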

\begin{proof}
Consider the ideal torsion pair $(r(\ell(\mathcal{I})), \ell(\mathcal{I}))$ generated by $\mathcal{I}$. For an object $X\in\mathcal{A}$, let $T$ be the maximal subobject of $X$ with $T=\im f$ and $f\in\mcI(-,X)$. Because $r(\ell(\mathcal{I}))$ is a right annihilator ideal, it follows that the inclusion $\iota\colon T\to X$ belongs to $r(\ell(\mathcal{I}))$. Now the epimorphism $\pi\colon X \to X/T$ belongs to $\ell(\mathcal{I})$ as $\pi f=0$ for all $f\in\mcI(-,X)$. Thus if $g\colon  M\to X$ belongs to $r(\ell(\mathcal{I}))$, then $\pi g=0$ and $\im g \subseteq T$. It follows that $\iota\colon T\to X$ is a monic $r(\ell(\mathcal{I}))$-cover of $X$. Theorem \ref{T: SL in ab} then implies that $(r(\ell(\mathcal{I})), \ell(\mathcal{I}))$ is complete.
\end{proof}

In many cases the abelian category $\mcA$ fulfills the {\em subobject condition}, that is, for every object $X\in\mcA$, the directed system of subobjects of $X$ has a maximal element. Clearly, in that case every ideal $\mcI$ in $\mcA$ is image maximal.

\begin{coro}\label{C: subobj comp}
If $\mcA$ fulfills the subobject condition, then every ideal torsion pair in $\mcA$ is complete.
\end{coro}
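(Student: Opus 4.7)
The plan is to observe that this corollary is essentially an immediate consequence of Proposition~\ref{P: im max complete} together with the characterization of ideal torsion pairs in terms of the $\ell$ and $r$ operators (Theorem~\ref{T: orth and ann}). The subobject condition is a very strong finiteness hypothesis, so the work is mostly bookkeeping.

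First, I would fix an ideal torsion pair $(\mcI,\mcJ)$ in $\mcA$. By Theorem~\ref{T: orth and ann}, we have $\mcJ=\mcI^{\perp}=\ell(\mcI)$ and $\mcI={}^{\perp}\mcJ=r(\mcJ)=r(\ell(\mcI))$. Hence it suffices to show that the ideal torsion pair $(r(\ell(\mcI)),\ell(\mcI))$ generated by $\mcI$ is complete.

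Next, I would verify the hypothesis of Proposition~\ref{P: im max complete}. As the paragraph preceding the corollary points out, the subobject condition on $\mcA$ forces every directed system of subobjects of a given object to have a maximal element; since for any ideal $\mcI$ and any object $X$ the system $\{\im f\mid f\in\mcI(-,X)\}$ is directed, this maximal element exists, so $\mcI$ is image maximal (Definition~\ref{D: im max}). Applying Proposition~\ref{P: im max complete} then gives completeness of $(r(\ell(\mcI)),\ell(\mcI))$, and combining with the previous paragraph yields completeness of $(\mcI,\mcJ)$, as desired.

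There is no real obstacle here; the only point that deserves care is the first step, where one has to remember that an ideal torsion pair equals the ideal torsion pair generated by either of its components, so that Proposition~\ref{P: im max complete} applies directly. In particular, one does not need to separately verify that $\mcJ$ is $\Hom$-special preenveloping: completeness of $(r(\ell(\mcI)),\ell(\mcI))$ already encodes both sides by Corollary~\ref{C: Hom special}.
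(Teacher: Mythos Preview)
Your proposal is correct and follows exactly the route the paper intends: the subobject condition makes every ideal image maximal, so Proposition~\ref{P: im max complete} applies, and the identification $(\mcI,\mcJ)=(r(\ell(\mcI)),\ell(\mcI))$ from Theorem~\ref{T: orth and ann} finishes the argument. The paper leaves this implicit (it only records the sentence ``Clearly, in that case every ideal $\mcI$ in $\mcA$ is image maximal''), so your write-up simply makes the two-line deduction explicit.
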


The next result is the analogue of \cite[Theorem 2]{FGHT} for ideal torsion pairs in an abelian category.

\begin{prop}\label{P: ob in ab}
Let $\mcI$ be an image maximal ideal in $\mcA$. If $\mcI$ is an object ideal, then so is $r(\ell(\mcI))$. Further, the subcategory of objects of $r(\ell(\mcI))$ is the closure of $\Ob(\mcI)$ under quotient objects.
\end{prop}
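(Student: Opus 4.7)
The plan is to exploit the monic cover supplied by Proposition~\ref{P: im max complete} together with the object ideal hypothesis to identify $\Ob(r(\ell(\mcI)))$ concretely as the closure of $\Ob(\mcI)$ under quotients, and then to use that identification to deduce that $r(\ell(\mcI))$ is an object ideal.

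First, I would establish the easy inclusion: every quotient of an object in $\Ob(\mcI)$ belongs to $\Ob(r(\ell(\mcI)))$. Indeed, given an epimorphism $\pi\colon Y \to T$ with $Y \in \Ob(\mcI)$, the morphism $\pi = 1_Y \circ \pi$ lies in $\mcI$. So for any $j \in \ell(\mcI)$ with $(j, 1_T)$ composable, $j\pi = 0$ and epicness of $\pi$ forces $j = 0$. Hence $1_T \in r(\ell(\mcI))$, i.e., $T \in \Ob(r(\ell(\mcI)))$.

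Next I would apply Proposition~\ref{P: im max complete}: for any object $X \in \mcA$, the $r(\ell(\mcI))$-cover of $X$ is a monomorphism $\iota\colon T \to X$ with $T = \im f$ for some $f \in \mcI(-, X)$. Since $\mcI$ is an object ideal, $f$ factors as $W \xrightarrow{h} Y \xrightarrow{g} X$ with $Y \in \Ob(\mcI)$; maximality of $T$ in the directed system of images then gives $T = \im f \subseteq \im g \subseteq T$, so $T = \im g$. Consequently, $T$ is a quotient of $Y \in \Ob(\mcI)$ (via the epi–mono factorization of $g$), so $T \in \Ob(r(\ell(\mcI)))$ by the previous step.

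With $T \in \Ob(r(\ell(\mcI)))$ in hand, the object ideal claim for $r(\ell(\mcI))$ is immediate: any morphism $h \in r(\ell(\mcI))$ with codomain $X$ factors through the precover $\iota\colon T \to X$, and $T$ is an object of $r(\ell(\mcI))$; hence $r(\ell(\mcI)) = \mcI(\Ob(r(\ell(\mcI))))$. For the reverse inclusion in the quotient description, if $X \in \Ob(r(\ell(\mcI)))$, then $1_X \in r(\ell(\mcI))$ factors through the monic cover $\iota\colon T \to X$, so $\iota$ is both monic and epic, hence an isomorphism; therefore $X \cong T$ is a quotient of some $Y \in \Ob(\mcI)$. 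Combining both directions identifies $\Ob(r(\ell(\mcI)))$ with the closure of $\Ob(\mcI)$ under quotient objects.

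The only genuinely delicate point is the maximality argument that forces $T = \im g$ in the middle paragraph; everything else is a straightforward application of the monic-cover description together with the factorization property of object ideals.
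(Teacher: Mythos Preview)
Your proof is correct and follows essentially the same route as the paper's: both arguments invoke the monic cover from Proposition~\ref{P: im max complete} and the object-ideal factorization to show that every morphism in $r(\ell(\mcI))$ factors through a quotient of an object of $\Ob(\mcI)$, from which the two claims follow. (One minor slip: the composition ``$\pi = 1_Y \circ \pi$'' should read $\pi = \pi \circ 1_Y$, which is what puts $\pi \in \mcI$.)
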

\begin{proof}
Let $X$ be an object in $\mcI$ and $\pi\colon X\to Y$ an epimorphism.  Then $1_{X}\in\mcI$ implies $\pi\in\mcI \subseteq r(\ell(\mcI))$ and because $r(\ell(\mcI))$ is a right annihilator, it follows that $1_{Y}\in r(\ell(\mcI))$. Hence the closure of $\Ob(\mcI)$ under quotient objects is contained in the subcategory of objects of $r(\ell(\mcI))$. Now let $f\colon Z\to X$ be a morphism in $r(\ell(\mcI))$. By Proposition \ref{P: im max complete}, there exists a morphism $g\colon T\to X$ in $\mcI$ with $\im f \subseteq \im g$. Since $\mcI$ is an object ideal, there exists an object $T'\in\Ob(\mcI)$ such that $g=g'h$ with $h\colon T\to T'$ and $g'\colon T'\to X$. Hence $\im f \subseteq \im g \subseteq \im g'$ and $f$ factors through the object $\im g'$, which belongs to the closure of $\Ob(\mcI)$ under quotient objects.
\end{proof}

\begin{coro}\label{C: ob in ab}
Let $\mcX$ be an additive subcategory of $\mcA$ which is closed under quotient objects. If $\mcI(\mcX)$ is image maximal, then $(\mcI(\mcX), \ell(\mcI(\mcX))$ is a complete ideal torsion pair.
\end{coro}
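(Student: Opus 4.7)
The plan is to deduce the corollary by combining Proposition~\ref{P: im max complete} (image maximality yields a complete torsion pair $(r(\ell(\mcI(\mcX))),\ell(\mcI(\mcX)))$) with Proposition~\ref{P: ob in ab} (identifying the objects of $r(\ell(\mcI(\mcX)))$ as the quotient-closure of $\Ob(\mcI(\mcX))$), and then showing that under the hypothesis on $\mcX$ the quotient-closure adds nothing new, so that $r(\ell(\mcI(\mcX)))$ collapses onto $\mcI(\mcX)$.

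First I would check that $\mcI(\mcX)$ is itself an object ideal, which is the hypothesis needed to invoke Proposition~\ref{P: ob in ab}. This amounts to verifying $\mcI(\mcX)=\mcI(\Ob(\mcI(\mcX)))$: if a morphism factors through some $Y\in\Ob(\mcI(\mcX))$, then $1_Y\in\mcI(\mcX)$ factors through some $X\in\mcX$, so $Y$ is a retract of $X$ and any morphism through $Y$ also factors through $X$, hence belongs to $\mcI(\mcX)$.

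Next, by the image maximality assumption, Proposition~\ref{P: im max complete} provides that $(r(\ell(\mcI(\mcX))),\ell(\mcI(\mcX)))$ is a complete ideal torsion pair, and Proposition~\ref{P: ob in ab} identifies $\Ob(r(\ell(\mcI(\mcX))))$ as the closure of $\Ob(\mcI(\mcX))$ under quotient objects. The key step is then to observe that this closure equals $\Ob(\mcI(\mcX))$ itself: given $Z\in\Ob(\mcI(\mcX))$ realized as a retract of some $X\in\mcX$ via a split epi $\pi\colon X\to Z$, and an epimorphism $q\colon Z\to Y$, the composite $q\pi\colon X\to Y$ is epic, so $Y$ is a quotient of $X\in\mcX$ and therefore lies in $\mcX\subseteq\Ob(\mcI(\mcX))$ by the quotient-closedness hypothesis on $\mcX$.

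Since both $\mcI(\mcX)$ and $r(\ell(\mcI(\mcX)))$ are object ideals with the same class of objects, they coincide (an object ideal is determined by its objects), and completeness of the pair $(\mcI(\mcX),\ell(\mcI(\mcX)))$ follows. I anticipate no real obstacle; the only delicate point is being careful that $\Ob(\mcI(\mcX))$ consists of \emph{retracts} of objects of $\mcX$ rather than objects of $\mcX$ themselves, which is why the split epi $\pi$ must be interposed in the quotient-closure argument.
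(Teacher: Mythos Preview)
Your proposal is correct and follows essentially the same route as the paper: invoke Proposition~\ref{P: im max complete} for completeness of $(r(\ell(\mcI(\mcX))),\ell(\mcI(\mcX)))$, use Proposition~\ref{P: ob in ab} to identify $r(\ell(\mcI(\mcX)))$ as an object ideal whose objects are the quotient-closure of $\Ob(\mcI(\mcX))$, and then observe that quotient-closedness of $\mcX$ forces this closure to collapse back to $\mcI(\mcX)$. The paper's proof is terser---it simply asserts that the objects of $r(\ell(\mcI(\mcX)))$ equal $\mcX$, tacitly using that $\mcX$ being closed under quotients already makes it closed under retracts, so $\Ob(\mcI(\mcX))=\mcX$---whereas you carefully track the retract issue and interpose the split epi; both arrive at the same conclusion.
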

\begin{proof}
By Proposition \ref{P: ob in ab}, the ideal $r(\ell(\mcI(\mcX))$ is an object ideal and its collection of objects equals $\mcX$. Hence $(\mcI(\mcX), \ell(\mcI(\mcX))$ is a complete ideal torsion pair by Proposition \ref{P: im max complete}.
\end{proof}

For a complete ideal torsion pair $(\mcI, \mcJ)$ in $\mcA$, we may ask the question: when is the ideal $\mcI$ preenveloping?

\begin{thm}\label{L: proj envelope}
If $\mcI$ be an image maximal ideal in an abelian category $\mcA$ with enough projective objects, then the ideal $r(\ell(\mcI))$ is preenveloping if and only if every projective object in $\mcA$ admits an $\mcI$-preenvelope.
\end{thm}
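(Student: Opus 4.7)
The plan is to use Proposition~\ref{P: im max complete} to describe $r(\ell(\mcI))$ explicitly: for each object $M$ there is an inclusion $\iota_{M}\colon T(M)\hookrightarrow M$ with $T(M)=\im f_{M}$ for some $f_{M}\in\mcI(-,M)$, and a morphism $g\colon X\to M$ belongs to $r(\ell(\mcI))$ precisely when $\im g\subseteq T(M)$, equivalently, when $g$ factors through $\iota_{M}$. This factorization, combined with the projectivity of a suitable cover, will be the workhorse of both directions.

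For the forward direction, I would assume $r(\ell(\mcI))$ is preenveloping, fix a projective $P$, and let $j\colon P\to E$ be an $r(\ell(\mcI))$-preenvelope. Since $j$ factors through $\iota_{E}$ and $f_{E}\colon U\to E$ induces an epimorphism $U\twoheadrightarrow T(E)$, the projectivity of $P$ delivers $j'\colon P\to U$ with $f_{E}j'=j$. Thus $j\in\mcI$, and because $\mcI\subseteq r(\ell(\mcI))$ every morphism in $\mcI$ out of $P$ automatically factors through $j$; so $j$ is an $\mcI$-preenvelope of $P$.

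For the converse, I would assume every projective has an $\mcI$-preenvelope, pick an arbitrary $X\in\mcA$, choose an epimorphism $p\colon P\twoheadrightarrow X$ with $P$ projective, and fix an $\mcI$-preenvelope $j_{P}\colon P\to Y$. Forming the pushout
$$\xymatrix@R=25pt@C=25pt{P \ar[r]^{p} \ar[d]_{j_{P}} & X \ar[d]^{j_{X}} \\ Y \ar[r]^{p'} & Z,}$$
I claim that $j_{X}$ is an $r(\ell(\mcI))$-preenvelope of $X$. Given $g\colon X\to M$ in $r(\ell(\mcI))$, the composite $gp$ has image in $T(M)$, so the lifting argument of the previous paragraph gives $gp\in\mcI$; hence $gp$ factors through $j_{P}$, and the universal property of the pushout delivers the required factorization of $g$ through $j_{X}$.

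The step I expect to be the main obstacle is verifying that $j_{X}$ itself lies in $r(\ell(\mcI))$. For this, I would take any $k\in\ell(\mcI)$ with source $Z$, note that $kp'\in\ell(\mcI)$ because $\ell(\mcI)$ is an ideal, and invoke the $\Hom$-orthogonality $\ell(\mcI)\cdot\mcI=0$ to conclude $kp'j_{P}=0$. The commutativity of the pushout square then gives $kj_{X}p=kp'j_{P}=0$, and since $p$ is epic, $kj_{X}=0$, which is precisely what is needed to place $j_{X}$ in $r(\ell(\mcI))$.
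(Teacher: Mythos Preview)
Your proof is correct and follows essentially the same route as the paper: both arguments hinge on the observation that for a projective $P$ one has $\mcI(P,-)=r(\ell(\mcI))(P,-)$ (obtained via the image-maximality description from Proposition~\ref{P: im max complete} together with a projective lift), and both produce the preenvelope of a general $X$ as the pushout of an $\mcI$-preenvelope of a projective cover along the covering epimorphism. The only cosmetic difference is that the paper isolates the equality $\mcI(P,-)=r(\ell(\mcI))(P,-)$ as a preliminary lemma and then invokes it in both directions, whereas you carry out the lifting argument inline each time; your verification that $j_{X}\in r(\ell(\mcI))$ via $kj_{X}p=0$ and the epimorphism $p$ is exactly the ``right annihilator plus epi'' step the paper uses.
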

\begin{proof}
Let $P\in\mcA$ be projective and $f\colon P\to X$ a morphism in $r(\ell(\mcI))$. Then by Proposition \ref{P: im max complete}, there exists a morphism $g\colon Y\to X$ in $\mcI$ with $\im f \subseteq \im g$. Because $P$ is projective, there exists a morphism $h\colon P\to Y$ such that the diagram
$$\xymatrix@R=30pt@C=30pt{
               &                      & P\ar@{-->}[dll]_h\ar[dl]\ar[d]^{f}  \\
      Y \ar[r] & \im g \ar[r]\ar[r]   & X}
$$
commutes. Thus $f=gh\in\mcI$. It follows that every morphism starting in a projective object belongs to $r(\ell(\mcI))$ if and only if it belongs to  $\mcI$. Hence an $r(\ell(\mcI))$-preenvelope of a projective object corresponds to an $\mcI$-preenvelope. In particular, if $r(\ell(\mcI))$ is preenveloping, then every projective object in $\mcA$ admits an $\mcI$-preenvelope.
Now assume that every projective object in $\mcA$ admits an $\mcI$-preenvelope and thus an $r(\ell(\mcI))$-preenvelope. Let $X\in\mcA$ be arbitrary and take an epimorphism $\pi\colon P\to X$ with $P$ projective. Further, let $f\colon P\to Y$ be an $r(\ell(\mcI))$-preenvelope of $P$ and consider the pushout diagram
$$\xymatrix@R=30pt@C=30pt{
      P \ar[r]^\pi \ar[d]_f & X \ar[d]^g \\
      Y \ar[r]              & Z.}
$$
We will show that $g$ is an $r(\ell(\mcI))$-preenvelope of $X$. First,
$f\in r(\ell(\mcI))$ implies $g\pi\in r(\ell(\mcI))$. Since $\pi$ is an epimorphism and $r(\ell(\mcI))$ is a right annihilator ideal, we have
$g\in r(\ell(\mcI))$. Now let $g'\colon X\to Z'$ be an arbitrary morphism in $r(\ell(\mcI))$. Then $g'\pi$ factors through $f$ and by the pushout property, there exists a morphism $Z\to Z'$ such that the following diagram
$$\xymatrix@R=30pt@C=30pt{
     P \ar[r]^\pi \ar[d]_f    & X \ar[d]^g  \ar@/^1pc/[rdd]^{g'} & \\
     Y \ar@/_1pc/[drr] \ar[r] & Z \ar@{-->}[rd]                  & \\
                                                               & & Z' }
$$
commutes. It follows that $g$ is an $r(\ell(\mcI))$-preenvelope of $X$.
\end{proof}

\begin{exa}\label{E: preradical}\rm
Let $R$ be a left noetherian ring and denote by $R$-$\Mod$ the category of left $R$-modules. The work of Enochs \cite{E} shows that the object ideal $\mcI=\mcI(R$-$\Inj)$ consisting of all morphisms that factor through an injective left $R$-module is covering in $R$-$\Mod$. If $M$ is a left $R$-module and $i\colon E\to M$ is an $\mcI$-cover of $M$ with $E$ injective, then the domain of the $r(\ell(\mcI))$-cover of $M$ is given by $\im i$, which is the maximal singular submodule $Z(M)$ of $M$. Proposition \ref{P: ob in ab} implies that $Z(M)$ is an object of $r(\ell(\mcI))$ and $r(\ell(\mcI))$ is the object ideal of all morphisms that factor through a cosyzygy, a quotient of an injective object in $R$-$\Mod$. The associated preradical of the complete ideal torsion pair $(r(\ell(\mcI)), \ell(\mcI))$ generated by $\mcI$ is the left exact preradical associated to the pretopology on $R$ given by the family of essential left ideals of $R$ \cite[Chapter \uppercase\expandafter{\romannumeral6}. \S 6]{S}.

Now consider the category $R$-$\mbox{\rm mod}$ of finitely generated left $R$-modules, which is also an abelian category and further fulfills the subobject condition, so by Corollary \ref{C: subobj comp} every ideal torsion pair in $R$-$\mbox{\rm mod}$ is complete. Corollary \ref{C: ob in ab} implies that every additive subcategory of $R$-$\mbox{\rm mod}$ which is closed under quotient objects yields a complete ideal torsion pair. Further, every preenveloping ideal $\mcJ$ generates a complete ideal torsion pair $(r(\ell(\mcJ)), \ell(\mcJ))$ with $r(\ell(\mcJ))$ preenveloping, in fact, it is enough that $R$ admits a $\mcJ$-preenvelope.
\end{exa}

\section{Frobenius Categories}
Let $(\mcA; \mcE)$ be an exact category, and denote by $\mcE$-$\Proj$ (resp., $\mcE$-$\Inj$) the subcategory of projective objects (resp., injective objects) in $(\mcA;\mcE)$. The exact category $(\mcA; \mcE)$ is said to be {\em Frobenius} \cite{B} if: (1) there exist enough projective objects in $(\mcA; \mcE)$; (2) there exist enough injective objects in $(\mcA; \mcE)$; and (3) $\mcE$-$\Proj=\mcE$-$\Inj$. Let us recall the definition of the {\em stable category}, denoted by $\underline{\mcA}$, of a Frobenius category $(\mcA; \mcE)$ and the attending {\em stable quotient functor} $\pi\colon (\mcA; \mcE)\to \underline{\mcA}$. The objects in $\underline{\mcA}$ are nothing more than the objects $A$ in $\mcA$, only denoted by $\pi(A)$ and referred to as {\em stable objects}. For two objects $A$ and $B$ in $\mcA$, the abelian group of $\underline{\mcA}$-morphisms between the stable objects $\pi(A)$ and $\pi(B)$ is given by
$$\underline{\Hom}(\pi(A), \pi(B)) :=\Hom(A,B)/\Proj(A,B),$$
where $\Proj(A,B)\subseteq\Hom(A,B)$ is the subgroup of $\mcA$-morphisms that factor through a projective object in $(\mcA; \mcE)$. The {\em stable class} of a morphism $f\colon A\to B$ in $\mcA$ is denoted by $\pi(f)\colon \pi(A)\to\pi(B)$, and the rule $\pi\colon (\mcA; \mcE)\to \underline{\mcA}$ that assigns to an object $A$ the stable object $\pi(A)$ and to the morphism $f\colon A\to B$ its stable class $\pi(f)$ is functorial. The stable category $\underline{\mcA}$ is therefore the quotient category of $\mcA$ modulo the ideal of morphisms that factor through $\mcE$-$\Proj= \mcE$-$\Inj$, and so the stable quotient functor $\pi$ is dense and full.

\subsection{Bijective correspondences}
Let $\mcI$ be an ideal of a Frobenius category $(\mcA; \mcE)$. We say that $\mcI$ {\em contains} (the subcategory) $\mcE$-$\Proj=\mcE$-$\Inj$ if for every object $P$ in $\mcE$-$\Proj=\mcE$-$\Inj$, the identity morphism $1_{P}$ belongs to $\mcI(P, P)$. Since the stable quotient functor $\pi$ is dense and full, the image $\pi(\mcI)$ of $\mcI$ is an ideal of $\underline{\mcA}$. These considerations imply the first statement in the following.

\begin{prop}\label{P: in F}
Let $(\mcA; \mcE)$ be a Frobenius category. The stable quotient functor induces a bijective correspondence $\mcI\mapsto \pi(\mcI)$ between the ideals of $\mcA$ that contain $\mcE$-$\Proj=\mcE$-$\Inj$ and the ideals of the stable category $\underline{\mcA}$. Furthermore, an ideal $\mcI$ of $\mcA$ that contains $\mcE$-$\Proj=\mcE$-$\Inj$ is precovering in $\mcA$ if and only if $\pi(\mcI)$ is a precovering ideal in $\underline{\mcA}$. In this case, every object $A\in\mcA$ has an $\mcI$-precover that is a deflation in $(\mcA; \mcE)$.
\end{prop}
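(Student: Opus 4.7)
The plan is to prove both parts by constructing an explicit inverse to $\pi$ on ideals and then lifting precovers between $\mcA$ and $\underline{\mcA}$ in a controlled way. For the bijection, send an ideal $\mcJ$ of $\underline{\mcA}$ to its preimage $\pi^{-1}(\mcJ) = \{ f \mid \pi(f) \in \mcJ \}$, which is an ideal of $\mcA$ containing $\mcE$-$\Proj$ because any morphism factoring through an object of $\mcE$-$\Proj = \mcE$-$\Inj$ has zero stable class. For $\mcI$ containing $\mcE$-$\Proj$, the identity $\pi^{-1}(\pi(\mcI)) = \mcI$ reduces to the observation that if $\pi(f) = \pi(g)$ with $g \in \mcI$, then $f - g$ factors through an object of $\mcE$-$\Proj \subseteq \mcI$, so $f = g + (f-g) \in \mcI$; the reverse identity $\pi(\pi^{-1}(\mcJ)) = \mcJ$ is immediate from the fullness of $\pi$.

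For the forward implication of the precovering statement, if $\mcI$ is precovering in $\mcA$ and $i \colon T \to A$ is an $\mcI$-precover, then I claim $\pi(i)$ is a $\pi(\mcI)$-precover of $\pi(A)$: any morphism $\bar{j} \colon \pi(T') \to \pi(A)$ in $\pi(\mcI)$ lifts along $\pi$ to some $j \in \mcI(T', A)$ by the bijection just established, and the factorization $j = ih$ of $j$ through $i$ becomes $\bar{j} = \pi(i) \pi(h)$.

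For the backward direction, suppose $\pi(\mcI)$ is precovering, let $A \in \mcA$, and lift a $\pi(\mcI)$-precover $\bar{i} \colon \pi(T) \to \pi(A)$ to $i \in \mcI(T, A)$ using the bijection. Because $(\mcA; \mcE)$ has enough projectives, choose a deflation $p \colon P \to A$ with $P$ projective; since $1_P \in \mcI$, the morphism $p = p \cdot 1_P$ belongs to $\mcI$, and consequently $(i, p) \colon T \oplus P \to A$ lies in $\mcI$. To see that $(i, p)$ is a deflation, I factor it as
$$\xymatrix@C=30pt{T \oplus P \ar[r]^-{1_T \oplus p} & T \oplus A \ar[r]^-{(i,\, 1_A)} & A,}$$
where the first map is a direct sum of deflations and the second is a split epimorphism with section $\left(\begin{smallmatrix} 0 \\ 1_A \end{smallmatrix}\right)$, so both are deflations and so is their composition. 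For the precover property, given $f \colon T' \to A$ in $\mcI$, the precover property of $\pi(i)$ supplies $h \colon T' \to T$ with $\pi(f) = \pi(i) \pi(h)$, so $f - ih = qr$ factors through some projective-injective object $Q$; the projectivity of $Q$ combined with the deflation $p$ yields $q = pq'$ for some $q' \colon Q \to P$, and hence $f = (i, p) \left(\begin{smallmatrix} h \\ q'r \end{smallmatrix}\right)$. The main technical obstacle is precisely this last coordination — arranging the $\mcI$-precover to be simultaneously a deflation while still absorbing all $\mcI$-morphisms into $A$ — and it is resolved by the direct-sum-with-projective construction together with the factorization through $T \oplus A$ displayed above.
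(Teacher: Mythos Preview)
Your proof is correct and follows essentially the same strategy as the paper: establish the bijection via the preimage construction, push an $\mcI$-precover forward along $\pi$ for the easy direction, and for the converse lift a $\pi(\mcI)$-precover to $i\colon T\to A$ and augment it by a projective deflation $p\colon P\to A$ to obtain the $\mcI$-precover $(i,p)\colon T\oplus P\to A$.

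The one point where you diverge from the paper is the verification that $(i,p)$ is a deflation. The paper takes the pullback of the conflation $K\to P\to A$ along $i$ and then invokes the dual of \cite[Proposition~2.12]{B} to produce a conflation $Q\to T\oplus P\xrightarrow{(i,p)}A$. Your factorization
\[
(i,p)=(i,1_A)\circ(1_T\oplus p)
\]
as a split epimorphism composed with a direct sum of deflations is more elementary and self-contained, using only that deflations are closed under direct sums and composition in an exact category; it avoids the external citation entirely. Both routes reach the same conclusion, and the remaining precover verification (factoring $f-ih$ through $P$ via projectivity) is identical in the two arguments.
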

\begin{proof}
To prove the second statement, let $A\in\mcA$ and suppose that an $\mcI$-precover $i\colon T\to A$ of $A$ is given. Because the stable quotient functor $\pi$ is dense and full, the stable class $\pi(i)\colon \pi(T)\to\pi(A)$ of this $\mcI$-precover is a $\pi(\mcI)$-precover of $\pi(A)$. To prove the converse, as well as the last statement, let us suppose that a $\pi(\mcI)$-precover of $\pi(A)$ is given. Because the stable quotient functor $\pi$ is dense and full, such a $\pi(\mcI)$-precover is of the form $\pi(i)\colon \pi(T)\to\pi(A)$, where $i\colon T\to A$ belongs to $\mcI$, by the first statement of this proposition. There are enough projective objects in $(\mathcal{A};\mathcal{E})$, so let us take the pullback of the conflation $K\stackrel{}{\rightarrow}P\stackrel{p}{\rightarrow}A$ along $i$, where $P$ belongs to $\mathcal{E}$-$\Proj$. This gives rise to the morphism of conflations depicted by
$$\xymatrix@R=30pt@C=30pt{
     K \ar[r]\ar@{=}[d]  & Q \ar[r]\ar[d]       & T \ar[d]^{i}\\
     K \ar[r]            & P \ar[r]^{p}         & A. }
$$
The dual of \cite[Proposition 2.12]{B} now provides the conflation
$$\xymatrix@R=30pt@C=30pt{
     Q\ar[r]  & T\oplus P\ar[r]^(.6){(i,\,p)}  & A }
$$
whose deflation $(i,p)$ belongs to $\mcI$, because $\mcI\supseteq\mcE$-$\Proj$. To verify that $(i,p)\colon T\oplus P\to A$ is an $\mcI$-precover of $A$, let $i'\colon T'\to A$ be a morphism in $\mcI$. The stable class $\pi(i')\colon \pi(T')\to\pi(A)$ belongs to $\pi(\mcI)$ and so factors through $\pi(i)$ as $\pi(i')=\pi(i)\pi(f)$. The morphism $i'-if\colon T'\to A$ thus factors through a projective object, and therefore through $p$. It follows that there exists a morphism
$g\colon T'\to P$ satisfying $i'-if=pg$, and so the morphism $i'$ factors through $(i,p)$, as required.
\end{proof}

Let $A$ be an object in a Frobenius category $(\mcA;\mcE)$. Since $(\mcA;\mcE)$ has enough injective objects, there is a conflation
$$\xymatrix@R=30pt@C=30pt{
     A \ar[r]  & E \ar[r]  & \Sigma(A), }
$$
where $E$ belongs to $\mcE$-$\Inj$. The stable object $\pi(\Sigma(A))$ of the cosyzygy object of $A$ is well-defined up to unique isomorphism in $\underline{\mcA}$. Moreover, if $f\colon A\to B$ is a morphism in $\mcA$, then a morphism of conflations is induced as in
$$\xymatrix@R=30pt@C=30pt{
     A \ar[r]\ar[d]_{f} & E\ar[r]\ar[d]  & \Sigma(A)\ar[d]^{\Sigma(f)}\\
     B \ar[r]           & E'\ar[r]       & \Sigma(B) }
$$
and the stable class $\pi(\Sigma(f))\colon \pi(\Sigma(A))\to\pi(\Sigma(B))$ of the cosyzygy morphism of $f$ is well-defined up to unique isomorphism in $\underline{\mcA}$. The rule $A\mapsto\pi(\Sigma(A))$, $f\mapsto\pi(\Sigma(f))$, denoted simply by $\Sigma\colon \mcA\to\underline{\mcA}$, is functorial. The functor $\Omega\colon \mcA\to\underline{\mcA}$ is defined in a dual manner. In this case, the functor $\Sigma\colon \mcA\to\underline{\mcA}$ induces an endofunctor of $\underline{\mcA}$, also denoted by $\Sigma\colon \underline{\mcA}\to\underline{\mcA}$, and its inverse is the induced endofunctor $\Omega\colon \underline{\mcA}\to\underline{\mcA}$. The stable category $\underline{\mcA}$ of $(\mcA;\mcE)$ therefore has the structure of a triangulated category as follows: given a morphism $f\colon A\to B$ in $\mcA$, fix a conflation $A\to E\to\Sigma(A)$ in $(\mcA;\mcE)$ with $E$ injective and consider the pushout diagram
$$\xymatrix@R=30pt@C=30pt{
     A \ar[r]\ar[d]_{f}  & E \ar[r]\ar[d]   & \Sigma(A)\ar@{=}[d]\\
     B \ar[r]            & D \ar[r]         & \Sigma(A). }
$$
The induced sequence
$$\xymatrix@R=30pt@C=30pt{
      \pi(A)\ar[r]^{\pi(f)}  & \pi(B)\ar[r]  & \pi(D)\ar[r]  & \pi(\Sigma(A))}
$$
is called a {\em standard triangle} in $\underline{\mcA}$. The class of triangles in $\underline{\mcA}$ consists of all triangles that are isomorphic to a standard triangle, for more details, we refer to \cite{Ha}.

Let $B$ be an object in a Frobenius category $(\mcA;\mcE)$, and fix a conflation $\beta\colon  B\to E\to\Sigma(B)$ in $(\mcA;\mcE)$ with $E$ an injective object. Given a morphism $g\colon A\to\Sigma(B)$ in $\mcA$, the pullback of $\beta$ along $g$ gives rise to a commutative diagram of conflations:
$$\xymatrix@R=30pt@C=30pt{
    \gamma:~~~~B \ar[r]\ar@{=}@<2ex>[d]  & C \ar[r]\ar[d]  & A\ar[d]^{g}\\
    \beta:~~~~B \ar[r]                   & E \ar[r]        & \Sigma(B). }
$$
This operation $\epsilon\colon g\mapsto\gamma$ defines a morphism
$$\epsilon\colon \Hom(A, \Sigma(B))\to\Ext(A,B)$$
of abelian groups. Since $E$ is injective, every conflation $\gamma\in\Ext(A,B)$ arises as such a pullback of $\beta$, so that the morphism $\epsilon$ is onto. The kernel of $\epsilon$ is given by the subgroup of $\Hom(A, \Sigma(B))$ of those morphisms that factor through the now projective object $E$. This induces an isomorphism
$$\epsilon_{A,B}\colon \underline{\Hom}(\pi(A), \pi(\Sigma(B)))\cong\Ext(A,B),$$
natural in both variables. Naturality then implies that for every pair $(f,g)$ of morphisms in
$\mcA$, $\Ext(f,g)=0$ if and only if $\underline{\Hom}(\pi(f), \pi(\Sigma(g)))=0$, which yields the following observation.

\begin{prop}\label{P: bij in F}
Let $(\mcA;\mcE)$ be a Frobenius category. The rule $(\mcI, \mcJ)\mapsto(\pi(\mcI), \pi(\Sigma(\mcJ)))$ is a bijective correspondence between ideal cotorsion pairs in $(\mcA;\mcE)$ and ideal torsion pairs in the stable category $\underline{\mcA}$.
\end{prop}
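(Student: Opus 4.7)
My plan is to combine Proposition~\ref{P: in F} with the natural isomorphism $\epsilon_{A,B}\colon \underline{\Hom}(\pi(A),\pi(\Sigma(B))) \cong \Ext(A,B)$ established just before the statement. First, I would observe that in any ideal cotorsion pair $(\mcI,\mcJ)$ in $(\mcA;\mcE)$, one has $\mcI\cap\mcJ \supseteq \mcE\mbox{-}\Proj = \mcE\mbox{-}\Inj$: for each such object $E$, projectivity gives $\Ext(1_E,-) = 0$ so that $1_E\in{}^{\perp_1}\mcJ = \mcI$, and dually $1_E\in\mcJ$. This brings $\mcI$ and $\mcJ$ into the domain of the bijection of Proposition~\ref{P: in F}. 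Since the induced endofunctor $\Sigma\colon\underline{\mcA}\to\underline{\mcA}$ is an autoequivalence, the rule $\bar{\mcJ}\mapsto\Sigma(\bar{\mcJ})$ is a bijection on ideals of $\underline{\mcA}$, giving unambiguous meaning to $\pi(\Sigma(\mcJ)) := \Sigma(\pi(\mcJ))$.

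The next step is to translate orthogonality. Naturality of $\epsilon_{A,B}$ in both variables yields, for morphisms $i\colon X\to A$ and $j\colon B\to Y$ in $\mcA$, the equivalence $\Ext(i,j) = 0$ if and only if $\underline{\Hom}(\pi(i),\pi(\Sigma(j))) = 0$. In particular, $(\mcI,\mcJ)$ is $\Ext$-orthogonal if and only if $(\pi(\mcI),\pi(\Sigma(\mcJ)))$ is $\Hom$-orthogonal in $\underline{\mcA}$. To upgrade this to maximality, I would show $\pi(\mcI)^{\perp} = \pi(\Sigma(\mcJ))$. Because $\Sigma\colon\underline{\mcA}\to\underline{\mcA}$ is an autoequivalence and $\pi$ is full and dense, every morphism in $\underline{\mcA}$ admits a representative of the form $\pi(\Sigma(g))$ for some morphism $g$ in $\mcA$. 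Then $\pi(\Sigma(g))\in\pi(\mcI)^{\perp}$ if and only if $\Ext(i,g) = 0$ for all $i\in\mcI$, if and only if $g\in\mcI^{\perp_1} = \mcJ$, if and only if $\pi(\Sigma(g))\in\pi(\Sigma(\mcJ))$. The last equivalence is where the containment $\mcJ\supseteq\mcE\mbox{-}\Inj$ is used: two morphisms of $\mcA$ with equal stable class differ by one factoring through an injective, which lies in $\mcJ$. Dually, ${}^{\perp}\pi(\Sigma(\mcJ)) = \pi(\mcI)$, so the image is indeed an ideal torsion pair.

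Injectivity of the correspondence is immediate from Proposition~\ref{P: in F} combined with the bijectivity of $\Sigma$ on ideals of $\underline{\mcA}$. For surjectivity, given an ideal torsion pair $(\mcI',\mcJ')$ in $\underline{\mcA}$, lift $\mcI'$ to the unique ideal $\mcI$ of $\mcA$ containing $\mcE\mbox{-}\Proj$ with $\pi(\mcI) = \mcI'$, and lift $\Omega(\mcJ')$ similarly to an ideal $\mcJ$ with $\pi(\mcJ) = \Omega(\mcJ')$, both provided by Proposition~\ref{P: in F}. Then $\pi(\Sigma(\mcJ)) = \Sigma(\Omega(\mcJ')) = \mcJ'$, and reading the equivalence chain of the previous paragraph backwards, using that $\mcI' = (\mcJ')^{\perp}\hbox{-orthogonality}$ translates to $\mcI = {}^{\perp_1}\mcJ$ via $\epsilon_{A,B}$, shows that $(\mcI,\mcJ)$ is an ideal cotorsion pair in $(\mcA;\mcE)$.

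The main subtlety I expect is notational: the expression $\Sigma(\mcJ)$ viewed inside $\mcA$ depends on choices of conflations $B\to E\to\Sigma(B)$, so every comparison must be made after passing to $\underline{\mcA}$ via $\pi$. Once one systematically interprets $\pi(\Sigma(\mcJ))$ as the image under the autoequivalence $\Sigma\colon\underline{\mcA}\to\underline{\mcA}$ of the ideal $\pi(\mcJ)$, the remainder of the proof reduces to unwinding definitions and invoking Proposition~\ref{P: in F} and the natural isomorphism $\epsilon_{A,B}$.
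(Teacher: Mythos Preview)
Your proposal is correct and follows the same approach as the paper, which treats the proposition as an immediate observation from the natural isomorphism $\epsilon_{A,B}$ and the equivalence $\Ext(f,g)=0 \Leftrightarrow \underline{\Hom}(\pi(f),\pi(\Sigma(g)))=0$ established just before the statement. You have simply spelled out in detail the steps the paper leaves implicit: that both $\mcI$ and $\mcJ$ contain $\mcE$-$\Proj=\mcE$-$\Inj$ so that Proposition~\ref{P: in F} applies, that orthogonality transfers, and that maximality and bijectivity follow.
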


The following lemma provides a sufficient condition for a precovering ideal in a Frobenius category to be $\Ext$-special, which can be used to verify Theorem \ref{T: bij com in F} below.

\begin{lem}\label{L: special in F}
Let $\mcI$ be a precovering ideal in a Frobenius category $(\mcA;\mcE)$. If every object $A\in\mcA$ has an $\mcI$-precover that is a deflation, then $\mcI$ is an $\Ext$-special precovering ideal.
\end{lem}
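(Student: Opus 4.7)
The plan is to take the given deflation $\mcI$-precover $i\colon T\to X$ and upgrade it to an $\Ext$-special $\mcI$-precover by filling in the diagram~(\ref{Eq:special precover}) from the definition. Fix $X\in\mcA$ and let $i$ have attending conflation $K\to T\to X$. Using that $(\mcA;\mcE)$ has enough injectives, embed $K$ in an injective object $E$ to obtain a conflation $K\to E\to F$; since $\mcE$-$\Proj=\mcE$-$\Inj$, this realizes $K$ as the syzygy $\Omega(F)$. Extending $K\hookrightarrow E$ along $K\to T$ via the injective property yields $t\colon T\to E$, and on cokernels a morphism $j\colon X\to F$. The resulting morphism of conflations
$$\xymatrix@R=18pt@C=22pt{K \ar[r]\ar@{=}[d] & T \ar[r]^{i}\ar[d]^{t} & X \ar[d]^{j} \\ K \ar[r] & E \ar[r] & F}$$
exhibits $K\to T\to X$ as the pullback of $K\to E\to F$ along $j$, and in particular $j\, i$ factors through the injective object $E$.

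Next, take a conflation $\Omega(X)\to P\to X$ with $P$ projective. Projectivity of $P$ together with the deflation $i$ yields a lift $P\to T$, and its restriction to kernels is a morphism $\alpha\colon\Omega(X)\to K$. This produces the diagram
$$\xymatrix@R=18pt@C=22pt{\Omega(X) \ar[r]\ar[d]_{\alpha} & P \ar[r]\ar[d] & X \ar@{=}[d] \\ \Omega(F) \ar[r] & T \ar[r]^{i} & X}$$
with both rows conflations. With $\Omega(F)=K$ realized via the chosen $K\to E\to F$, the composition $P\to T\to E$ is a lift of $j\circ(P\to X)$ through the deflation $E\to F$ whose codomain has projective middle term, so it computes $\Omega(j)$; restricting this composition to kernels gives $\alpha$, so $\alpha=\Omega(j)$.

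It remains to verify $\Ext(\mcI,\Omega(j))=0$. By the natural isomorphism $\epsilon_{-,-}\colon \underline{\Hom}(\pi(-),\pi(\Sigma-))\cong\Ext(-,-)$ together with $\Sigma\Omega\cong\Id_{\underline{\mcA}}$, this reduces (using fullness of $\pi$) to showing $\pi(j\, f\, i')=0$ in $\underline{\mcA}$ for every $i'\in\mcI(A,A')$ and every $f\in\Hom(A',X)$. For such $i'$ and $f$, the composition $f\, i'$ lies in $\mcI$ since $\mcI$ is an ideal, and so factors through the $\mcI$-precover $i$ as $f\, i'=i\, h$ for some $h$. Since $j\, i=e\, t$ factors through the injective $E$ (where $e\colon E\to F$), we have $\pi(j\, i)=0$, and therefore $\pi(j\, f\, i')=\pi(j\, i\, h)=0$, as required.

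The main obstacle is the bookkeeping to identify the morphism $\alpha$ produced by the projective lift with a genuine representative of $\Omega(j)$; the trick is to use the same conflation $K\to E\to F$ (with $E$ projective as well as injective) both to define $j$ and to compute $\Omega(F)$, so that $\Omega(j)$ can be read off from the lift $P\to T\to E$. Once that identification is secured, the desired $\Ext$-orthogonality follows cleanly from the precover property of $i$ and the observation $\pi(j\, i)=0$.
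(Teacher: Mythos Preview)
Your proof is correct and follows essentially the same approach as the paper: both realize the conflation $K\to T\to X$ as a pushout of a projective conflation along some $\alpha\colon\Omega(X)\to K$, then verify $\alpha\in\mcI^{\perp_1}$ by passing to the stable category, where your $\pi(j)$ is precisely the paper's connecting morphism $\pi(\Sigma(g))$ in the associated triangle and the precover property of $\pi(i)$ forces $\pi(j)\pi(i')=0$. The only cosmetic difference is that you construct $j$ explicitly via the injective embedding of $K$ and carry out the bookkeeping to identify $\alpha$ with a representative of $\Omega(j)$, whereas the paper reads the connecting morphism off directly from the triangle structure without naming $j$.
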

\begin{proof} Let $A$ be an object in $\mcA$. Suppose that there is a conflation
$$\xymatrix@R=30pt@C=30pt{
     \alpha:~~~~F \ar[r]    & T \ar[r]^{i}   & A, }
$$
where $i\colon T\to A$ is an $\mcI$-precover of $A$. Because $(\mcA;\mcE)$ has enough projective objects, the conflation $\alpha$ arises as the pushout of a conflation $\beta$ as shown
$$\xymatrix@R=30pt@C=30pt{
     \;\;\;\;\beta:\Omega(A)\ar[r]\ar@<2.2ex>[d]_{g} & P\ar[r]\ar[d]  & A\ar@{=}[d]\\
     \alpha:~~~~F\ar[r]                              & T\ar[r]^{i}    & A, }
$$
where $P$ is projective. It suffices to verify that the morphism $g\colon \Omega(A)\to F$ belongs to $\mcI^{\perp_{1}}$ or, equivalently, that
$$\underline{\Hom}(\pi(\mcI), \pi(\Sigma(g)))=0.$$
But this morphism of conflations gives rise to the triangle
$$\xymatrix@R=30pt@C=30pt{
   \pi(F)\ar[r] & \pi(T)\ar[r]^{\pi(i)} & \pi(A)\ar[r]^(.45){\pi(\Sigma(g))} & \pi(\Sigma(F))}
$$
in the stable category $\underline{\mcA}$. As in the proof of Proposition \ref{P: in F}, the morphism $\pi(i)\colon \pi(T)\to \pi(A)$ is a $\pi(\mcI)$-precover of $\pi(A)$, so that for every morphism $i'\colon T'\to A$ in $\mcI$, the composition $\pi(\Sigma(g))\pi(i')$ is zero. Thus $\pi(\Sigma(g))\in \ell(\pi(\mcI))$ and therefore $\Hom(\pi(\mcI), \pi(\Sigma(g)))=0$, by Theorem \ref{T: orth and ann}.
\end{proof}

Combining with Propositions \ref{P: in F} and \ref{P: bij in F}, we get the following result.

\begin{thm}\label{T: bij com in F}
Let $(\mcA; \mcE)$ be a Frobenius category. An ideal cotorsion pair $(\mcI, \mcJ)$ in $(\mcA;\mcE)$ is complete if and only if the corresponding ideal torsion pair $(\pi(\mcI), \pi(\Sigma(\mcJ)))$ in $\underline{\mcA}$ is complete.
\end{thm}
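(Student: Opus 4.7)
The plan is to combine Propositions~\ref{P: in F} and~\ref{P: bij in F} with Lemma~\ref{L: special in F}, exploiting the fact that the weak kernel-cokernel structure on $\underline{\mcA}$ is the one coming from triangles. Proposition~\ref{P: bij in F} already supplies the bijection, so all that remains is to show that completeness is preserved in both directions. Throughout, I will implicitly use that both members of an ideal cotorsion pair $(\mcI,\mcJ)$ contain $\mcE$-$\Proj=\mcE$-$\Inj$ in a Frobenius category (projective–injective objects are automatically $\Ext$-orthogonal to everything), so that Propositions~\ref{P: in F} and~\ref{P: bij in F} do apply to $\mcI$.

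For the ``only if'' direction I would take an $\Ext$-special $\mcI$-precover $i\colon T\to A$ of a given object $A\in\mcA$. By definition (diagram~(\ref{Eq:special precover})), such an $i$ is the deflation of a conflation $F\to T\to A$ in $\mcE$. Applying the stable quotient functor turns this conflation into a standard triangle $\pi(F)\to\pi(T)\xrightarrow{\pi(i)}\pi(A)\to\pi(\Sigma(F))$, and so produces a weak conflation $\pi(T)\xrightarrow{\pi(i)}\pi(A)\to\pi(\Sigma(F))$ in $\underline{\mcA}$; in particular $\pi(i)$ is a weak inflation. Proposition~\ref{P: in F} identifies $\pi(i)$ as a $\pi(\mcI)$-precover of $\pi(A)$, and therefore a $\Hom$-special one. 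By Corollary~\ref{C: Hom special}, the ideal torsion pair $(\pi(\mcI),\pi(\Sigma(\mcJ)))$ is complete.

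For the converse, suppose $(\pi(\mcI),\pi(\Sigma(\mcJ)))$ is complete, so that $\pi(\mcI)$ is $\Hom$-special precovering, and in particular precovering in $\underline{\mcA}$. Proposition~\ref{P: in F} then says that $\mcI$ is precovering in $\mcA$ and, more to the point, that every object $A\in\mcA$ admits an $\mcI$-precover that is a deflation in $(\mcA;\mcE)$. Feeding this into Lemma~\ref{L: special in F} promotes $\mcI$ to an $\Ext$-special precovering ideal, and thus $(\mcI,\mcJ)$ is a complete ideal cotorsion pair.

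I do not expect a serious obstacle: the genuine content is already concentrated in Proposition~\ref{P: in F}, which lifts a precover in $\underline{\mcA}$ to a deflation precover in $\mcA$ via a pullback along a deflation from a projective, and in Lemma~\ref{L: special in F}, which upgrades a deflation precover to an $\Ext$-special one. The main point of care is the bookkeeping alluded to in the first paragraph and the recognition that the conflation witnessing the $\Ext$-specialness of a precover in $(\mcA;\mcE)$ descends under $\pi$ to exactly the weak conflation (\ref{Eq:special 0-precover}) that witnesses $\Hom$-specialness in the stable triangulated category $\underline{\mcA}$.
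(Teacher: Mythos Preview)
Your proposal is correct and follows essentially the same approach as the paper, which simply states that the theorem is obtained by combining Propositions~\ref{P: in F} and~\ref{P: bij in F} (together with Lemma~\ref{L: special in F}, stated immediately before). You have supplied exactly the details the paper leaves implicit: the ``only if'' direction uses that an $\Ext$-special precover is a deflation, hence descends to a weak inflation and $\pi(\mcI)$-precover in $\underline{\mcA}$, while the ``if'' direction chains Proposition~\ref{P: in F} with Lemma~\ref{L: special in F}.
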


\subsection{Applications}

\begin{exa}\rm
Let $\Lambda$ be an artin algebra, and $\Lambda$-mod the abelian category of finitely generated left $\Lambda$-modules. Then every object $M$ in $\Lambda$-mod has a decomposition $M=\oplus_{i=1}^{m}M_{i}$, where each $M_{i}$ is indecomposable with a local endomorphism ring. The {\em Jacobson radical} $\mcI=\Jac(\Lambda$-mod) is the ideal of $\Lambda$-mod obtained by taking the intersection of all left (resp., right) maximal ideals of $\Lambda$-mod, which is determined by the abelian groups of morphisms of indecomposable modules. If $U$ and $V$ are both indecomposable modules in $\Lambda$-mod, but not isomorphic, then $\mcI(U,V)=\Hom(U,V)$, while $\mcI(U,U)=\Jac(\End(U))$, the Jacobson radical of the endomorphism ring of $U$. This characterization of $\mcI$ implies that if $V\in\Lambda$-mod is indecomposable, then a morphism $f\colon M\to V$ belongs to $\mcI(M, V)$ if and only if it is not a split epimorphism.

By the work of Auslander and Reiten \cite{AR}, every indecomposable module $V$ in $\Lambda$-mod has an $\mcI$-precover $c\colon C\to V$, which is not a split epimorphism with the property that every morphism $c'\colon C'\to V$ that is not a split epimorphism factors through $c$, as in
$$\xymatrix@R=30pt@C=30pt{
                & C' \ar[d]^{c'}\ar@{-->}[ld] \\
  C \ar[r]^{c}  & V. }
$$
By Proposition \ref{P: precover complete}, the ideal torsion pair $(r(\ell(\mcI)), \ell(\mcI))$ generated by $\mcI$ is complete in $\Lambda$-mod. If an indecomposable module $V\in\Lambda$-mod is not projective, then the $\mcI$-precover $c\colon C\to V$ of $V$ is an epimorphism. This implies that the $\ell(\mcI)$-envelope, given by the cokernel of $c$, is the zero morphism $0\colon V\to 0$, and that the $r(\ell(\mcI))$-cover, given by the kernel of this zero morphism, is the identity morphism $1_{V}\colon V\to V$. If the indecomposable module $V=P$ is projective in $\Lambda$-mod, then it has a unique maximal submodule $J(P)$ and the $\mcI$-precover of $P$ is given by the inclusion morphism $c_{P} \colon J(P)\to P$. The $\ell(\mcI)$-envelope is given by the cokernel of $c_{P}$, which is the quotient morphism $P\to top(P)=P/J(P)$, and the $r(\ell(\mcI))$-cover is given by $c_{P}$ itself.

If the artin algebra $\Lambda$ is Quasi-Frobenius, then $\Lambda$-$\proj=\Lambda$-$\inj$. It follows from Lemma \ref{L: special in F} that for every almost split (Auslander-Reiten) sequence
$$\xymatrix@R=30pt@C=30pt{
      0 \ar[r]  & U \ar[r]^{f}  & W \ar[r]^{g}  & V \ar[r] & 0,}
$$
the morphism $g\colon W\to V$ is an $\Ext$-special $\mcI$-precover of $V$. Therefore, every almost split sequence in $\Lambda$-mod arises as a pushout along a morphism in $\mcI^{\perp_{1}}$, the {\em Auslander-Reiten cophantom ideal} \cite{FGHT}. This provides a new proof of the dual result of \cite[Corollary 51]{FGHT} that if $\Lambda$ is a Quasi-Frobenius artin algebra, then every almost split sequence in $\Lambda$-mod is right special.
\end{exa}

We conclude this paper with the following result, which gives a positive answer to Question 29 of \cite{FGHT} for complete ideal cotorsion pairs in a Frobenius category.

\begin{coro}\label{C: ob cor com}
Let $(\mcI, \mcJ)$ be a complete ideal cotorsion pair in a Frobenius category $(\mcA;\mcE)$. If both $\mcI$ and $\mcJ$ are object ideals, then the corresponding cotorsion pair $(\Ob(\mcI), \Ob(\mcJ))$ is complete.
\end{coro}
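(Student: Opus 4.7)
My plan is to reduce to the triangulated setting via Theorem~\ref{T: bij com in F}: the complete ideal cotorsion pair $(\mcI, \mcJ)$ corresponds to a complete ideal torsion pair $(\pi(\mcI), \pi(\Sigma\mcJ))$ in $\underline{\mcA}$, and the object-ideal hypothesis is preserved under the stable quotient functor $\pi$, with $\Ob(\pi(\mcI)) = \pi(\Ob(\mcI))$ and $\Ob(\pi(\Sigma\mcJ)) = \pi(\Sigma\Ob(\mcJ))$. Via the subcategory analogue of Proposition~\ref{P: bij in F}, completeness of the cotorsion pair $(\Ob(\mcI), \Ob(\mcJ))$ in $(\mcA;\mcE)$ is equivalent to the existence, for every $\pi(A) \in \underline{\mcA}$, of a triangle $\pi(X) \to \pi(A) \to \pi(\Sigma Y) \to \Sigma\pi(X)$ with $\pi(X) \in \Ob(\pi(\mcI))$ and $\pi(\Sigma Y) \in \Ob(\pi(\Sigma\mcJ))$.

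I would first verify that $(\Ob(\mcI), \Ob(\mcJ))$ is a classical cotorsion pair of subcategories. Orthogonality is automatic: for $X \in \Ob(\mcI)$ and $Y \in \Ob(\mcJ)$, $1_X \in \mcI$ and $1_Y \in \mcJ$ combined with $\Ext(\mcI, \mcJ) = 0$ give $\Ext^{1}(X, Y) = \Ext(1_X, 1_Y) = 0$. For maximality, if $\Ext^{1}(X', Y) = 0$ for every $Y \in \Ob(\mcJ)$, then any $j \in \mcJ$ factors through $\Ob(\mcJ)$ by the object-ideal hypothesis, whence $\Ext(1_{X'}, j) = 0$ and thus $1_{X'} \in \mcI$, i.e., $X' \in \Ob(\mcI)$; the argument for $\Ob(\mcJ)$ is symmetric.

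To construct the triangle in $\underline{\mcA}$, I would start with a $\Hom$-special $\pi(\mcJ)$-preenvelope of $\pi(A)$ and factor it through some $\pi(\Sigma Y) \in \Ob(\pi(\Sigma\mcJ))$ using the object-ideal hypothesis on $\pi(\Sigma\mcJ)$, obtaining a triangle $\pi(W) \to \pi(A) \to \pi(\Sigma Y) \to \Sigma\pi(W)$ whose first map is an $\pi(\mcI)$-precover by Salce's Lemma (Theorem~\ref{T: SL}). I then factor this precover through some $\pi(X) \in \Ob(\pi(\mcI))$ to get $\pi(W) \to \pi(X) \to \pi(A)$, and apply the octahedral axiom to this composition. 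The resulting octahedral triangle $V \to \pi(\Sigma Y) \to C \to \Sigma V$, combined with Salce's Lemma applied to the refined $\pi(\mcI)$-precover $\pi(X) \to \pi(A)$, should identify the cone $C$ of $\pi(X) \to \pi(A)$ with an object of $\Ob(\pi(\Sigma\mcJ))$, yielding the desired triangle.

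The main obstacle is precisely this last identification: showing that the cone of the refined $\pi(\mcI)$-precover actually lies in $\Ob(\pi(\Sigma\mcJ))$. This is the triangulated analogue of Proposition~\ref{P: rad} --- that a complete ideal torsion pair of object ideals yields an ordinary torsion pair of subcategories --- and requires combining the $\Hom$-orthogonality $\Hom(\Ob(\pi(\mcI)), \Ob(\pi(\Sigma\mcJ))) = 0$ (a consequence of the ideal torsion pair axioms together with $1_{X} \in \pi(\mcI)$ and $1_{Y} \in \pi(\Sigma\mcJ)$ in the object-ideal setting) with a closure property of $\Ob(\pi(\Sigma\mcJ))$ under the triangulated analogue of extensions. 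Once the triangle is obtained, lifting back to $(\mcA; \mcE)$ via Proposition~\ref{P: bij in F} --- absorbing projective-injective summands, which lie in $\Ob(\mcI) \cap \Ob(\mcJ)$, freely --- produces the required conflation $Y^{*} \to X^{*} \to A$. The preenveloping half of the cotorsion pair completeness follows by the dual construction applied to the $\Ext$-special $\mcJ$-preenvelope.
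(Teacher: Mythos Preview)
Your overall strategy coincides with the paper's: pass to the stable category via Theorem~\ref{T: bij com in F}, observe that the object-ideal property is preserved under $\pi$ and $\Sigma$, establish that $(\Ob(\pi(\mcI)),\Ob(\pi(\Sigma\mcJ)))$ is a torsion theory in $\underline{\mcA}$, and then lift completeness back to $(\mcA;\mcE)$. The paper, however, does not carry out the last two steps directly: it invokes \cite[Definition~2.2]{IY} for the torsion-theory structure and \cite[Proposition~3.3]{SS} for the lift. Your sketch of the lifting step---absorbing projective-injective summands, which lie in $\Ob(\mcI)\cap\Ob(\mcJ)$---is essentially the Saor\'in--\v{S}\v{t}ov\'i\v{c}ek argument and is convincing.

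The obstacle you flag in the torsion-theory step is a genuine gap that your octahedral argument does not close. From the triangle $\pi(X)\to\pi(A)\to C\to\Sigma\pi(X)$ with $\pi(X)\in\Ob(\pi(\mcI))$ and $\pi(X)\to\pi(A)$ a $\pi(\mcI)$-precover, Theorem~\ref{T: SL} only tells you that $\pi(A)\to C$ lies in $\pi(\Sigma\mcJ)$; it does not force $1_C\in\pi(\Sigma\mcJ)$. The octahedral triangle $V\to\pi(\Sigma Y)\to C\to\Sigma V$ is of no help, since you know nothing about $V$---in particular $V$ need not lie in $\Ob(\pi(\Sigma\mcJ))$, so the extension-closure heuristic you invoke does not apply. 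Concretely, the long exact sequence on $\pi(X)\to\pi(A)\to C$ shows that $\underline{\Hom}(T'',C)$ embeds into $\underline{\Hom}(T'',\Sigma\pi(X))\cong\Ext^{1}(T'',X)$ for each $T''\in\Ob(\mcI)$, but $\Ext^{1}(\Ob(\mcI),\Ob(\mcI))$ has no reason to vanish. The paper sidesteps this entirely by appealing to \cite{IY}, and that citation is doing real work your direct construction does not replace.
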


\begin{proof}
By Theorem \ref{T: bij com in F}, the corresponding ideal torsion pair $(\pi(\mcI), \pi(\Sigma(\mcJ)))$ is complete in the stable category $\underline{\mcA}$. Since both $\mcI$ and $\mcJ$ are object ideals, $\pi(\mcI)$ and $\pi(\Sigma(\mcJ))$ are also object ideals. It follows that the pair $(\Ob(\pi(\mcI)), \Ob(\pi(\Sigma(\mcJ))))$  in $\underline{\mcA}$ is a torsion theory in the sense of \cite[Definition 2.2]{IY}. According to  \cite[Proposition 3.3]{SS}, the corresponding cotorsion pair $(\Ob(\mcI), \Ob(\mcJ))$ in $(\mcA;\mcE)$ is complete.
\end{proof}

\bigskip



\begin{thebibliography}{99}

\bibitem{AS0} J. Asadollahi and S. Sadeghi, Higher ideal approximation theory, {\em Trans. Amer. Math. Soc.} {\bf 375}(3) (2022), 2113-2145.
\bibitem{AR} M. Auslander and I. Reiten, Representation theory of artin algebras \uppercase\expandafter {\romannumeral3}: Almost split sequences, {\em Comm. Algebra} {\bf 3}(3) (1975), 239-294.
\bibitem{AS} M. Auslander and S. Smal\o, Preprojective modules over artin algebras, {\em J. Algebra} {\bf 66}(1) (1980), 61-122.
\bibitem{BG} D.J. Benson and G.Ph. Gnacadja, Phantom maps and purity in modular representation theory \uppercase\expandafter {\romannumeral1}, \emph{Fund. Math.} {\bf 161}(1-2) (1999), 37-91.
\bibitem{BM} S. Breaz and G.C. Modoi, Ideal cotorsion theories in triangulated categories, {\em J. Algebra} {\bf 567} (2021), 475-532.
\bibitem{B} T. B\"{u}hler, Exact categories, {\em Expo. Math.} {\bf 28}(1) (2010), 1-69.
\bibitem{C} J.D. Christensen, Ideals in triangulated categories: phantoms, ghosts and skeleta, {\em Adv. Math.} {\bf 136}(2) (1998), 284-339.
\bibitem{E} E.E. Enochs, Injective and flat covers, envelopes and resolvents, {\em Israel J. Math.} {\bf 39}(3) (1981), 189-209.
\bibitem{EJ} E.E. Enochs and O.M.G. Jenda, {\em Relative Homological Algebra,} De Gruyter Expositions in Mathematics {\bf 30}, W. de Gruyter, Berlin, New York, 2000.
\bibitem{FGHT} X.H. Fu, P.A. Guil Asensio, I. Herzog and B. Torrecillas, Ideal approximation theory, {\em Adv. Math.} {\bf 244} (2013), 750-790.
\bibitem{FH} X.H. Fu and I. Herzog, Powers of the phantom ideal, {\em Proc. Lond. Math. Soc. (3)} {\bf 112}(4) (2016), 714-752.
\bibitem{FHHZ} X.H. Fu, I. Herzog, J.S. Hu and H.Y. Zhu, Lattice theoretic properties of approximating ideals, {\em J. Pure Appl. Algebra} {\bf 226}(7) (2022), 106986.
\bibitem{GT} R. G\"{o}bel and J. Trlifaj, {\em Approximations and Endomorphism Algebras of Modules,} De Gruyter Expositions in Mathematics {\bf 41}, W. de Gruyter, Berlin, New York, 2006.
\bibitem{Ha} D. Happel, {\em Triangulated Categories in the Representation Theory of Finite Dimensional Algebras}, London Math. Soc. Lecture Note Ser. {\bf 119}, Cambridge Univ. Press, Cambridge, 1988.
\bibitem{IY} O. Iyama and Y. Yoshino, Mutation in triangulated categories and rigid Cohen-Macaulay modules, {\em Invent. Math.} {\bf 172}(1) (2008), 117-168.
\bibitem{NP} H. Nakaoka, Y. Palu, Extriangulated categories, Hovey twin cotorsion pairs and model structures, {\em Cah. Topol. G\'{e}om. Diff\'{e}r. Cat\'{e}g}. {\bf 60}(2) (2019), 117-193.
\bibitem{SL} L. Salce, Cotorsion theories for abelian groups, {\em Symposia Mathematica} \uppercase\expandafter {\romannumeral23} (1979), 11-32.
\bibitem{SS} M. Saor\'{i}n and J. \v{S}\v{t}ov\'{i}\v{c}ek, On exact categories and applications to triangulated adjoints and model structures, {\em Adv. Math.} {\bf 228}(2) (2011), 968-1007.
\bibitem{Sch} K. Schlegel, Ideal Torsion Pairs for Artin Algebras, preprint, May 14, 2024.
\bibitem{S} B. Stenstr\"{o}m, {\em Rings of Quotients: An Introduction to Methods of Ring Theory}, Grundlehren Math. Wiss. {\bf 217}, Springer-Verlag, Berlin, 1975.


\end{thebibliography}
\end{document}